\newtheorem{theorem}{Theorem}[section]
\newtheorem{lemma}[theorem]{Lemma}
\newtheorem{cor}[theorem]{Corollary}
\newtheorem{prop}[theorem]{Proposition}
\newtheorem{defn}[theorem]{Definition}
\newtheorem{ques}[theorem]{Question}
\newtheorem{exam}[theorem]{Example}
\newtheorem{rem}[theorem]{Remark}
\newtheorem{crit}[theorem]{Criterion}
\newenvironment{proof}[1][Proof]{\textbf{#1.} }
{\hfill\rule{0.5em}{0.5em}\medskip}
\newenvironment{proof*}[1][Proof]{\textbf{#1.} }{}
\begin{document}

\title{Foliations on non-metrisable manifolds: \\ absorption by a
Cantor black hole}

\author{Mathieu Baillif, Alexandre Gabard and David Gauld\footnote{Supported by the Marsden Fund Council from Government funding, administered by the Royal Society of New Zealand.}
}

\maketitle

\vskip 0.5cm

\smallskip
\newbox\abstract
\setbox\abstract\vtop{\hsize 11cm \noindent

\footnotesize \noindent\textsc{Abstract.} We investigate contrasting behaviours emerging when studying foliations on non-metrisable manifolds. It is shown that Kneser's pathology of a manifold foliated by a single leaf cannot occur with foliations of dimension-one. On the other hand, there are open surfaces admitting no foliations. This is derived from a qualitative study of foliations defined on the long tube $\mathbb S^1\times {\mathbb L}_+$ (product of the circle with the long ray), which is reminiscent of a `black hole', in as much as the leaves of such a foliation are strongly inclined to fall into the hole in a purely vertical way. More generally the same qualitative behaviour occurs for dimension-one foliations on $M \times {\mathbb L}_+$, provided that the manifold $M$ is ``sufficiently small'', a technical condition satisfied by all metrisable manifolds. We also analyse the structure of foliations on the other of the two simplest long pipes of Nyikos, the punctured long plane. We are able to conclude that the long plane $\mathbb L^2$ has only two foliations up to homeomorphism and six up to isotopy.}

\centerline{\hbox{\copy\abstract}}

\bigskip

2000 {\it Mathematics Subject Classification.} {\rm 57N99,
57R30, 37E35.}

{\it Key words.}
{\rm Non-metrisable manifolds, Long pipes, Foliations.
}
\bigskip
\normalsize

\section{Introduction}\label{sec1}

All of our manifolds are assumed to be non-empty, connected, Hausdorff
spaces in which each point has a neighbourhood homeomorphic to
Euclidean space $\mathbb R^n$ of some fixed dimension $n$. We note in passing that there are many conditions equivalent to metrisability of a manifold, including paracompactness, Lindel\"ofness and second countability. 

We recall that there are four manifolds of dimension $1$: the
circle $\mathbb S^1$, the real line $\mathbb R$, the \emph{long
ray} $\mathbb L_+$ and the \emph{long line} $\mathbb L$ (apparently
this classification was first worked out by Hellmuth Kneser
\cite{HellmuthKneser58}). The spaces $\mathbb L_+$ and
$\mathbb L$ are, respectively, the interior and double of the
\emph{closed long ray}, denoted by $\mathbb L_{\ge0}$ and constructed
through `continuous interpolation' of the first uncountable
ordinal $\omega_1$, that is to say as $\omega_1 \times [0,1)$
topologised by the lexicographical order. The idea originates
with Cantor \cite[p.\,552]{Cantor}, reappears in an
unpublished `Nachlass' of Hausdorff \cite[pp.\,317--318]{Hausdorff15}, Vietoris
\cite[pp.\,183--184]{Vietoris}, Alexandroff \cite[footnote
p.\,295]{Alexandroff} and the Knesers \cite{KneserKneser60},
Spivak \cite{Spivak} or Nyikos \cite{Nyikos}.

While compact, indeed metrisable, 2-manifolds have been classified, there is little hope of classifying the non-metrisable 2-manifolds. However, there is the Bagpipe Theorem of Nyikos \cite[Theorem 5.14]{Nyikos} which states that every $\omega$-bounded 2-manifold is obtained from a closed surface by removing finitely many disjoint discs and replacing them by long pipes. Following Nyikos we define \emph{$\omega$-bounded} to mean that every countable subset has compact closure and a \emph{long pipe} to be the union of a chain $\langle U_\alpha\ :\ \alpha<\omega_1\rangle$ of open subspaces each homeomorphic to $\mathbb S^1\times\mathbb R$ such that $\overline{U_\alpha}\subset U_\beta$ and that the frontier of $U_\alpha$ in $U_\beta$ is homeomorphic to $\mathbb S^1$ when $\alpha<\beta$.

The literature contains a range of definitions of a foliation, especially on a metrisable manifold. When it comes to non-metrisable manifolds one needs to be more careful, particularly, in view of Kneser's example of a non-trivial foliation with a single leaf, one must avoid definitions involving partitions. We will adopt the following definition, which goes back to Reeb's thesis \cite{Reeb} and is quite close to that of Milnor \cite{Milnor70}, where some of the issues arising in the non-metrisable case are discussed.

\begin{defn}\label{define foliation}
A \emph{foliation} $\mathcal F$ on a manifold $M^n$ is a maximal
atlas $\{(U_i,\varphi_i):i\in I\}$ on $M$ such that for
each $i,j\in I$ the coordinate transformations
$\varphi_j\varphi_i^{-1}:\varphi_i(U_i\cap
U_j)\to\varphi_j(U_i\cap U_j)$ are of the form
$$\varphi_j\varphi_i^{-1}(x,y)=\Big(g_{i,j}(x,y),h_{i,j}(y)\Big)$$
for all $(x,y)\in\mathbb R^{p}\times\mathbb R^{n-p}$, where
$g_{i,j}:\varphi_i(U_i\cap U_j)\to\mathbb R^{p}$ and $h_{i,j}$ is an embedding from a relevant open subset of $\mathbb R^{n-p}$ to $\mathbb R^{n-p}$. Call such a chart a \emph{foliated chart}.
Components of sets of the form $\varphi_i^{-1}(\mathbb R^{p}\times\{y\})$
are called \emph{plaques}.
The latter constitute the basis for a new topology on $M^n$
(known as the {\em leaf topology}) whose path components are
injectively immersed $p$-manifolds called the
\emph{leaves} of $\mathcal F$. The number $p$ is the
\emph{dimension} of $\mathcal F$ while $n-p$ is the \emph{codimension}.
\end{defn}

Our primary goal is to study foliations on non-metrisable manifolds. As we shall see, the shift to non-metrisable foliated manifolds is ``two-fold'' producing both regularities and anomalies. By the former we mean that sometimes foliation theory on certain (non-metrisable) manifolds turns out to collapse to a very rigid art form, obeying some crystallographic patterns, hardly expectable from the plasticity observed in the metrisable realm. On the other hand, some curious phenomena can happen when metrisability is dropped, including a codimension-one foliation on a non-metrisable $3$-manifold possessing only a single leaf. This was first pointed out by Martin Kneser \cite{MartinKneser}\footnote{Actually Martin Kneser presented his example as a dimension raising continuous bijective map from a surface to a $3$-manifold, a bit in the spirit of Peano's curve, and his father Hellmuth interpreted the example in terms of foliations (see
\cite{H.Kneser62}).}, mentioned in Haefliger \cite{Haefliger62} and popularised by Milnor~\cite{Milnor70}. This contrasts with the metrisable case, where the set of leaves is at least uncountable; indeed has exactly the power of the continuum. (This follows from the fact that each leaf endowed with its (fine) leaf topology is still second countable, see \cite{Haefliger55}.) One may then wonder if Kneser's pathology already occurs on surfaces. A negative answer is included in:

\begin{theorem}\label{Many Leaves}
A dimension-one foliation on a (not necessarily metrisable) manifold of dimension at least $2$ has exactly $\mathfrak{c}={\rm card}({\mathbb R})$ many leaves.
\end{theorem}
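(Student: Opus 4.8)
The plan is to produce a continuum's worth of distinct leaves from a single leaf, by "sliding" a plaque transversally within a fixed foliated chart and showing that infinitely many of the resulting plaques belong to distinct leaves. Concretely, fix a foliated chart $(U,\varphi)$ with $\varphi(U)=\mathbb R^p\times\mathbb R^{n-p}$, $n-p=n-1\ge 1$, and consider the family of plaques $P_y=\varphi^{-1}(\mathbb R^p\times\{y\})$ as $y$ ranges over $\mathbb R^{n-1}$; this is already a continuum of plaques. The issue is that many of these plaques could a priori lie on the same leaf (as in Kneser's example). So the first step is to set up a counting/cardinality argument: the leaf through a given plaque $P_y$ is a path component in the leaf topology, hence is a countable union of plaques (a leaf is a connected second-countable manifold in its own intrinsic topology only in the metrisable case — but here I must be careful, since that is exactly what fails). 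Instead I would argue: a leaf $L$, with its leaf (manifold) topology, is a connected $1$-manifold, hence homeomorphic to one of $\mathbb S^1$, $\mathbb R$, $\mathbb L_+$, $\mathbb L$ by the classification quoted in the introduction.

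The key point then is a Lindelöf/separability-type estimate for how many plaques of the chart $U$ a single leaf can meet. For dimension-one leaves this is the crucial simplification: each of the four $1$-manifolds, though $\mathbb L_+$ and $\mathbb L$ are non-metrisable, still has the property that any family of disjoint open arcs covering it is countable — equivalently, any subset of a $1$-manifold that is a union of countably-many-per-compact-set pieces is itself "small". More usefully: within the chart $U$, the plaques of a fixed leaf $L$ meeting $U$ form an open cover of $L\cap U$ by disjoint open subsets of $L$ in the leaf topology; since each of $\mathbb S^1,\mathbb R,\mathbb L_+,\mathbb L$ is either metrisable or a long ray/line, and in all cases a disjoint family of nonempty open sets is at most countable (this fails for $\mathbb L_+$ only for families that are not "interval-like", but a disjoint open family in a $1$-manifold consists of open arcs, and $\mathbb L_+$ has only countably many disjoint open arcs? — no: $\mathbb L_+$ does have uncountable disjoint families of open arcs). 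Here is where I would refine: rather than counting plaques globally, I localise. Restrict attention to $\varphi^{-1}(B\times \mathbb R^{n-1})$ for a fixed open ball $B\subset\mathbb R^p$; the "sub-plaques" $\varphi^{-1}(B\times\{y\})$ are now relatively compact arcs. The set of $y$ such that $\varphi^{-1}(B\times\{y\})$ lies on leaf $L$ is what we want to bound. Using a second foliated chart overlapping $U$, the holonomy transition $h_{i,j}$ is an embedding of $\mathbb R^{n-1}$, so "following" a leaf from one sub-plaque to an adjacent one moves $y$ by a homeomorphism; a leaf meeting $U$ in sub-plaques over a set $Y_L\subset\mathbb R^{n-1}$ thus has $Y_L$ meeting each small $(n-1)$-ball in a set that is a countable union of points obtained by iterating holonomy along a connected $1$-manifold of plaques.

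So the main argument I envision is: (1) pick a foliated chart $(U,\varphi)$, and inside it a fixed relatively compact "vertical" segment's worth of plaques, giving an embedded transversal $T\cong \mathbb R^{n-1}$ (or an open ball in it) that meets each leaf it touches; (2) show each leaf $L$ meets $T$ in a countable set — this follows because the trace of $L$ on $T$ is a countable set, being indexed by the plaques of $L$ lying in $U$, and a dimension-one leaf, being one of the four standard $1$-manifolds, contains only countably many disjoint plaques-worth of a fixed relatively-compact chart: a relatively compact open arc in $\mathbb S^1,\mathbb R,\mathbb L_+$ or $\mathbb L$ has compact closure meeting only countably many... — and here the honest statement is that a $1$-manifold is hereditarily Lindelöf on each of its relatively compact subsets, i.e. $\sigma$-compact subsets are Lindelöf, so a disjoint open family covering a $\sigma$-compact subset is countable, and $L\cap U$ is $\sigma$-compact since... actually $L\cap U$ need not be $\sigma$-compact; but $L\cap U$ is an open subset of the $1$-manifold $L$, hence is itself a (possibly uncountable disjoint union of) $1$-manifold(s), and what we really need is just that the plaques of $L$ in $U$ through a fixed transversal point are "few"; (3) conclude: $T$ has cardinality $\mathfrak c$, it is covered by the traces $L\cap T$ over all leaves $L$ meeting $T$, each trace is countable, hence there are at least $\mathfrak c/\aleph_0=\mathfrak c$ leaves; the reverse inequality $\le\mathfrak c$ is automatic since each leaf is determined by (e.g.) any one of its points and $M$, being a manifold, has at most... no — a non-metrisable manifold can have cardinality $>\mathfrak c$; so for the upper bound I instead note that a foliation has an atlas and each leaf is a union of plaques from a maximal atlas, but more simply each leaf through a fixed transversal $T$ is determined by its trace on $T$, a countable subset of $T\cong\mathbb R^{n-1}$, and there are only $\mathfrak c^{\aleph_0}=\mathfrak c$ such subsets — provided $T$ meets every leaf, which I would arrange by covering $M$ by countably... (no, $M$ need not be Lindelöf).

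The honest upper-bound argument: the number of leaves is at most the number of plaques in a fixed maximal atlas, but a cleaner route is to observe that any two points on the same leaf are joined by a plaque-chain, so the leaf is determined by a germ; to get $\le\mathfrak c$ I would use that each leaf, with its leaf topology, is a separable $1$-manifold ($\mathbb S^1$ or $\mathbb R$) OR a long one ($\mathbb L_+$, $\mathbb L$) — in the first case it is determined by a countable dense subset of plaques, in the second case it is still determined by a single plaque together with the transition data, and in all cases the set of leaves injects into (plaques)$/\sim$; then bound the number of plaques meeting a fixed leaf-representative... This upper bound is routine but fiddly; I expect the \textbf{main obstacle} to be precisely the lower bound's lemma that a single dimension-one leaf cannot absorb $\mathfrak c$-many of the plaques of a fixed chart — i.e. ruling out the analogue of Kneser's construction in codimension $n-1\ge 1$ with one-dimensional leaves. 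The crux is: in Kneser's $3$-manifold example the single leaf is a metrisable surface that wraps around to hit every plaque of every chart; the dimension of the leaf ($2$) gives it enough "room". With a one-dimensional leaf there is simply not enough room: following the leaf, each step of holonomy is an embedding of $\mathbb R^{n-1}\supset\mathbb R^1$-worth of transversal data, and a $1$-manifold of plaques, being a countable chain (locally) of plaques, can only visit countably many transversal points of a fixed small transversal. I would make this precise by a "plaque graph" argument: build the graph whose vertices are plaques of a fixed locally-finite-enough cover and whose edges record plaque intersections; a leaf is a connected component, which for a $1$-dimensional foliation is a countable graph because each plaque is a $1$-manifold meeting only countably many others in a fixed chart (using that a $1$-manifold, being hereditarily Lindelöf on relatively compact pieces, meets a disjoint family of open sets in a countable subfamily) — and then iterate. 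Assembling this carefully, with attention to the failure of Lindelöfness of $M$ and of the leaves, is the part demanding the most care.
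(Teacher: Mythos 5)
Your overall strategy is the same as the paper's: fix one foliated chart whose plaques are indexed by $\mathbb R^{n-1}$, and reduce the whole theorem to the claim that a single leaf can contain only countably many plaques of that fixed chart. You correctly identify this as the crux, and you correctly observe that the naive countable-chain-condition argument settles it only for leaves homeomorphic to $\mathbb S^1$ or $\mathbb R$, since $\mathbb L_+$ and $\mathbb L$ do admit uncountable disjoint families of open arcs. But you never actually prove the claim for long leaves: each of your attempts (localising to relatively compact sub-plaques, iterating holonomy, the plaque graph) is either circular --- ``a dimension-one leaf \dots contains only countably many disjoint plaques-worth of a fixed relatively-compact chart'' is exactly what has to be shown --- or false as stated. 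In particular the plaque-graph argument cannot succeed: for the product foliation on $\mathbb S^1\times\mathbb L_+$ a long leaf is covered by uncountably many plaques of any atlas (every plaque is homeomorphic to $\mathbb R$, hence is a bounded interval of the leaf, and countably many bounded intervals cannot cover $\mathbb L_+$), yet every plaque of that leaf lies at finite chain-distance from a fixed one; so some plaque meets uncountably many others, and ``countably many neighbours per chart'' does not bound the component because the atlas itself is uncountable. The missing idea is the paper's: if a leaf $L\approx\mathbb L$ (or $\mathbb L_+$) contained uncountably many plaques $P_y$ of the fixed chart, the uncountable parameter set $\{y:P_y\subset L\}$ would have a condensation point, producing a point $x\in L\cap U$ that is the limit in $M$ of a sequence $\langle x_n\rangle$ of points of $L\cap U$ none of which lies on the plaque through $x$; sequential compactness of $\mathbb L$ (or of the long side of $L-\{p\}$ closed up by $p$) lets one pass to a subsequence converging in the leaf topology to some $\widetilde x$, necessarily distinct from $x$, and since the leaf topology refines that of $M$ the subsequence converges in $M$ to both $x$ and $\widetilde x$, contradicting Hausdorffness. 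Nothing in your proposal plays the role of this step, and it is the whole content of the theorem.

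There is also a genuine error on the easy half, the upper bound $\lambda\le\mathfrak c$: you assert that a non-metrisable manifold can have cardinality greater than $\mathfrak c$, and this sends you into the inconclusive ``leaf determined by its trace or germ'' discussion. In fact every connected Hausdorff manifold has cardinality exactly $\mathfrak c$ (see \cite[Theorem 2.9]{Nyikos} or \cite{Spivak}), so the tautological surjection $M\to{\mathcal L}({\mathcal F})$ already gives $\lambda\le\mathfrak c$ with no further work.
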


We shall present a visual approach to Kneser's example in Section \ref{Visualising Kneser's example} and prove Theorem~\ref{Many Leaves} in Section \ref{sec3}.

Because of the role played by long pipes in Nyikos's Bagpipe Theorem we spend some time looking at foliations on long pipes. We firstly require some basic results which we gather in Section \ref{Basic results} for later reference. 

A unifying feature observed in the long pipes and their generalisations which we consider is a product structure in which one factor is a metrisable manifold $M$ and the other is $\mathbb L_+$. This product structure manifests itself in any foliation yielding a kind of asymptotic rigidity; we think of this as a kind of black hole behaviour. We make this more precise in Section \ref{Black holes}, where we present an analysis of dimension-one foliations on manifolds of the form $M\times\mathbb L_+$.

One of the simplest long pipes is $\mathbb S^1\times\mathbb L_+$. We prove the following and related results in Section \ref{Black hole consequences}.

\begin{theorem}\label{black hole} A dimension-one foliation
$\mathcal{F}$ on $\mathbb{S}^1 \times \mathbb{L}_+$
is confined to the following (mutually exclusive) alternatives:
\begin{itemize}
\item[{\rm (i)}] either the set $C=\{\alpha\in\mathbb L_+:\mathbb
S^1\times\{\alpha\} \mbox{ is a leaf of } \mathcal F\}$ is a closed unbounded
subset of $\mathbb L_+$, or

\item[{\rm (ii)}] the foliation is ultimately vertical, i.e. there
is an ordinal $\alpha\in\omega_1$ such that the restricted
foliation on $\mathbb S^1\times (\alpha,\omega_1)$ is the
trivial product foliation by long rays.
\end{itemize}
\end{theorem}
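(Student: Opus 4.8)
The plan is to first pin down the structure of $C$ on its own, and then to reduce everything to the behaviour of $\mathcal F$ near the top of the tube.

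\textbf{Step 1 ($C$ is closed; (i) and (ii) exclude each other).} Suppose $\alpha$ is a limit from below of ordinals $\alpha_n\in C$, fix $p=(x_0,\alpha)$ and a foliated chart $(U,\varphi)$ about $p$. Since $U$ is open it contains a box $A\times(\beta^*,\alpha+1)$ with $\beta^*<\alpha$, hence for all large $n$ it contains the arc $A\times\{\alpha_n\}$ of the leaf $\mathbb S^1\times\{\alpha_n\}$; that arc is a union of plaques of $\mathcal F$, and the plaque through $(x_0,\alpha_n)$ lies in $\mathbb S^1\times\{\alpha_n\}$. Letting $n\to\infty$ and using continuity of $\varphi^{-1}$ and of the projection to $\mathbb L_+$, one finds that the plaque of $\mathcal F$ through $p$ is contained in $\mathbb S^1\times\{\alpha\}$; as $p$ was arbitrary, $\mathbb S^1\times\{\alpha\}$ is a union of plaques, hence (being compact and connected) a single leaf, so $\alpha\in C$. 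Thus $C$ is closed, i.e.\ alternative (i) is exactly the statement that $C$ is unbounded. The two alternatives are incompatible: if (ii) holds then, past the relevant ordinal, every $\mathbb S^1\times\{\beta\}$ meets the (vertical) foliation transversally and so is not a leaf, forcing $C$ bounded.

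\textbf{Step 2 (reduction, and the transverse case).} By Step 1 it suffices to show that $C$ bounded implies (ii). Put $\alpha_0=\sup C$; on $R=\mathbb S^1\times(\alpha_0,\omega_1)$ no circle $\mathbb S^1\times\{\beta\}$ is a leaf. The heart of the matter, dealt with in Step 3, is to prove that the \emph{tangency set} $T\subseteq R$ --- the set of points at which the leaf is tangent to the circle through it --- is bounded. Granting that, choose $\alpha_1$ with $T\subseteq\mathbb S^1\times(\alpha_0,\alpha_1]$; then $\mathcal F$ is transverse to every fibre of the projection $\mathbb S^1\times(\alpha_1,\omega_1)\to(\alpha_1,\omega_1)$. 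The fibre $\mathbb S^1$ is compact and the base is homeomorphic to $\mathbb L_+$, hence simply connected, so a standard transport (Ehresmann) argument identifies the leaves with the sections of this bundle: for $\beta>\alpha_1$ there is $h_\beta\in\mathrm{Homeo}(\mathbb S^1)$ carrying $x$ to the point at level $\beta$ of the leaf through $(x,\alpha_1{+}1)$, with $h_{\alpha_1+1}=\mathrm{id}$, and $\beta\mapsto h_\beta$ is continuous for the uniform topology. Since $\mathrm{Homeo}(\mathbb S^1)$ is metrisable and every continuous map from $\mathbb L_+$ to a metrisable space is eventually constant, $h_\beta$ is constant for $\beta$ beyond some $\alpha_2$; above level $\alpha_2$ each leaf is then a vertical ray $\{h_{\alpha_2}(x)\}\times(\alpha_2,\omega_1)$, and these exhaust $\mathbb S^1\times(\alpha_2,\omega_1)$ --- that is (ii).

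\textbf{Step 3 (the main obstacle).} What remains --- and what I expect to be the real difficulty --- is to show that $C$ bounded forces $T$ bounded, i.e.\ to rule out a ``persistently sheared'' foliation with tangencies cofinal in $\mathbb L_+$ but no circle leaf. Here I would invoke the general analysis of dimension-one foliations on $M\times\mathbb L_+$ from Section~\ref{Black holes}, specialised to $M=\mathbb S^1$, together with two stabilisation facts. First, a bounded leaf is Lindel\"of, so it lies in some $\mathbb S^1\times[0,\alpha]$ and is homeomorphic to $\mathbb S^1$ or $\mathbb R$; a non-horizontal circle leaf has a highest point, which is a tangency, so such leaves are exactly the natural source of high tangencies and, once $T$ is known bounded, cannot climb arbitrarily high. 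Second, an unbounded leaf carries a long end on which the projection to $\mathbb S^1$ is a continuous map out of a copy of $\mathbb L_+$, hence eventually constant, so every unbounded leaf is eventually a vertical ray. The plan is to combine these with a pressing-down argument over $\omega_1$ and the classical description of foliations of the metrisable annulus (the ``Basic results'') applied to the bounded subannuli $\mathbb S^1\times(\gamma,\gamma')$, the point being that $\mathbb L_+$ is too rigid --- it carries no nowhere-vanishing vector field, and no continuous invariant extracted from a bounded subannulus can vary all the way up $\mathbb L_+$ --- to let a leaf, or the line field of $\mathcal F$, wind indefinitely as it ascends. Making this ``no indefinite winding'' precise is the step I would budget the most effort for; the rest amounts to routine manipulation of plaques and of the stabilisation lemmas above.
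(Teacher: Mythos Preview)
Your Step~3 is where the argument breaks down, and the paper's proof shows that this ``main obstacle'' is entirely avoidable by choosing a different dichotomy. Instead of splitting on whether $C$ is bounded, the paper splits on whether $\mathcal F$ has a \emph{long} (i.e.\ non-metrisable) leaf. If every leaf is short, Proposition~\ref{saturate} applied to the exhaustion $\mathbb S^1\times\mathbb L_+=\bigcup_{\alpha}\mathbb S^1\times(0,\alpha)$ gives a closed unbounded set of $\alpha$ for which $\mathbb S^1\times(0,\alpha)$ is saturated; its frontier $\mathbb S^1\times\{\alpha\}$ is then a leaf, which is exactly~(i). If some leaf is long, Theorem~\ref{fall in} (with $M=\mathbb S^1$, which is squat and separable) gives~(ii) directly. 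That is the whole proof.

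Your route can be completed, but only by collapsing back into the paper's argument. The contrapositive of Proposition~\ref{saturate} says: if $C$ is bounded then not all leaves are short, so there is a long leaf; Theorem~\ref{fall in} then gives ultimate verticality, hence in particular your tangency set $T$ is bounded. So the ``hard step'' you single out is precisely the content of Theorem~\ref{fall in} (passing from one eventually-vertical leaf to collective verticality via the Tube Lemma and the density Lemma~\ref{density}), and once you have it your holonomy argument in Step~2 is redundant. The Ehresmann/holonomy idea is pleasant, and the eventual-constancy of $\beta\mapsto h_\beta\in\mathrm{Homeo}(\mathbb S^1)$ is correct (that space is squat by Lemma~\ref{Mathieu}), but it only becomes available after the real work is done. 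Also, one small correction: $\mathbb L_+$ certainly \emph{does} carry nowhere-vanishing vector fields; the rigidity you want is that real-valued (or metrisable-target) continuous maps out of $\mathbb L_+$ are eventually constant, not anything about its tangent bundle.
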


Picturesquely, the leaves (thought of as light rays) are inclined to fall into the black hole in a purely vertical way due to the strength of gravitation (Case (ii)), but sometimes they manage to resist the huge attraction by winding fast around it (Case (i)).

Recall that in the classical metrisable setting, the existence of foliations is well understood in codimension-one (at least if smoothness is assumed). In the open case
existence is systematic, while in the closed case there is a single obstruction, the non-vanishing of the Euler characteristic (Thurston \cite{Thurston76}).

The existence of a codimension-one foliation on any smooth metrisable open manifold $M$ reduces to the existence of a differentiable function $f\colon M \to {\mathbb R}$ without critical points, therefore a submersion whose level hypersurfaces generate the desired foliation. Such a function $f$ is obtainable by eliminating the (isolated) critical
points of a Morse function; either by using Whitehead's spines as in Hirsch \cite[Theorem 4.8]{Hirsch61}, or by pushing them to infinity along arcs, as in Godbillon \cite[p.\,9]{Godbillon91}.

In some sense the latter method is better, since avoiding triangulations, it allows a clearcut extension to topological manifolds:

\begin{theorem}\label{open metrisable}
Any open metrisable topological manifold carries a
codimension-one foliation, which is definable by a single
global real valued topological submersion.
\end{theorem}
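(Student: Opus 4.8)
The plan is to transplant the argument recalled above into the topological category, using Godbillon's ``push the critical points to infinity'' method rather than Hirsch's Whitehead-spine argument, precisely because the former is a deformation and uses no triangulation. The key observation is that once a \emph{topological submersion} $f\colon M\to\mathbb R$ has been produced, the foliation comes for free: by definition of a topological submersion, each point of $M$ lies in a chart, a homeomorphism $\varphi\colon U\to\mathbb R^{n-1}\times\mathbb R$ with $f\circ\varphi^{-1}=\mathrm{pr}_2$; any two such charts have a coordinate transformation that preserves the last coordinate on the nose, hence is of the shape $(x,y)\mapsto(g(x,y),y)$, which is a foliated chart in the sense of Definition~\ref{define foliation} with $p=n-1$. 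The maximal atlas these charts generate is a codimension-one foliation of $M$, ``defined by the single global real valued submersion $f$''. Thus everything reduces to constructing $f$, and we may assume $n\ge2$, the case $n=1$ being trivial since then $M\cong\mathbb R$.

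First I would put on $M$ a proper \emph{topological Morse function} $f_0\colon M\to[0,\infty)$ in the sense of M.~Morse: a function which near every point is topologically conjugate to a non-degenerate quadratic form (of rank $n$ at a regular point). Such functions exist on an open topological manifold --- one triangulates in dimension $\le3$, uses the Kirby--Siebenmann handle decompositions in dimension $\ge5$, and invokes the smoothability of non-compact $4$-manifolds in dimension $4$ --- and since $M$ is $\sigma$-compact one may take $f_0$ proper. The defining normal forms force the critical set $\Sigma$ of $f_0$ to be a closed discrete, hence countable, subset $\{p_1,p_2,\dots\}$.

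The heart of the matter is then the topological incarnation of Godbillon's elimination. Since $\Sigma$ is discrete, $M$ is non-compact and $n\ge2$, a general-position argument provides proper embeddings $\gamma_k\colon[0,\infty)\hookrightarrow M$ with $\gamma_k(0)=p_k$, escaping to infinity, missing $\Sigma\setminus\{p_k\}$, and pairwise disjoint; after shrinking they carry pairwise disjoint, locally finite tubular neighbourhoods $N_k\cong[0,\infty)\times\mathbb R^{n-1}$ on whose frontiers $f_0$ is regular. On each $N_k$ one performs Godbillon's local surgery: leaving $f_0$ untouched outside $N_k$ and near $\partial N_k$, one replaces it inside by a function \emph{without critical points} --- possible precisely because the lone ``handle'' recorded by $p_k$ can be slid out through the non-compact end $\gamma_k(\infty)$. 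Performing all these disjoint modifications at once yields a proper function $f\colon M\to[0,\infty)$, still topologically Morse but now with empty critical set; and a topological Morse function with no critical points is, by its local normal form, a topological submersion. This $f$ finishes the proof.

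I expect the genuine obstacle to be exactly this last step: writing down and verifying the topological analogue of the ``slide a critical point out along a ray'' move --- the explicit local model on $[0,\infty)\times\mathbb R^{n-1}$ interpolating between $f_0|_{\partial N_k}$ and a linear projection, together with the bookkeeping ensuring that the amalgamated $f$ creates no new critical points and stays proper. (This is where ``avoiding triangulations'' pays off, the move being a bona fide isotopy rather than a combinatorial collapse.) A minor extra point is securing a \emph{topological} Morse function in dimension $4$, dealt with above by smoothing. Alternatively one can skip Morse theory altogether: being open, $M$ has the homotopy type of a complex of dimension $<n$, so the Euler class of its topological tangent bundle, which lives in $H^n(M;\mathbb Z_{w_1})$, vanishes; the resulting nowhere-zero section of $\tau_M$ is, after the standard microbundle manipulations, the formal datum that the topological submersion theorem (Phillips, Gauld) integrates to an honest submersion $M\to\mathbb R$.
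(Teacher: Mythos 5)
Your main argument is essentially the paper's proof: handle decompositions (Kirby--Siebenmann for $n\ge 5$, triangulations/smoothings in low dimensions) read as topological Morse functions, Quinn's smoothability of open metrisable $4$-manifolds to cover the exceptional dimension, and Godbillon's device of pushing the isolated critical points to infinity along proper rays --- and you honestly flag the same step (the explicit local model for sliding a critical point out through an end) that the paper also leaves to the reader. Your closing alternative, via the vanishing of the Euler class of the tangent microbundle on an open manifold together with the Phillips--Gauld topological submersion theorem, is a legitimate and arguably cleaner second route, but it is not the one the paper takes.
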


\begin{proof}
Recall first that metrisable topological manifolds of dimension $n \neq
4$ admit handlebody decompositions: for $n\ge 6$ see
Kirby-Siebenmann \cite[p.\,104]{KirbySiebenmann}; while the case $n=5$ is settled by Quinn in \cite{Quinn82} (see also
\cite[p.\,135-6]{FreedmanQuinn}). Such a decomposition is the
same as a topological Morse function, which can then be
improved to a submersion by the trick above of pushing
`troubles' to $\infty$. The remaining case $n=4$ cannot be so handled due to the disruption of
handlebody theory (see Siebenmann \cite{Siebenmann70}, updated
by Freedman \cite{Freedman82}, to locate $4$ as a disrupting
dimension). However one can get around the disruption by quoting another result of Quinn, namely the smoothability of
open metrisable 4-manifolds (cf. \cite[p.\,116]{FreedmanQuinn}).
\end{proof}

The hope encouraged by Theorem \ref{open metrisable} that non-metrisable manifolds might all admit foliations is not borne out. We will introduce a class of surfaces $\Lambda_{g,n}$ obtained from the compact surface of genus $g$ as in the Bagpipe Theorem where there are $n$ long pipes, all homeomorphic to $\mathbb S^1\times\mathbb L_+$, (see Figure~9 below). As a corollary to Theorem \ref{black hole} we will show the following in Section \ref{Black hole consequences}.

\begin{cor}\label{non-foliable surfaces} 
None of the surfaces ${\Lambda}_{g,n}$ admit
foliations except for ${\Lambda}_{0,2}$ the sphere with two black holes (homeomorphic to the `doubly' long cylinder $\mathbb S^1 \times {\mathbb L}$) and ${\Lambda}_{1,0}$ the torus without any black hole.
\end{cor}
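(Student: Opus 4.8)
The plan is to reduce the non-existence of a foliation on $\Lambda_{g,n}$ (outside the two exceptional cases) to a combination of Theorem~\ref{black hole} applied near each long pipe and the classical Euler characteristic obstruction for the compact core. First I would fix a putative dimension-one foliation $\mathcal F$ on $\Lambda_{g,n}$ and examine its restriction to each of the $n$ long pipes $P_k\cong \mathbb S^1\times\mathbb L_+$. By Theorem~\ref{black hole}, on each $P_k$ the foliation is either of the closed-unbounded type (i) or ultimately vertical (ii). The key observation is that in case (i) the restriction $\mathcal F|_{P_k}$ has a closed unbounded set of circle leaves $\mathbb S^1\times\{\alpha\}$, and I would argue that the union of these leaves, together with what $\mathcal F$ does between them, forces a compact invariant region whose boundary is a circle leaf; cutting along such a leaf $\mathbb S^1\times\{\alpha\}$ for $\alpha$ large (in $C$) shows that we may as well assume every long pipe is ultimately vertical, i.e.\ of type (ii), after replacing $\Lambda_{g,n}$ by the compact surface $S$ obtained by capping each $P_k$ at a level beyond which it is the trivial product foliation by long rays.

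Next I would analyze the glued-up picture: if every pipe is ultimately vertical, then near the end of $P_k$ the leaves are the long rays $\{x\}\times(\alpha_k,\omega_1)$, so in particular at the capping circle $\mathbb S^1\times\{\alpha_k\}$ the foliation $\mathcal F$ is everywhere transverse to that circle (the leaves cross it). Removing the ends of the pipes and capping with discs, one does not quite get a foliation on the compact surface $S$ of genus $g$ with $n$ discs glued in, but one does get a foliation on $S$ with $n$ boundary circles each transverse to $\mathcal F$; equivalently, gluing a foliated disc or a foliated Möbius-type cap is obstructed unless the index works out. The cleanest route is: a circle transverse to a dimension-one foliation on a compact surface, when capped by a disc, forces a singularity of index $+1$ inside the disc (a center or focus), by the Poincaré–Hopf / index argument for line fields. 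Hence a genuine (nonsingular) foliation on the capped compact surface $S$ would need each of the $n$ caps to carry total index $0$, which is impossible for a disc cap, unless $n=0$; and if $n=0$ then $\Lambda_{g,0}$ is already a closed surface, which admits a nonsingular line field (hence a foliation, orientation issues aside) only when $\chi=2-2g=0$, i.e.\ $g=1$, giving the torus $\Lambda_{1,0}$.

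That leaves the case where not every pipe is ultimately vertical, i.e.\ some pipe $P_k$ is of type (i). Here I would use the closed unbounded circle-leaf set $C_k$: pick $\alpha\in C_k$; the circle leaf $\mathbb S^1\times\{\alpha\}$ separates $\Lambda_{g,n}$ into a piece containing the black hole end of $P_k$ and a piece $R$ containing the compact core and the other pipe ends. The end piece is foliated with $\mathbb S^1\times\{\alpha\}$ as a boundary leaf, which is fine; but on $R$, the boundary circle $\mathbb S^1\times\{\alpha\}$ is now a closed leaf of $\mathcal F|_R$, so after capping we again land in the index computation, a closed leaf bounding a disc also forcing nonzero index inside. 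Doing this for every pipe of type (i) and combining with the capping-disc argument for the type (ii) pipes reduces everything to: a nonsingular dimension-one foliation on a compact surface obtained by capping $n'\le n$ boundary circles of genus-$g$ surface, which exists iff the Euler characteristic vanishes and (for a true foliation rather than a line field) an orientability/tangent-field condition holds; running the arithmetic $\chi=2-2g-(\text{number of pipes not capped})$ pins the survivors down to exactly $\Lambda_{1,0}$ and $\Lambda_{0,2}$, the latter because two pipes glued end-to-end with no core is $\mathbb S^1\times\mathbb L$, which obviously carries the trivial vertical foliation.

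The main obstacle I expect is the first reduction step, namely handling type-(i) pipes rigorously: Theorem~\ref{black hole} tells us the circle-leaf set $C_k$ is closed unbounded, but it does not by itself tell us the behaviour of $\mathcal F$ \emph{between} consecutive elements of $C_k$, and I need to be sure that cutting along a circle leaf genuinely disconnects the surface and leaves a well-defined foliated compact-with-boundary piece with the circle as an honest leaf (so that the index argument applies). This likely requires a short separation/collar lemma — probably drawn from the ``Basic results'' of Section~\ref{Basic results} — establishing that a compact leaf in a surface foliation has a foliated product neighbourhood, and that an unbounded family of disjoint essential circles in $\mathbb S^1\times\mathbb L_+$ really does exhaust it. Once that collaring is in hand, the rest is the standard Euler-characteristic bookkeeping for line fields on compact surfaces and poses no real difficulty.
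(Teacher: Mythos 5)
Your overall strategy is the paper's: restrict $\mathcal F$ to each of the $n$ pipes, invoke Theorem~\ref{black hole} to produce in each pipe a circle $\mathbb S^1\times\{\alpha\}$ that is either a leaf (case (i)) or transverse to $\mathcal F$ (case (ii), where beyond some level the foliation is by vertical long rays), cut along these $n$ circles, discard the non-metrisable ends, and confront the compact core $\Sigma_{g,n}$ --- now carrying a foliation that is purely tangential or purely transverse along each boundary circle --- with the Euler-characteristic obstruction. The paper finishes by doubling: $2\Sigma_{g,n}$ is a closed foliated surface of genus $2g+(n-1)$, which by Lemma~\ref{Euler} forces $2g+(n-1)=1$, whence $(g,n)=(0,2)$ or $(1,0)$. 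Your capping route is an equivalent way to run the same count, and the collar/separation lemma you worry about at the end is not needed: cutting along a leaf, or along a circle everywhere transverse to $\mathcal F$, already yields a foliated compact surface-with-boundary with clean boundary behaviour, which is all that doubling or capping requires.

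However, your index bookkeeping is wrong as written, and in a way that matters. You claim each disc cap ``would need to carry total index $0$, which is impossible \dots unless $n=0$''; taken literally this excludes $\Lambda_{0,2}$ as well, contradicting the very statement you are proving. The correct count is: the line field tangent to $\mathcal F$ on $\Sigma_{g,n}$ extends over each capping disc with a single singularity of index $+1$ --- this is the index both when the boundary circle is a closed leaf and when it is transverse --- so Poincar\'e--Hopf for line fields on the capped closed surface $\Sigma_g$ gives $n=\chi(\Sigma_g)=2-2g$, i.e.\ $2g+n=2$, which is exactly the paper's equation. Your closing formula ``$\chi=2-2g-(\mbox{number of pipes not capped})$'' together with ``exists iff $\chi$ vanishes'' does not yield this either. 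Relatedly, the reduction in your first paragraph (``we may as well assume every long pipe is ultimately vertical'') is incorrect and unnecessary: cutting along a circle leaf does not convert a type~(i) pipe into a type~(ii) one, and nothing requires it to --- all you need from either case is one circle per pipe along which the truncated foliation is tangent or transverse.
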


A result in this direction was already mentioned in a paper of Nyikos \cite[p.\,275]{Nyikos79}, but unfortunately neither a detailed proof nor a description of the surfaces was given.

The analogue of Theorem \ref{black hole} for $\mathbb S^2
\times {\mathbb L}_{+}$ also holds but with a difference. As
the base manifold $\mathbb S^2$ has no dimension-one
foliation, this prompts the more cannibalistic behaviour that
each one-dimensional foliation is ultimately vertical. This is
the object of Corollary \ref{cannibal}.

Of course a four-dimensional black hole $\mathbb S^3\times
{\mathbb L}_+$ is not cannibalistic, because $\mathbb S^3$
admits one-dimensional foliations, e.g. the one given by the
Hopf fibration $\mathbb S^3 \to {\mathbb C} P^1$.
In fact in this case we obtain an exact analogue of Theorem
\ref{black hole}, except that in (i) the condition ``{\it $\mathbb S^1\times\{\alpha\} \mbox{ is a leaf of } \mathcal F$}'' is replaced by ``{\it  $\mathbb S^3\times
\{\alpha \}$ is foliated by $\mathcal F$}''. Here of course this
does not imply the presence of a circular leaf: remember
Schweitzer's negative solution to the ``Seifert conjecture''
\cite{Schweitzer74}.

Our understanding of higher-dimensional foliations on
long-objects like $M\times {\mathbb L}_+$ is much more
fragmentary. Again one might guess that longness imposes some kind of
rigidity in the large (say ultimately one can only observe a
product foliation or eventually a transfinite gluing of a
foliation on $M\times[0,1]$). For instance it would be
interesting to understand better codimension-one foliations on
$\mathbb S^2\times {\mathbb L}_+$. Do they always
exhibit a spherical leaf? Do they never exhibit compact leaves
of genus $\ge 2$?

\medskip

It could be also interesting to ask whether for each integer $n\ge 2$ one can find an open $n$-manifold supporting no codimension-one foliations (or even no foliation at all). A natural candidate is the $n$-dimensional ``long glass'' ${\Lambda}^n={\mathbb B}^n \cup_{\partial} (\mathbb S^{n-1} \times {\mathbb L}_{\ge 0})$, an $n$-dimensional version of the surface $\Lambda_{0,1}$ considered in Corollary~\ref{non-foliable surfaces}. The manifold $\Lambda^n$ is hard to foliate, because it is essentially impossible to foliate the $n$-ball, $\mathbb B^n$ (compare Proposition \ref{prop7.6} for a precise statement).

This, albeit rather slim evidence, leads us to put forward the following:

\smallskip
\noindent {\bf Speculation.} {\it The $n$-dimensional long glass ${\Lambda}^n$
supports no $C^0$-foliations (except the two trivial ones).
}
\smallskip

Our methods validate this for dimension-one foliations (see Corollary \ref{longglass}): giving examples in each dimension $n\ge 2$ of an open $n$-manifold without
dimension-one foliations.

Another of the simplest long pipes is the punctured long plane, $\mathbb L^2-\{{\rm pt}\}$. Our effort is concentrated on the behaviour of dimension-one foliations `towards infinity'. Interestingly, $\mathbb L^2-\{{\rm pt}\}$, or more generally $\mathbb L^2-K$ for some compactum $K$, splits naturally into pieces which, while not themselves being products of the form $M\times\mathbb L_+$, have enough of the structure of this product for us to be able to apply the results of Section \ref{Black holes}. We find that there are six different cases, described in Section~\ref{Foliating L2-pt}. Filling the holes in these six asymptotic structures enables one to deduce a complete classification of foliations on the long plane ${\mathbb L}^2$, which up to homeomorphism contains only \emph{two} representatives : the trivial foliation and the ``broken'' foliation, in which leaves switch from the vertical to the horizontal mode when they cross the diagonal. So from the foliated viewpoint the ``Cantorian'' plane ${\mathbb L}^2$ appears as an extremely ``rigid'' object, when compared to its Euclidean analogue ${\mathbb R}^2$, which in contrast allows a (huge) menagerie of foliations by the celebrated works of Kaplan~\cite{Kaplan40} and Haefliger-Reeb~\cite{HaefligerReeb57}.

So far we have studied foliations \emph{per se}, without worrying much about applications. However the general philosophy that geometric structures (in particular
foliations) aid a better understanding of the underlying manifold topology is very effective in our setting too. Concretely, foliations provide a sensitive medium to distinguish among similar looking manifolds, whose nuances remain undetected through
the eyes of classical algebro-topological invariants. Such a situation occurs when it comes to distinguish the rectangular from the rhombic quadrants (see Corollary \ref{rectangular v rhombic}).

\section{Visualising Kneser's example}\label{Visualising Kneser's example}

The example presented by Kneser in \cite{MartinKneser} is
related to the non-metrisable surface introduced by Pr\"ufer
in 1923 and first described in print by Rad\'o \cite{Rado}.
We first recall Pr\"ufer's construction in order to get an
intuitive picture of such a surface. This being done it is
then easy to visualise explicitly how the pathology of a
single leaf can come about.

We use the complex numbers ${\mathbb C}$ as model for the Euclidean plane. The idea
is to consider the set $P$ formed by the (open) upper
half-plane ${\mathbb H} = \{ z : {\rm Im }(z)>0 \}$ together
with the set of all rays emanating from points of ${\mathbb R}$
and pointing into the upper half-plane. We topologise $P$
with the usual topology for $\mathbb H$, and by
declaring as neighbourhoods of a point $r$ which is a ray (say
emanating from $x\in {\mathbb R}$) an (open) sector of rays
deviating by at most $\varepsilon$ radian from $r$, together
with the points of $\mathbb H$ lying between the two rays,
while remaining at (Euclidean) distance smaller than
$\varepsilon$ from $x$ (see Figure \ref{PruferRay}).


\begin{figure}[h]
\hspace{-1cm}
\begin{minipage}[b]{0.5\linewidth} 
\centering
\scalebox{1.0}{\includegraphics{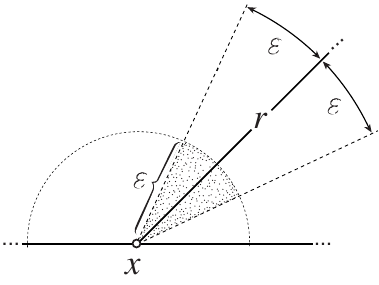}}
\caption{\label{PruferRay} A neighbourhood of a ray}
\end{minipage}
\hspace{-1cm} 
\begin{minipage}[b]{0.5\linewidth}
\centering
\scalebox{1.0}{\includegraphics{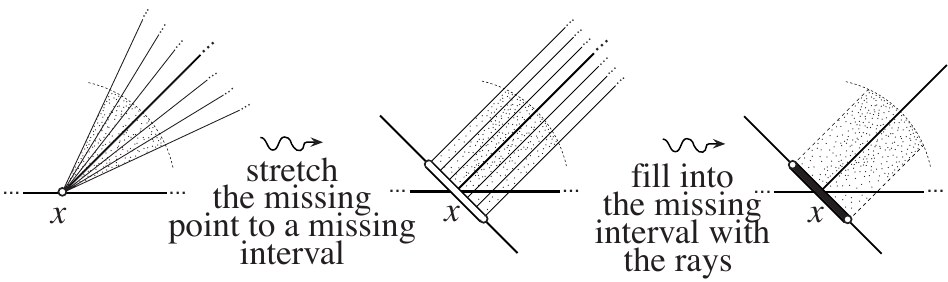}}
 \caption{\label{PruferMan} Proving that $P$ is a
surface-with-boundary}
\end{minipage}
\end{figure}

The space $P$ is a surface-with-boundary, as heuristically
explained by Figure \ref{PruferMan}. (A more careful
discussion of this point may be found e.g. in
\cite{Gabard08}.) Observe that $P$ has a continuum ${\frak c}$
of boundary components each homeomorphic to the real line
${\mathbb R}$. So one may think of $P$ as being just the
(closed) upper-half-plane,
with each boundary point blown up to a copy of the real line.

We are now ready to describe a foliated structure on a
3-manifold having just a single leaf. We first consider the
product $W^3=P \times {\mathbb R} \ni (z,t)$  which is a
3-manifold with boundary components $C_x$ indexed by the
reals, each homeomorphic to ${\mathbb R}^2$. From it we construct
a 3-manifold $M^3$ by identifying for each $x>0$ the boundary
components $C_{-x}$ and $C_{x}$ through a translation of
amplitude $x$ in the $t$-coordinate, and adding an (open)
collar to the central component $C_0$. We foliate $M^3$ by the
`vertical' ($t=$constant) plane field (see Figure \ref{OneLeaf}).

\begin{figure}[h]
\centering
\scalebox{1.1}{\includegraphics{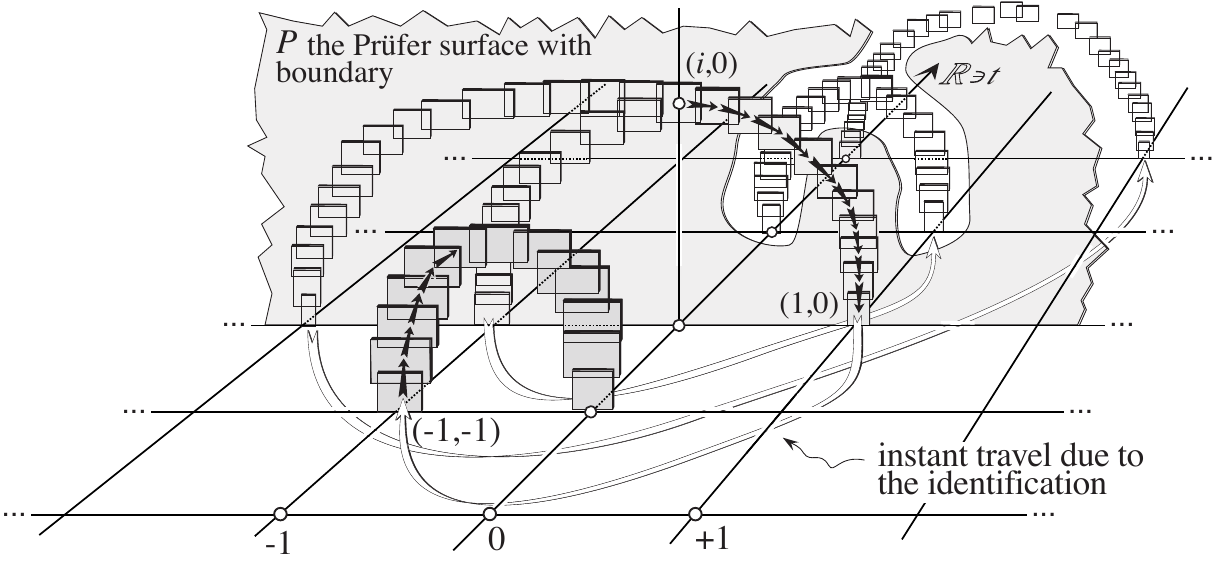}}
    \caption{\label{OneLeaf} A non-metrisable 3-manifold foliated by a single leaf}
\end{figure}

Starting say from the point $(i,0)$ ($i=\sqrt{-1}$), and travelling as indicated by the arrows on Figure~3, one crosses the (ex-)boundary at $(1,0)$ to reappear in $(-1,-1)$ (due to the
identification). So by varying the position at which one decides to cross the boundary, one can vary as one pleases the $t$-coordinate of the reappearance. Therefore one may reach any other point of the manifold $M^3$ by moving only in the restricted way prescribed by the foliation. This shows that the envisaged foliation has just a single leaf.

\begin{rem} {\rm Kneser's example shows that the concept of a foliation cannot (in general) be reduced to the single data of a partition of the manifold satisfying certain conditions however stringent they might be. So at a foundational level it is certainly quite important forcing us to work with the modern definition of a foliation as the geometric structure associated with a suitable pseudo-group. In the metrisable case the partition point of view is equivalent to Definition \ref{define foliation} in the sense that the function taking a foliation to the partition into leaves is injective. We have found no formally published reference of this fact but \cite[Lemma 7]{vdBan} does give a complete proof.}\end{rem}

\section{Foliations of dimension-one have many leaves} \label{sec3}

In this section we show that the anomaly (of Section \ref{Visualising Kneser's example}) of a single leaf filling up the whole manifold `ergodically' cannot occur if the ambient dimension is only $2$. The reason behind this well-behaviour is not specifically two-dimensional, but rather to be found in the one-dimensionality of the leaves, particularly the fact that 1-manifolds are completely classified. We prove the more general Theorem
\ref{Many Leaves}.
\medskip

{\bf Proof of Theorem \ref{Many Leaves}.} Let ${\mathcal L}({\mathcal F})$ denote the set of leaves of the dimension-one foliation ${\mathcal F}$ on the $n$-manifold $M$ and set $\lambda={\rm card} {\mathcal L}({\mathcal F})$: to show $\lambda=\mathfrak c$. 

The obvious surjection $M \to {\mathcal L}({\mathcal F})$ shows that ${\frak c}\ge\lambda$ because the cardinality of non-trivial connected, Hausdorff manifolds is always that of the continuum (see \cite[Problem 8, p.\,A-15--A-16]{Spivak} or \cite[Theorem 2.9]{Nyikos}). 

Let $\varphi\colon U \to {\mathbb R}^n$ be a foliated chart for ${\mathcal F}$ with $\varphi(U)={\mathbb R}^n$, so that $P_y=\varphi^{-1}({\mathbb R }\times \{ y \})$ with $y\in {\mathbb R}^{n-1}$ are the corresponding plaques. One has an `integration' map ${\mathcal P}:=\{P_y \}_{y \in {\mathbb R}^{n-1}} \to {\mathcal L}({\mathcal F})$ taking each plaque to its leaf extension.

It suffices to show that each leaf of $\mathcal F$ contains only countably many plaques of $\mathcal P$. Indeed, in that case one can find an injection ${\mathcal P}\hookrightarrow {\mathcal L}({\mathcal F}) \times {\mathbb N}$. Since $n\ge 2$, this gives  ${\mathfrak c}={\rm card} (\mathcal P) \le \lambda \cdot \omega$, and hence ${\mathfrak c} \le \lambda$.

Suppose for a contradiction that there is a leaf $L$ containing uncountably many plaques of $\mathcal P$. In view of the classification of $1$-manifolds the leaf $L$ is either ${\mathbb L}$ or ${\mathbb L}_{+}$, because the two separable manifolds $\mathbb S^1$ and ${\mathbb  R}$ cannot contain uncountably many pairwise disjoint open sets.

Let us first assume $L\approx{\mathbb L}$. The uncountable subset $\{ y \in
{\mathbb R}^{n-1} : P_y \subset L \}$ of ${\mathbb R}^{n-1}$ has a \emph{condensation point}, i.e. a point of the set which is the limit point of a non-stationary sequence of points of the set. Hence, one finds inside $L$ a point $x\in U$ which is the limit of a converging sequence $\langle x_n\rangle$ of points of $U$, none of which belongs to the plaque through $x$ (compare Figure \ref{ManyLeaves}).

\begin{figure}[h]
\centering
\scalebox{1.1}{\includegraphics{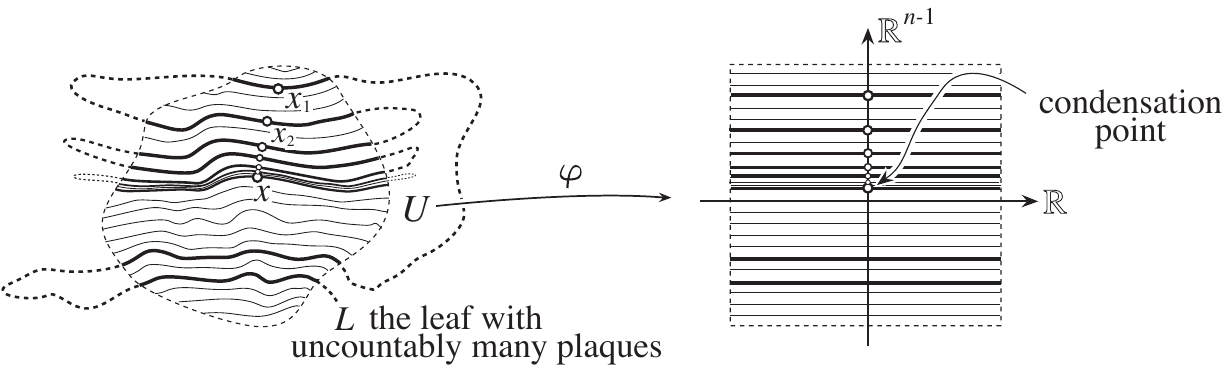}}
    \caption{\label{ManyLeaves} Finding many leaves}
\end{figure}

Since the long line ${\mathbb L}$ is sequentially compact, taking a subsequence if necessary, we may assume that $\langle x_n\rangle$ converges also in the leaf topology on $L$ (say to $\widetilde{x}$). Note that $\widetilde{x}\neq x$, because the plaque through $x$ does not contain any member of the sequence $\langle x_n\rangle$. Since the leaf topology on $M$ is a refinement of its usual topology, it follows that $\langle x_n\rangle$ converges to $\widetilde{x}$ as well in the usual topology on $M$. This contradicts the uniqueness of the limit in Hausdorff spaces.

Finally, if $L \approx {\mathbb L}_{+}$, one finds a point $p\in L$ not in any of the plaques of $U$ lying in $L$. The short side of $L- \{ p \}$ can absorb at most countably many plaques, and arguing exactly as before one can contradict the assumption that there are uncountably many plaques in the long side of $L- \{ p\}$ (think of this as closed by adding $p$, to make it sequentially compact). \hfill\rule{0.5em}{0.5em}
\medskip

The same argument shows that if all leaves of a codimension $>0$ foliation are sequentially compact then there are $\mathfrak c$ many leaves. It also shows that leaves modelled on $\mathbb L$ are embedded.

\section{Basic results} \label{Basic results}

In this section we gather some useful facts. The first is a standard property of the order topology on $\omega_1$.

\begin{crit}\label{closed in omega_1}
A subset $C\subset\omega_1$ is closed if and only if every increasing sequence in $C$ converges in $C$.
\end{crit}

\begin{defn}
Call a topological space $X$ \emph{squat} provided that every
continuous map $f:\mathbb L_+\to X$ is \emph{eventually
constant}, i.e. there are $\beta\in\mathbb L_+$ and $x\in X$
so that $f(\alpha)=x$ for each $\alpha\ge\beta$.
\end{defn}

Our first lemma generalises the well-known fact that $\mathbb R$ is squat (consult \cite[Satz 3]{KneserKneser60} or \cite[Lemma 3.4 (iii)]{Nyikos}): indeed, the lemma implies that all metrisable manifolds are squat.

\begin{lemma}\label{Mathieu}
If a space $X$ is first countable, Lindel\"of and Hausdorff then it is squat.
\end{lemma}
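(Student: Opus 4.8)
The plan is to argue by contradiction: suppose $f\colon\mathbb L_+\to X$ is continuous but not eventually constant. First I would extract from this a strictly increasing sequence $\langle\alpha_n\rangle$ in $\mathbb L_+$ together with points whose images do not all agree. More precisely, failure of eventual constancy means that for every $\beta\in\mathbb L_+$ there exist $\gamma,\delta>\beta$ with $f(\gamma)\neq f(\delta)$; iterating this and using that $\mathbb L_+$ has no greatest element, I would build a strictly increasing sequence $\alpha_0<\alpha_1<\alpha_2<\cdots$ such that $f(\alpha_{2k})\neq f(\alpha_{2k+1})$ for all $k$. Let $\alpha=\sup_n\alpha_n\in\mathbb L_+$ (this supremum exists and lies in $\mathbb L_+$ because countable subsets of $\omega_1$ are bounded), so $\alpha_n\to\alpha$ in $\mathbb L_+$ and hence $f(\alpha_n)\to f(\alpha)$ in $X$ by continuity.

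The core of the argument is to derive a contradiction from the fact that $X$ is first countable, Lindel\"of and Hausdorff. Here I would invoke the standard fact that a Lindel\"of Hausdorff space is regular, and that a regular first-countable space is \emph{collectionwise normal}, or more simply that such a space has the property that a convergent sequence together with its limit is metrisable-like enough to separate points. Actually the cleanest route: in a first-countable Hausdorff space, if a sequence $\langle x_n\rangle$ converges to $x$, then for the subsequence of even-indexed terms and odd-indexed terms we still have convergence to $x$; but $f(\alpha_{2k})$ and $f(\alpha_{2k+1})$ are distinct, and I want to manufacture from the countably many values $\{f(\alpha_n):n\in\omega\}$, which may themselves be infinite, a genuine obstruction. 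The key observation is that $\mathbb L_+$ is \emph{countably compact} (indeed sequentially compact), so the image $f(\mathbb L_+)$, being a continuous image, is countably compact; combined with Lindel\"of this forces $f(\mathbb L_+)$ to be compact, and a compact subset of a first-countable Hausdorff space is sequentially compact and — more to the point — the restriction of $f$ to the club set of points below $\alpha$ must stabilise because a continuous map from $\mathbb L_+$ into a compact metrisable space is eventually constant.

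So the decisive step, which I would isolate as the main work, is to show that $f(\mathbb L_+)$ is compact \emph{metrisable}. Compactness follows from countable compactness plus Lindel\"of (a countably compact Lindel\"of space is compact). For metrisability: a compact Hausdorff first-countable space need not be metrisable in general, but here I would use that $f(\mathbb L_+)$ is also Lindel\"of (being a continuous image of the Lindel\"of — indeed, well, $\mathbb L_+$ is \emph{not} Lindel\"of, so I must be careful) — the Lindel\"of hypothesis is on $X$ itself, hence inherited by the closed subspace $f(\mathbb L_+)$. A compact Hausdorff space that is first countable and — hmm, first countability plus compactness plus Hausdorff does give sequential compactness but not metrisability (e.g. a first-countable non-metrisable compactum). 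The honest fix: I would instead show directly that a continuous map from $\mathbb L_+$ to a first-countable Hausdorff space whose image has compact closure is eventually constant, by the classical pressing-down style argument: for each point $y$ in the countable-or-not value set, separate by open sets along a countable base, and use that $\mathbb L_+$ minus a countable set is still unbounded. I expect the technical heart, and the likeliest place to stumble, to be precisely this reduction — pinning down exactly which combination of first countability, Lindel\"ofness and Hausdorffness makes the classical ``$\mathbb R$ is squat'' proof go through, and in particular handling the case where the value set $f(\mathbb L_+)$ is itself uncountable.
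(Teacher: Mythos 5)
There is a genuine gap: you never actually produce the contradiction (or the eventual value). Your first route is a dead end as you set it up --- building $\alpha_0<\alpha_1<\cdots$ with $f(\alpha_{2k})\neq f(\alpha_{2k+1})$ and noting $f(\alpha_n)\to f(\alpha)$ contradicts nothing, since a convergent sequence may perfectly well have pairwise distinct terms. Your second route (the image is countably compact, hence closed in $X$, hence Lindel\"of, hence compact) is sound as far as it goes, but you then correctly observe that compact $+$ first countable $+$ Hausdorff does not give metrisability, and you explicitly leave the ``technical heart'' --- the reduction that would finish the proof --- as a to-do. What is missing is the single decisive idea: exhibit a point $x\in X$ whose fibre $f^{-1}(x)$ is closed and \emph{unbounded} in $\mathbb L_+$. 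Once you have such an $x$, the lemma follows in two lines from first countability and Hausdorffness: take a countable neighbourhood base $(V_n)$ at $x$ with $\bigcap_n V_n=\{x\}$; each $f^{-1}(X-V_n)$ is closed and disjoint from the club $f^{-1}(x)$, hence bounded (two closed unbounded subsets of $\mathbb L_+$ must meet), so $f^{-1}(X-\{x\})=\bigcup_n f^{-1}(X-V_n)$ is bounded and $f$ is constantly $x$ beyond that bound.

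The paper obtains such an $x$ by a sublemma about the graph of $f$: if $C\subset\mathbb L_+\times X$ is closed and has unbounded $\mathbb L_+$-projection, then some horizontal slice $C\cap(\mathbb L_+\times\{x\})$ is already unbounded. The proof of that sublemma is where Lindel\"ofness of \emph{all} of $X$ (not just of the image) and sequential compactness of the tails $[\beta,\omega_1)$ are really used: for each $x$ one finds a basic neighbourhood $V^x_{n(x)}$ such that $[\beta_x,\omega_1)\times V^x_{n(x)}$ misses $C$, extracts a countable subcover, and takes the supremum of the corresponding $\beta_x$ to bound $C$. Your compactness observation could in principle be turned into an alternative derivation of the same key point (a point lying in every tail-closure $\overline{f([\beta,\omega_1))}$ has, by first countability and sequential compactness of $\mathbb L_+$, an unbounded fibre), but as written the proposal stops precisely where the work begins.
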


\proof We will prove it using the graph of $f:{\mathbb L}_+\to X$
and thus work in ${\mathbb L}_+\times X$. The graph $\Gamma_f$ of
$f$ is closed (because $X$ is Hausdorff) and ${\mathbb L}_+$-{\it unbounded} (i.e.
its projection on the ${\mathbb L}_+$-factor is unbounded).

We shall use the:

\smallskip
\noindent {\bf Sublemma.} {\it Let $X$ be a space as above and
$C\subset {\mathbb L}_+\times X$ be closed and ${\mathbb
L}_+$-unbounded. Then there is $x\in X$ so that $C\cap({\mathbb
L}_+\times \{x\})$ is ${\mathbb L}_+$-unbounded.}
\smallskip

We apply this to $C=\Gamma_f$. Since the ${\mathbb
L}_+$-projection of $\Gamma_f\cap({\mathbb L}_+\times  \{x\})$ is
nothing but $f^{-1}(x)$, we conclude that the latter is a closed unbounded set.

Let $(V_n)_{n\in {\mathbb N}}$ be a countable fundamental system of
open neighbourhoods of $x$ so that $\cap_{n} V_n=\{ x\}$, and
consider the closed subsets $f^{-1}(X-V_n)\subset {\mathbb L}_+$.
The latter are disjoint from $f^{-1}(x)$ and therefore
bounded (recall that two closed unbounded subsets of ${\mathbb L}_+$ always
intersect). Hence $f^{-1}(X-\{x\})=\cup_{n\in{\mathbb N}}f^{-1}(X-V_n)$ is bounded
as well; beyond this bound $f$ can take only the value $x$. This completes the proof of the lemma.

\medskip
{\bf Proof of the Sublemma.} If not, then for all $x\in X$,
$C\cap ({\mathbb L}_+\times \{ x \})$ is ${\mathbb L}_+$-bounded. So
there is a $\beta_x\in {\mathbb L}_+$ such that
$[\beta_x,\omega_1)\times  \{x\}$ does not intersect $C$. Fix some
$x\in X$. Then there is an $n$ so that the ``thickening''
$[\beta_x,\omega_1)\times V_n$ still does not meet $C$. If not
construct a sequence $\langle x_n\rangle$ by choosing points $x_n \in
\bigl([\beta_x,\omega_1)\times V_n \bigr) \cap C $ which due
to the sequential compactness of $[\beta_x,\omega_1)$ can be
assumed to be convergent (extracting a subsequence if
necessary). The limiting point $x_{\omega}$ would belong to
$\bigl([\beta_x,\omega_1)\times  \{x\}\bigr) \cap C$. A
contradiction.

Now let $x$ vary, and denote the $V_n$ above more accurately
by $V_{n(x)}^{x}$. The $(V_{n(x)}^{x})_{x\in X}$ form an open
cover of $X$. By Lindel\"ofness we may extract a countable
subcover corresponding to some countable subset $N$ of $X$.
Then $\beta=\sup_{x\in N} \beta_x$ is an ${\mathbb L}_+$-bound
for $C$. This contradiction proves the sublemma. \endproof

{\it Note.} None of the assumptions in Lemma \ref{Mathieu} can be removed. First countability cannot be relaxed (choose as $X$ the one-point compactification of ${\mathbb L}_+$), Lindel\"of is of course essential (take $X={\mathbb L}_+$) and finally Hausdorffness cannot be completely omitted (take for $X$ a space with coarse topology of cardinality at least two).
\smallskip

Even for manifolds we cannot expect a simple converse of Lemma \ref{Mathieu} as the following example shows.

\begin{exam}Both classical versions of the Pr\"ufer surface are squat.
\end{exam}

Both versions of the Pr\"ufer surface we are referring to are obtained from the surface-with-boundary $P$ introduced in Section \ref{Visualising Kneser's example} either by adding open collars or by doubling it (see Figure \ref{PruferSquat}); denote either here by $\widehat P$.

To verify squatness of both versions of the Pr\"ufer surface, given a continuous map $f:\mathbb L_+\to\widehat P$, follow it by projection onto the $y$-coordinate. As this gives a map into $\mathbb R$, it follows that the $y$-coordinate of $f$ is eventually constant, which then implies that eventually $f$ maps into a copy of $\mathbb R$ and hence $f$ is eventually constant.

\begin{figure}[h]
\centering
\scalebox{1.1}{\includegraphics{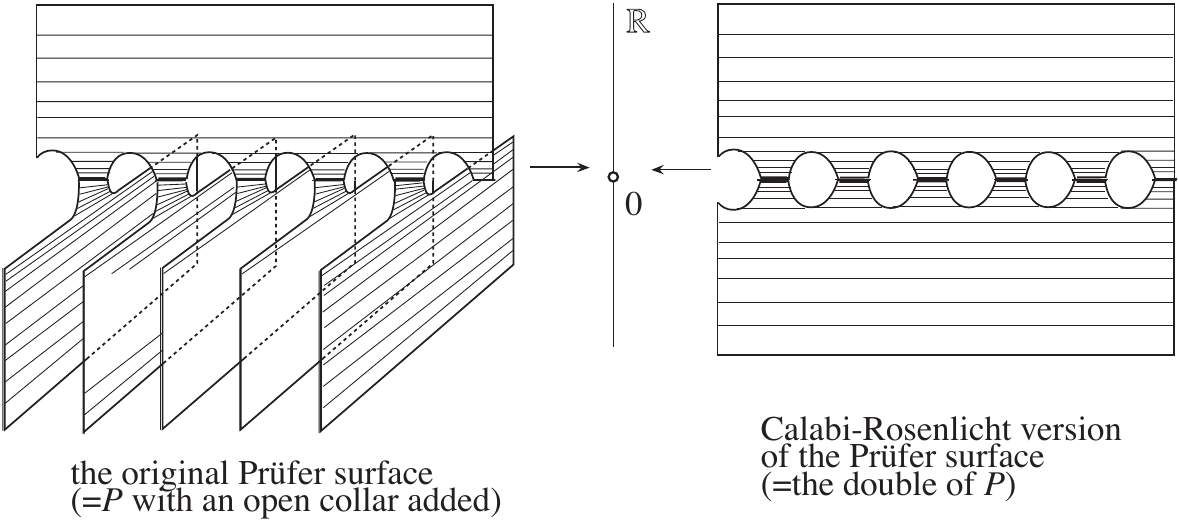}}
    \caption{\label{PruferSquat} Pr\"ufer surfaces are squat}
\end{figure}

\medskip
Let us emphasise that separability and squatness are logically unrelated, a point which is especially relevant in Theorem \ref{fall in}. The original (collared) Pr\"ufer surface is squat but not separable. On the other hand Nyikos has described (unpublished) a surface-with-boundary $N$ whose interior is $\mathbb R^2$ and whose boundary is $\mathbb L_+$. The doubled surface $2N$ is separable but not squat.

\begin{lemma}[Tube Lemma] \label{tube} {\rm(cf. \cite{Gauld})}
Suppose $L$ is a leaf of a dimension-one foliation $\mathcal F$ on a manifold $M^m$ and that $e:[0,2]\to L$ is an embedding. Then there is a foliated chart $(U,\varphi)$ such
that $e([0,2])\subset U$.
\end{lemma}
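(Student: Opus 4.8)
The plan is to cover the compact arc $e([0,2])$ by finitely many foliated charts, each of which meets the arc in a connected subarc (a plaque), and then splice these charts together into a single foliated chart by induction on the number of charts. First I would use compactness of $[0,2]$ (equivalently of the embedded arc $e([0,2])$, since $e$ is an embedding hence a homeomorphism onto its image) to choose finitely many foliated charts $(U_1,\varphi_1),\dots,(U_k,\varphi_k)$ covering $e([0,2])$; after subdividing the parameter interval I may assume there is a partition $0=t_0<t_1<\dots<t_k=2$ with $e([t_{i-1},t_i])\subset U_i$, and moreover, shrinking each chart, that $e([t_{i-1},t_i])$ lies in a single plaque of $U_i$ (this uses that a plaque is a component of $\varphi_i^{-1}(\mathbb R^{p}\times\{y\})$ and that a small enough connected piece of the leaf $L$ stays in one plaque of a given chart).

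The inductive step is to show that if $e([0,s])$ is contained in a single foliated chart $(V,\psi)$ with $e([0,s])$ a plaque of $V$, and $e([s-\epsilon,s+\epsilon])$ lies in a foliated chart $(U,\varphi)$, then $e([0,s+\epsilon])$ is contained in a single foliated chart. This is where the coordinate-transformation condition of Definition~\ref{define foliation} does the work: on the overlap $V\cap U$ the change of coordinates has the triangular form $(x,y)\mapsto(g(x,y),h(y))$, so the transverse coordinate $y$ of $\psi$ and of $\varphi$ are matched by an embedding $h$, and one can glue $\psi$ and $\varphi$ along a neighbourhood of the overlapping portion of the arc, rescaling the plaque (first-factor) coordinate so that the plaque through $e(s)$ becomes all of $\mathbb R^{p}\times\{0\}$. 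Concretely one builds a new chart whose domain is (a shrinking of) $V\cup U$, whose transverse coordinate is inherited consistently from $\psi$ (extended via $h$ over the $U$-part), and whose first coordinate is obtained by straightening out the two plaque-coordinates; one must verify that after suitable shrinking the resulting map is a homeomorphism onto an open subset of $\mathbb R^{p}\times\mathbb R^{m-p}$ and is compatible (again by the triangular condition) with the rest of the atlas, so that after passing to the maximal atlas it is a genuine foliated chart.

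I expect the main obstacle to be the gluing/straightening step rather than the covering step: one has to be careful that the two charts overlap in a region that is a product neighbourhood of a subarc of the leaf (not something more complicated), that shrinking the charts to arrange this does not destroy the property that each still contains the relevant subarc of $e$ as a plaque, and that the first-factor coordinates can be matched up continuously along the whole overlap. The embedding $e$ on the \emph{closed} interval $[0,2]$ (as opposed to an immersed or merely continuous path) is what rules out the leaf re-entering a chart along a different plaque or spiralling, and is essential; Hausdorffness of $M$ guarantees the image is genuinely a compact arc. The only genuinely delicate point is bookkeeping the finitely many straightenings so that at each stage one retains ``$e([0,t_i])$ is a single plaque of the current chart'', which is what lets the induction proceed; this is routine but must be stated with care. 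Since the statement is used later only to produce \emph{one} foliated chart containing the arc, no uniqueness or canonicity is needed, which simplifies matters considerably.
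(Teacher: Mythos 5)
Your reduction --- cover the compact arc by finitely many foliated charts subordinate to a partition of $[0,2]$ and induct --- is exactly the paper's first step, and the reduction to the two-chart case (given $e([0,1])\subset U$ and $e([1,2])\subset V$, produce one foliated chart containing $e([0,2])$) is the right formulation. The gap is in the two-chart step itself. You propose to build the new chart on (a shrinking of) $U\cup V$ by matching the transverse coordinates through the transition embedding $h$ and ``straightening out the two plaque-coordinates'', and you explicitly defer the verification that the result is a homeomorphism onto an open subset of $\mathbb R\times\mathbb R^{m-1}$ compatible with the atlas. But that deferred verification is the whole content of the lemma: the overlap $U\cap V$ need not be connected or a product neighbourhood of a subarc of the leaf; $h$ is only defined on the transverse coordinates realised in the overlap and there is no reason it should extend to an embedding of the full transverse slice of $U$ (which your recipe needs in order to propagate the transverse coordinate of $\psi$ over the $U$-part); and no construction is given for the straightened first coordinate on the union. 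As written, the proposal restates the problem at the critical point rather than solving it, and you have correctly located, but not removed, the obstacle.

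The paper's proof sidesteps gluing altogether, and this is the idea you are missing. Normalise $\psi e(1)=(0,\dots,0)$, take a product box $C=[a,b]\times\mathbb B^{m-1}$ in the $V$-coordinates containing $\psi e([1,2])$ in its interior, and set $\theta(x,y)=(\eta_{\|y\|}(x),y)$, where $\eta_t$ is an isotopy of $[a,b]$ rel endpoints from a homeomorphism carrying $0$ to the first coordinate of $\psi e(2)$ to the identity. Since $\theta$ is the identity near $\partial C$, the map $\psi^{-1}\theta\psi$ extends by the identity to a self-homeomorphism $\Theta$ of the manifold that moves points only along the plaques of $V$. Consequently $W=\Theta(U)$ with $\chi=\varphi\circ\Theta^{-1}$ is automatically a foliated chart --- its transitions with the rest of the atlas retain the triangular form because $\Theta$ preserves each plaque of $V$ setwise --- and it contains $e([0,2])$ because $\Theta$ drags the plaque of $U$ through $e(1)$ along the leaf past $e(2)$. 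The new chart is a deformed copy of $U$ alone, with transverse coordinate simply that of $\varphi$, so the extension problem for $h$ and the connectivity of the overlap never arise. If you insist on your splicing route, you would first have to shrink $U$ and $V$ to product neighbourhoods of subarcs of the leaf meeting in a single connected product piece, and arranging that is essentially the lemma over again; the one-dimensionality of the plaques is what makes the paper's interval isotopy $\eta_t$ available for free.
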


\begin{proof}
There is a partition $\{0=t_0<t_1<\dots<t_n=2\}$ of $[0,2]$
and foliated charts $(U_1,\varphi_1),\dots, (U_n,\varphi_n)$ so
that for each $i$, $e([t_{i-1},t_i])\subset U_i$. Thus using
induction on $i$ it suffices to show:
\begin{itemize}
\item if there are foliated charts $(U,\varphi)$ and $(V,\psi)$
such that $e([0,1])\subset U$ and $e([1,2])\subset V$ then
there is a foliated chart $(W,\chi)$ such that $e([0,2])\subset
W$.
\end{itemize}
We may assume that $\psi e(1)=(0,\dots,0)$. Let $C$ be a closed subset of $\mathbb R^m$ of the form $[a,b]\times\mathbb B^{m-1}$ containing $\psi e([1,2])$ in its interior, where $\mathbb B^{m-1}$ is the closed unit ball in $\mathbb R^{m-1}$. Let $\eta:[a,b]\to[a,b]$ be a homeomorphism fixing the end points and sending $0$ to the first coordinate of $\psi e(2)$. Let $\eta_t:[a,b]\to[a,b]$ be an isotopy fixing $\{a,b\}$ such that $\eta_0=\eta$ and $\eta_1$ is the identity. Define $\theta:C\to C$ by $\theta(x,y)=(\eta_{\|y\|}(x),y)$ for $(x,y)\in C$. See Figure \ref{TubeLemma}.

Let $W=\left( U-\psi^{-1}(C)
\right)\cup\left(\psi^{-1}\theta\psi(U\cap V)\right)$ and
define $\chi:W\to\mathbb R^m$ by
\[ \chi(\xi)=\left\{
\begin{array}{ccc} \varphi(\xi) & \mbox{if} & \xi\in
U-\psi^{-1}(C)\\
\varphi\psi^{-1}\theta^{-1}\psi(\xi) & \mbox{if} &
\xi\in\psi^{-1}\theta\psi(U\cap V). \end{array} \right.\]
Then $(W,\chi)$ is the required chart.
\end{proof}

\begin{figure}[h]
\centering
\scalebox{1.0}{\includegraphics{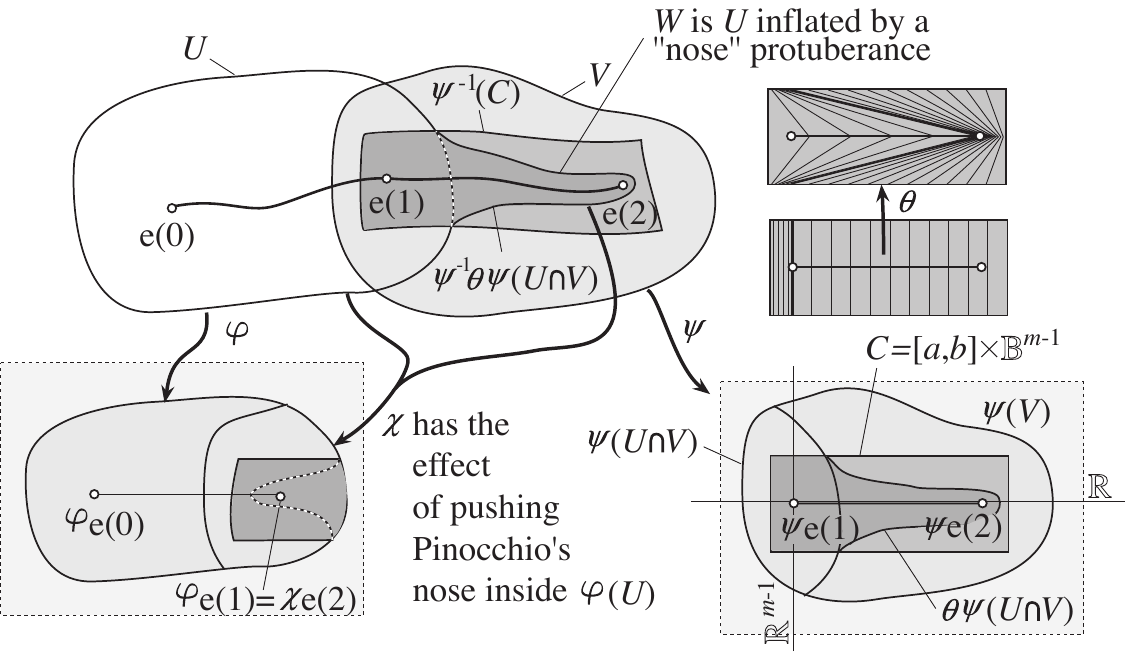}}
    \caption{\label{TubeLemma} Extending a foliated chart along a leaf}
\end{figure}

\begin{lemma}  \label{density}
Let $\mathcal F$ be a foliation of dimension $n$ on the product
$M\times N$ of a manifold $M$ with a connected $n$-manifold $N$.
Assume that there is a dense subset $D$ of $M$ such that for
each $d\in D$ the subset $\{d\}\times N$ is a leaf of
$\mathcal F$. Then $\mathcal F$ is the trivial product foliation with leaves of the form $\{x\}\times N$ for $x\in M$.
\end{lemma}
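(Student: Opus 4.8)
The plan is to show that \emph{every} point $x\in M$ has the property that $\{x\}\times N$ is a leaf; the hypothesis then says this is already known for a dense set $D$, and the trivial product foliation is the unique foliation containing these leaves. The key local tool is the Tube Lemma (Lemma~\ref{tube}) together with the fact that plaques near a leaf are controlled by the foliated-chart structure. First I would fix an arbitrary $x_0\in M$ and a point $(x_0,q)\in M\times N$, and choose a foliated chart $(U,\varphi)$ around $(x_0,q)$ with $\varphi(U)=\mathbb R^n\times\mathbb R^{\dim M}$, where the $\mathbb R^n$-factor records position along plaques. Shrinking, I may take $U=V\times W$ with $V\subset M$, $W\subset N$ open and connected, and coordinates arranged so that the plaque through any point $(d,w)$ with $d\in D\cap V$ is exactly $\{d\}\times W$ (this uses that $\{d\}\times N$ is a leaf, hence meets $U$ in a union of plaques, each of which is an open subset of $\{d\}\times N$; by a further shrink we may assume it is a single plaque $\{d\}\times W$ and that these are the level sets of the second coordinate of $\varphi$).

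Next, the crucial step: I claim the plaque through $(x_0,w)$ for any $w\in W$ is also horizontal, i.e. of the form $\{x_0\}\times W'$ for some open $W'\ni w$. The plaque $P$ through $(x_0,w)$ is connected and, projected to $M$, is an immersed connected submanifold of $V$ passing through $x_0$; but it is squeezed between nearby horizontal plaques $\{d\}\times W$ with $d\in D$ arbitrarily close to $x_0$ on either side. Because $D$ is dense, and the plaques of a foliated chart are pairwise disjoint and vary continuously, $P$ cannot move away from $x_0$ in the $M$-direction without crossing one of the horizontal plaques $\{d\}\times W$; hence its $M$-projection is constant equal to $x_0$. Thus every point of $U$ lies on a horizontal plaque, so $\{x_0\}\times W$ is a plaque, i.e. an open piece of the leaf through $(x_0,q)$.

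Finally I would promote this local statement to a global one along $N$. Fix $x_0\in M$. The set $A=\{w\in N:\{x_0\}\times W_w\text{ is a plaque for some neighbourhood }W_w\ni w\}$ is open in $N$ by construction, and I claim it is also closed. If $w_j\to w$ with $w_j\in A$, take a foliated chart around $(x_0,w)$; the argument of the previous paragraph (which only used density of $D$ in $M$, available everywhere) shows again that the plaque through $(x_0,w)$ is horizontal, so $w\in A$. Since $N$ is connected, $A=N$, and the leaf through $(x_0,q)$ is contained in (hence, being a leaf, equal to) $\{x_0\}\times N$. As $x_0$ was arbitrary, every $\{x\}\times N$ is a leaf; these fill $M\times N$ and determine the foliation, so $\mathcal F$ is the trivial product foliation. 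The main obstacle is the middle step — rigorously arguing that a plaque trapped between horizontal plaques arbitrarily nearby on both sides must itself be horizontal — which requires care because a priori the plaque through $(x_0,w)$ need not lie inside the same coordinate chart as all the relevant $\{d\}\times W$; one handles this by working inside a single chart and using that within it the plaques are precisely the fibres of a continuous submersion, so the $M$-projection of the plaque through $(x_0,w)$ is a connected subset of $V$ avoiding the dense set $\{d:d\in D\cap V,\ d\neq x_0\}$ of horizontal-plaque footprints, forcing it to be the single point $x_0$.
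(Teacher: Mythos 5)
Your overall strategy (fix $x_0$, show locally that the plaque through $(x_0,w)$ is horizontal by comparing it with the nearby horizontal leaves $\{d\}\times N$, then propagate along $N$ by connectedness) is the same as the paper's, but your key local step has a genuine gap. You argue that the $M$-projection of the plaque $P$ through $(x_0,w)$ is a connected subset of $V$ avoiding the dense set $(D\cap V)\setminus\{x_0\}$ and conclude that it must be the single point $x_0$. That implication is false as soon as $\dim M\ge 2$: the complement of a countable dense subset of $\mathbb R^2$ (e.g.\ $\mathbb R^2\setminus\mathbb Q^2$) is connected, indeed path-connected, so a connected set can contain $x_0$, avoid $D$, and still be large. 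Your picture of $P$ being ``trapped between horizontal plaques on either side'' is intrinsically one-dimensional in the $M$-direction, whereas the lemma is applied in the paper with $M$ an open subset of $\mathbb S^2$ or $\mathbb S^{n-1}$ (Corollaries \ref{cannibal} and \ref{longglass}), so the case $\dim M\ge 2$ cannot be ignored. (A secondary, repairable point: you cannot shrink $U$ so that the plaque through $(d,w)$ equals $\{d\}\times W$ simultaneously for every $d$ in the dense set; you only know each such plaque has the form $\{d\}\times W'_d$ with $W'_d\subset W$ open, which fortunately is all your disjointness argument needs.)

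The correct way to exploit the squeezing is through the transverse coordinate of the chart rather than through connectedness of the projection. Let $\tau$ denote the last $\dim M$ coordinates of $\varphi$ on a chart $U$ about $(x_0,q)$, normalised so that $\tau(x_0,q)=0$, and choose $d_k\in D$ with $d_k\to x_0$ together with a connected open $O\ni q$ such that $\{x_0\}\times O\subset U$. For large $k$ the connected set $\{d_k\}\times O$ lies in $U$ and in the leaf $\{d_k\}\times N$, hence in a single plaque, so $\tau(d_k,z)=\tau(d_k,q)$ for every $z\in O$. Letting $k\to\infty$ and using continuity of $\tau$ gives $\tau(x_0,z)=\tau(x_0,q)=0$ for all $z\in O$, i.e.\ $\{x_0\}\times O$ lies in a single plaque. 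This is precisely the paper's argument; it works in every dimension, and the remainder of your proof (openness and closedness in $N$, connectedness of $N$) then goes through unchanged.
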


\begin{proof} Let $x\in M$: we show that $\{ x \} \times N$ is a leaf. Choose a sequence $\langle d_k \rangle$ from $D$ with $d_k\to x$. Suppose $y \in N$ and choose a
foliated chart $(U,\varphi)$ about $(x,y)$ with $\varphi(x,y)=0$ and open $O\subset N$ so that $y\in O$ and $\{x\}\times O\subset U$.
Because $\{ d_k \}\times N$ is a leaf for each $k$,
it follows that all points of $\varphi((\{ d_k \}\times N)\cap U)$ have the last $n$ coordinates the same. Moreover, as $(d_k, y) \to (x,y)$ and $\varphi(x,y)=0$ these coordinates must all go to $0$ as $k\to
\infty$. For each $z\in O$, as $(d_k,z)\to(x,z)$, then
$\varphi(x,z)$ has last coordinates all equal to
$0$. It follows that $\{x \}\times O$ lies in a single leaf.
As $y\in N$ was arbitrary and $N$ is connected, it follows that $\{x
\}\times N$ is a single leaf.
\end{proof}

 \section{Black holes} \label{Black holes}

In this section we use the concept of a squat manifold to analyse the asymptotic behaviour of a dimension-one foliation on a product manifold $M\times\mathbb L_+$, provided that $M$ is ``sufficiently small'' in the sense that it is both squat and separable. Basically squatness forces an individual ``long'' leaf to move vertically inside the product while separability enable us to extend this individual verticality to a collective one for the foliation.

To state the first result let us agree on some terminology:

\begin{defn}
Call a one-dimensional (Hausdorff) manifold \emph{long} if it
is non-metrisable (so by the classification it is either the
long ray or the long line), and \emph{short} otherwise.
\end{defn}

\begin{theorem}\label{fall in}
Suppose that $M$ is a squat, separable manifold and that $\mathcal F$ is a dimension-one foliation on $M\times\mathbb L_+$ having at least one long leaf. Then there is $\alpha\in\mathbb L_+$ so that $\mathcal F$ restricted to $M\times(\alpha,\omega_1)$ is the trivial product foliation by long rays. Say in this case that the foliation $\mathcal F$ is ultimately vertical.
\end{theorem}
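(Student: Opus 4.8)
The plan is to exploit the hypotheses in two stages: first use squatness to pin down the behaviour of a single long leaf, then use separability to propagate verticality to the whole foliation via Lemma \ref{density}. Let $L$ be a long leaf of $\mathcal F$; by the classification of one-manifolds $L$ is either $\mathbb L$ or $\mathbb L_+$. Composing the inclusion $L\hookrightarrow M\times\mathbb L_+$ with the projection to $\mathbb L_+$ gives a continuous map on a long ray (a sub-ray of $L$), so by squatness of $M$ — wait, we need squatness of $M$ applied the other way: the projection $L\to M$ is continuous and $L$ contains a copy of $\mathbb L_+$, so since $M$ is squat this projection is eventually constant, say equal to $d\in M$. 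Hence a tail of $L$ lies in $\{d\}\times\mathbb L_+$, and since that slice is a connected one-manifold containing an unbounded piece of the leaf $L$, a tail of $\{d\}\times\mathbb L_+$ is contained in $L$; as $L$ is a leaf, in fact $\{d\}\times(\beta,\omega_1)$ is contained in $L$ for some $\beta$.

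Next I would upgrade from one vertical slice to a dense set of them. The key point is that the set $D_0=\{d\in M: \{d\}\times(\gamma,\omega_1)\subset L'\text{ for some leaf }L'\text{ and some }\gamma\}$ should be shown to be open and, using separability together with a club-intersection argument, dense in $M$. Openness is a local statement: near a point $(d,\alpha)$ with $\alpha$ large and $\{d\}\times(\beta,\omega_1)$ in a leaf, take a foliated chart $(U,\varphi)$ straightening that leaf locally; nearby slices $\{d'\}\times(\text{small interval})$ must then be plaques (their $\mathbb L_+$-coordinate is forced by squatness of $M$ to be eventually constant on each such leaf, but one must check the plaque through a nearby point is horizontal-in-$\mathbb L_+$, i.e. of the form $\varphi^{-1}(\{\ast\}\times\mathbb R)$ with the $\mathbb L_+$-direction being the "$y$" coordinate). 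Here Lemma \ref{tube} and the local product structure of foliated charts do the work. For density, suppose $D_0$ is not dense; then some open $W\subset M$ with $W\times\mathbb L_+$ avoiding all "ultimately vertical" leaves would have to be foliated in a way where every leaf meeting $W\times\{\alpha\}$ for unbounded $\alpha$ projects non-eventually-constantly to $\mathbb L_+$ — contradicting squatness of $M$ again once one follows a single leaf. More carefully: pick any $d\in W$; the slice $\{d\}\times\mathbb L_+$ meets uncountably many plaques, and a Pressing-Down / club argument on $\omega_1$ (each leaf through a point of the slice, being traced back, is eventually constant in $M$ by squatness) produces a leaf that is eventually equal to a vertical slice over a point that can be taken in $W$ by first countability of $M$ at $d$ and shrinking — landing a point of $W$ in $D_0$.

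Finally, once $D_0$ is dense, I restrict attention to a tail. For each $d\in D_0$ we have $\{d\}\times(\gamma_d,\omega_1)$ in a leaf; the function $d\mapsto\gamma_d$ need not be bounded on all of $M$, but using separability pick a countable dense $D_1\subset D_0$ and set $\alpha=\sup_{d\in D_1}\gamma_d<\omega_1$. Then for every $d\in D_1$, the slice $\{d\}\times(\alpha,\omega_1)$ is a leaf of the restricted foliation $\mathcal F|_{M\times(\alpha,\omega_1)}$, and $D_1$ is dense in $M$. Now Lemma \ref{density}, applied with "$M$" there being our $M$ and "$N$" being the connected one-manifold $(\alpha,\omega_1)\cong\mathbb L_+$, yields that $\mathcal F|_{M\times(\alpha,\omega_1)}$ is the trivial product foliation by long rays, which is exactly the assertion.

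\textbf{Main obstacle.} The delicate step is the density of $D_0$ (equivalently, openness plus a no-counterexample argument): one must genuinely combine squatness — which only controls \emph{individual} long leaves — with separability to get a \emph{collective} statement, and the bookkeeping of "which coordinate direction in a foliated chart is the $\mathbb L_+$-direction" together with the club/pressing-down argument on $\omega_1$ is where the real content lies. The openness sub-step also quietly needs that a leaf cannot oscillate: a long leaf whose $\mathbb L_+$-projection is eventually constant must have an honest tail inside a single vertical slice, which is where the classification of one-manifolds (no leaf is, say, $\mathbb L_+$ wound around so as to meet a slice in a non-closed set) is used.
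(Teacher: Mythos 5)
Your Step 1 and your final step match the paper's proof: squatness pins an individual long leaf to a vertical tail $\{x\}\times[\alpha,\omega_1)$, and separability plus Lemma \ref{density} upgrades a dense set of vertical slices to a product foliation on a tail. The gap is in the middle. You need the set $D_0$ of base points over which the foliation is eventually vertical to contain a countable dense subset of $M$, and your argument for density of $D_0$ does not work. You write that ``each leaf through a point of the slice, being traced back, is eventually constant in $M$ by squatness'', but squatness only constrains \emph{long} leaves, i.e.\ those containing an embedded copy of $\mathbb{L}_+$. The leaves meeting $W\times\mathbb{L}_+$ for an open $W$ disjoint from $D_0$ could perfectly well all be short (circles or copies of $\mathbb{R}$), as indeed happens on all of $\mathbb{S}^1\times\mathbb{L}_+$ in case (i) of Theorem \ref{black hole}; no pressing-down argument applied to such leaves produces a vertical tail. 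So nothing in your sketch rules out $D_0$ being a proper nonempty open set whose closure misses some open $W$.

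What the paper does instead is prove that $A=D_0$ is nonempty, \emph{open and closed}, and invoke connectedness of $M$; closedness is a routine sequential-limit check with foliated charts, and this is the step you should substitute for your density claim. Openness is the real content, and it is exactly the step your sketch underestimates: a single foliated chart at $(d,\alpha)$ controls plaques, not the global fate of nearby leaves, which may leave the chart and wander. The paper applies the Tube Lemma (Lemma \ref{tube}) to the arbitrarily long vertical arcs $\{x\}\times[\alpha,\beta]$ to get, for each $\beta$, a basic neighbourhood $V_{n}$ of $x$ all of whose leaves cross $M\times\{\beta\}$; an uncountable pigeonhole over $\mathbb{N}\times\omega_1$ then fixes a single $V_n$ whose leaves cross $M\times\{\beta\}$ for uncountably many $\beta$, hence are long, hence eventually vertical by your Step 1. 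A second application of the Tube Lemma together with Lemma \ref{density} converts ``each leaf through a countable dense subset of $V_n\times\{\alpha\}$ is eventually vertical'' into verticality over an actual neighbourhood of $x$. You would need to supply this pigeonhole argument to make openness rigorous, and replace the density step by closedness plus connectedness to complete the proof.
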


\begin{proof*} The proof breaks into three steps.

\smallskip
\noindent {\bf Step 1.} {\it If $L$ is a long leaf of $\mathcal F$ then there are $x\in M$ and $\alpha\in {\mathbb L}_+$ such that $L \supset\{x\}\times[\alpha,\omega_1)$. (Say in this case that the foliation is vertical above the point $(x, \alpha)$).}

\smallskip
Let $i:\mathbb L_+\to L$ be an embedding. As $M$ is squat the $M$-coordinate of $i$ is eventually constant, say equal to $x$ after some $\beta \in{\mathbb L}_+$. Next the $\mathbb{L}_+$-coordinate of $i$ cannot be bounded for if it were then it would be contained in a homeomorph of $\mathbb R$, which is squat, so the second coordinate would be eventually constant, violating the injectivity of $i$. It follows that $i([\beta,\omega_1))=\{x\}\times [\alpha,\omega_1)$, where $\alpha$ is the ${\mathbb L}_+$-coordinate of $i(\beta)$. This establishes Step 1.

\medskip
\noindent {\bf Step 2.} {\it Let
\[ A=\{x\in M:\mbox{there is } \alpha\in\mathbb{L}_+ \mbox{ so that } \{x\}\times[\alpha,\omega_1) \mbox{ lies in a single leaf of } \mathcal F\}.\]
Then we claim that $A=M$.}

\smallskip
Since $M$ is connected, it suffices to show (i)
$A\not=\varnothing$; (ii) $A$ is open; (iii) $A$ is closed.
\begin{itemize}
\item[(i)] $A\not=\varnothing$. This follows from the
assumption that $\mathcal F$ has at least one long leaf and
Step 1.

\item[(ii)] $A$ is open.

Let $x\in A$, so there is an $\alpha \in \mathbb L_+$ so that
the foliation $\mathcal F$ is vertical above the point
$(x,\alpha)$. Since $x$ is a point in a manifold $M$ we can
fix a countable fundamental system of neighbourhoods
$(V_n)_{n\in \mathbb N}$. By applying Lemma \ref{tube} to the
arc $\{x\}\times[\alpha,\beta]$ for varying $\beta\in \omega_1$
greater than $\alpha$, we see that for each such $\beta$ there
is an $n\in {\mathbb N}$ such that ``{\it every leaf through
$V_n \times \{\alpha\}$ crosses $M\times \{\beta\}$}''. Call this last
(italicised) statement $S(n,\beta)$ and let $S=\{ (n,\beta)\in
\mathbb N\times\omega_1 :  S(n,\beta) {\rm \;is \;true}\}$.
By the argument above the set $S$ is uncountable, hence there is an $n\in {\mathbb N}$ so that $S\cap (\{n\}\times \omega_1)$ is uncountable. This means that each leaf
through $V_n\times \{\alpha\}$ crosses $M\times \{\beta\}$ for
uncountably many $\beta > \alpha$. In particular each such
leaf is long.

By squatness of the base $M$, each long leaf of $\mathcal F$
stabilises and becomes {\it purely vertical} above some height
$\alpha\in {\mathbb L}_+$, i.e. the leaf intersects $M\times[\alpha,\omega_1)$ in one or two vertical segments (depending on whether the long leaf under inspection is a long ray or a long line).

Take now $D$ a countable dense subset of $V_n\times \{\alpha\}$.
Each leaf $L_d$ through the point $d\in D$ is long,
and so by the previous discussion there is a height
$\alpha_d\in {\mathbb L}_+$ above which $L_d$ is
purely vertical. Consider $\alpha_D=\sup_{d\in D} \alpha_d \in
{\mathbb L}_+$. Apply Lemma \ref{tube} to
$\{x\}\times[\alpha,\alpha_D]$ to produce a foliated chart $(U, \varphi)$
with $U\supset \{x\}\times[\alpha,\alpha_D]$. Looking through the
chart one obtains a pair of neighbourhoods $N$, $N'$ of
$\varphi(x, \alpha)$ in $\varphi((V_n\times \{\alpha\}) \cap U)$
respectively of $\varphi(x, \alpha_D)$ in $\varphi((M\times
\{\alpha_D\}) \cap U)$ related by a homeomorphism $h: N \to N'$
which is just propagation along the vertical straight lines
(see Figure \ref{FoliationTube}). Let $\Delta:=\varphi(D\cap
U) \cap N$: by construction the foliation $\mathcal F$ is
vertical above $\varphi^{-1}(h(\Delta))$. Since
$\varphi^{-1}(h(\Delta))$ is dense in $\varphi^{-1}(N')$ it
follows from Lemma \ref{density} that $\mathcal F$ is vertical
above the neighbourhood $\varphi^{-1}(N')$, hence the
$M$-projection of $\varphi^{-1}(N')$ is a neighbourhood of $x$
contained in $A$.

\begin{figure}[h]
\centering
\scalebox{1.1}{\includegraphics{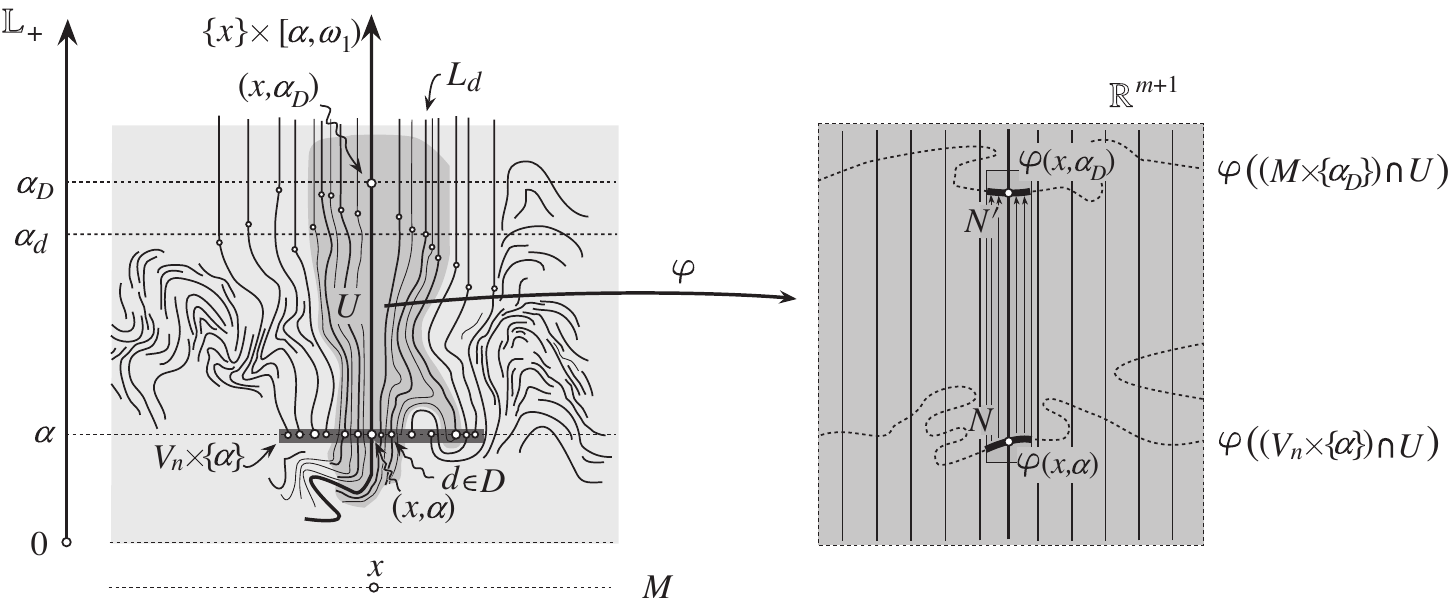}}
    \caption{\label{FoliationTube} Applying the tube lemma around a vertical leaf}
\end{figure}

\item[(iii)] $A$ is closed. Since $M$ is first countable, it suffices to show that $A$ is sequentially closed. Let $\langle x_n\rangle$ be a sequence in $A$ converging to $x$. For each $n$ there is $\alpha_n\in\mathbb L_+$ so that $\mathcal F$ is vertical above $(x_n,\alpha_n)$. We may assume that the sequence $\langle\alpha_n\rangle$ is increasing. Let $\alpha=\lim_{n\to\infty}\alpha_n\in\mathbb L_+$. Using foliated charts centred at various points $(x,\beta)$ with $\beta\ge\alpha$, it is routine to check that the foliation is vertical  above the point $(x,\alpha)$, and hence that $x\in A$.

\end{itemize}

\medskip
\noindent {\bf Step 3.} {\it $\mathcal F$ is ultimately vertical.}

\smallskip
Since $M$ is separable, it admits a countable dense subset $D$. For each $d\in D$ there is by Step 2 an $\alpha_d\in {\mathbb L}_+$ such that $\mathcal F$ is vertical above the point $(d,\alpha_d)$. Take $\alpha=\sup_{d\in D} \alpha_d$: then $\mathcal F$ is vertical above the subset $D \times \{\alpha\}$. By Lemma \ref{density}, one concludes that $\mathcal F$ is vertical above $M \times \{\alpha\}$. \hfill\rule{0.5em}{0.5em}

\end{proof*}

\begin{prop}\label{saturate}
Suppose that $M$ is a manifold with a decomposition of the
form $M=\cup_{\alpha\in\omega_1}U_\alpha$, where each
$U_\alpha$ is separable and open, $\overline{U_\alpha}\subset
U_\beta$ whenever $\alpha<\beta$, and
$U_\lambda=\cup_{\alpha<\lambda}U_\alpha$ whenever $\lambda$
is a limit ordinal. Suppose that $\mathcal F$ is a foliation
on $M$ for which all leaves are metrisable. Then
$C=\{\alpha\in\omega_1:U_\alpha \mbox{ is saturated by } \mathcal F\}$
is a closed unbounded subset of $\omega_1$. In particular, for
each $\alpha\in C$, the set $\overline{U_\alpha}- U_\alpha$ is
saturated by $\mathcal F$.
\end{prop}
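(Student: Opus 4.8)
The plan is to verify the two defining properties of a closed unbounded (club) set separately, using Criterion~\ref{closed in omega_1} to reduce closedness to convergence of increasing sequences. The key point throughout is that the hypotheses on the $U_\alpha$ (separable, nested with closures, continuous at limits) let us apply Theorem~\ref{fall in} locally: near any point where a long leaf escapes to infinity, the foliation must be ultimately vertical. Since all leaves are metrisable, in fact there are \emph{no} long leaves, and this is what makes the ``escaping'' phenomenon controllable.

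First I would treat closedness. Let $\langle\alpha_n\rangle$ be an increasing sequence in $C$ with supremum $\lambda$. By continuity at limits, $U_\lambda=\bigcup_n U_{\alpha_n}$. A leaf meeting $U_\lambda$ meets some $U_{\alpha_n}$, and since $U_{\alpha_n}$ is saturated the whole leaf lies in $U_{\alpha_n}\subset U_\lambda$; hence $U_\lambda$ is saturated and $\lambda\in C$. This direction is essentially formal.

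The substantive direction is unboundedness: given $\alpha_0\in\omega_1$, produce $\alpha\in C$ with $\alpha\ge\alpha_0$. The idea is to ``chase'' non-saturation upward and close off at a limit. Suppose $U_{\alpha_0}$ is not saturated; then some leaf $L$ meets $U_{\alpha_0}$ but exits it. Because $\overline{U_{\alpha_0}}$ is (sequentially) compact-ish inside each larger $U_\beta$ — more precisely, the frontier of $U_{\alpha_0}$ in $M$ is covered, via compactness of $\overline{U_{\alpha_0}}$ in $U_{\alpha_0+1}$ and separability, by finitely (or countably) many foliated charts — each plaque of $L$ crossing out of $U_{\alpha_0}$ is contained in $U_\beta$ for some countable $\beta$. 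Using that $L$ is metrisable, hence a \emph{short} $1$-manifold (circle or line) by the classification, $L$ has only countably many plaques, so $L\subset U_{\beta_1}$ for some $\beta_1<\omega_1$ with $\beta_1>\alpha_0$. Iterating: at stage $k$ list the (countably many, by separability of $U_{\alpha_k}$ and a Lindelöf/plaque-counting argument as in Theorem~\ref{Many Leaves}) leaves meeting $U_{\alpha_k}$ that are not contained in it, each such leaf being short and therefore contained in some $U_{\alpha_{k+1}}$ with $\alpha_{k+1}>\alpha_k$; set $\alpha=\sup_k\alpha_k<\omega_1$. Then $U_\alpha=\bigcup_k U_{\alpha_k}$ by continuity at limits, and any leaf meeting $U_\alpha$ meets some $U_{\alpha_k}$ and is therefore contained in $U_{\alpha_{k+1}}\subset U_\alpha$; so $U_\alpha$ is saturated, i.e. $\alpha\in C$, and $\alpha\ge\alpha_0$.

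The main obstacle is the bookkeeping in the unboundedness step: one must know that at each stage only \emph{countably many} leaves and plaques are involved, so that the sup of the stage-ordinals stays below $\omega_1$. This rests on two facts already available: $\overline{U_{\alpha_k}}$ is contained in the separable (hence Lindelöf) manifold $U_{\alpha_k+1}$, so it is covered by countably many foliated charts, whence only countably many plaques — and, by the metrisability/classification argument used in the proof of Theorem~\ref{Many Leaves}, each leaf carries only countably many plaques, so only countably many leaves are in play. (Here is where ``all leaves metrisable'' is indispensable: a long leaf could thread through uncountably many $U_\alpha$'s and wreck the count.) The final sentence, that $\overline{U_\alpha}-U_\alpha$ is saturated for $\alpha\in C$, is immediate: if $\alpha\in C$ pick $\beta\in C$ with $\beta>\alpha$ (possible since $C$ is unbounded), then $\overline{U_\alpha}-U_\alpha\subset U_\beta-U_\alpha$, and a leaf meeting this set lies in $U_\beta$ but cannot meet $U_\alpha$ (else it would lie in $U_\alpha$ by saturation), so it lies entirely in $U_\beta-U_\alpha$, a fortiori its portion is confined; since both $U_\alpha$ and $U_\beta$ are saturated, $\overline{U_\alpha}-U_\alpha=U_\beta\setminus U_\alpha\cap(\cdots)$ is a union of whole leaves.
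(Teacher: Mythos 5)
Your closedness argument and the overall club-chasing strategy match the paper's, but the unboundedness step has a genuine gap: you assert that only countably many leaves meet $U_{\alpha_k}$ (or meet it without being contained in it), citing separability and the plaque-counting argument of Theorem~\ref{Many Leaves}. That count is wrong. Lindel\"ofness of $U_{\alpha_k}$ gives countably many foliated \emph{charts}, but each chart carries a continuum of \emph{plaques}, and these typically lie on a continuum of distinct leaves; the plaque-counting argument only bounds the number of plaques \emph{per leaf}, not the number of leaves. Concretely, in the transfinitely glued Kneser foliation of $\mathbb S^1\times\mathbb L_+$ (Remark~\ref{foliating cylinders}), uncountably many real-line leaves meet $\mathbb S^1\times(0,\alpha)$ without being contained in it. Since your $\alpha_{k+1}$ is defined as a supremum over the bounding ordinals of all these leaves, and the family may be uncountable, that supremum could a priori be $\omega_1$ and the recursion breaks down.

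The paper repairs exactly this point with an extra idea you are missing: it takes a countable dense subset $D_{\alpha_k}\subset U_{\alpha_k}$, bounds only the countably many leaves through $D_{\alpha_k}$ by some $\alpha_{k+1}$, and then proves — using the Tube Lemma~\ref{tube} to put any embedded arc $e([0,1])$ of an arbitrary leaf $L$ meeting $U_{\alpha_k}$ inside a single foliated chart, and pushing a sequence $x_n\in D_{\alpha_k}$ with $x_n\to e(0)$ along the plaques to a sequence $y_n\to e(1)$ lying in $U_{\alpha_{k+1}}$ — that \emph{every} leaf meeting $U_{\alpha_k}$ is contained in $\overline{U_{\alpha_{k+1}}}$. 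This "leaves through a dense set control all leaves" step is what makes the supremum countable, and it is also what is needed (and absent from your sketch) to justify the final clause that $\overline{U_\alpha}-U_\alpha$ is saturated: your argument shows a leaf meeting the frontier avoids $U_\alpha$ and lies in some $U_\beta$, but not that it stays inside $\overline{U_\alpha}$, which again requires propagating the closure along the leaf via the Tube Lemma.
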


\begin{proof} Recall that a connected metrisable
manifold is Lindel\"of, hence each leaf of $\mathcal F$ is
contained in some $U_\alpha$ for $\alpha \in \omega_1$ (the
leaf is then said to be \emph{bounded by $\alpha$}).
We show that $C$ is unbounded. Construct an increasing
sequence $\langle\alpha_n\rangle$ in $\omega_1$ as follows.
Let $\alpha_0\in\mathbb \omega_1$ be arbitrary. Now suppose given $\alpha_n$.

Let $D_\alpha\subset U_\alpha$ be a countable dense subset and
consider the leaves of $\mathcal F$ which pass through points
of $D_{\alpha_n}$. Because each leaf is bounded, collectively they all are
bounded, say by $\alpha_{n+1}>\alpha_n$.
We claim that $L\subset \overline{U_{\alpha_{n+1}}}$ for each leaf
$L$ of $\mathcal F$ for which $L\cap U_{\alpha_n}\not=\varnothing$.

Suppose that $L$ is a leaf with $L\cap
U_{\alpha_n}\not=\varnothing$ and let $e:[0,1]\to L$ be any
embedding so that $e(0)\in U_{\alpha_n}$. To show that
$L\subset \overline{U_{\alpha_{n+1}}}$ it suffices to show
that $e(1)\in \overline{U_{\alpha_{n+1}}}$, because the
arcwise-connexity of $L$ allows the end-point $e(1)$ to reach
any point of $L$. By Lemma \ref{tube} there is a foliated chart
$(U,\varphi)$ so that $e([0,1])\subset U$.
Choose $\langle x_n\rangle$ a sequence in $D_{\alpha_n}$ converging to $e(0)$. Since $U$ is open the sequence eventually lands in $U$, and so by means of the chart we easily construct a sequence $\langle y_n\rangle$ converging to $e(1)$ by setting $\varphi^{-1}(L_n\cap H)=\{y_n\}$, where $L_n$ is the straight line through  $\varphi(x_n)$ and $H$ is the orthogonal hyperplane through $\varphi(e(1))$. By construction $y_n$ belongs to the same leaf as $x_n$, so each $y_n$ is in $U_{\alpha_{n+1}}$, and therefore the limit $e(1)$ belongs to $\overline{U_{\alpha_{n+1}}}$.

Now let $\alpha=\lim\alpha_n$. Then $\alpha\in C$, because if $L$ is any leaf meeting $U_\alpha$ then $L$ meets $U_{\alpha_n}$ for some $n$ (since $\alpha$ is a limit ordinal) and hence lies in $\overline{U_{\alpha_{n+1}}}\subset U_\alpha$, so $U_\alpha$ is saturated.

That $C$ is closed follows from Criterion \ref{closed in omega_1} and the fact that a union of $\mathcal F$-saturated subsets is saturated. \end{proof}

\section{Black hole consequences}\label{Black hole consequences}

In this section we complete our analysis of foliations on the simplest long pipe, $\mathbb S^1\times\mathbb  L_+$. We then exhibit a family of surfaces, almost none of which admit foliations but when we delete a point then the punctured surface admits a foliation. This is followed by a look at dimension-one foliations on $\mathbb  S^2\times\mathbb  L_+$.
\medskip

{\bf Proof of Theorem \ref{black hole}.} If there are only short leaves then by applying Proposition \ref{saturate} to $\mathbb S^1\times\mathbb L_+=\cup_{\alpha\in\omega_1}\mathbb S^1\times(0,\alpha)$ the situation described in (i) holds. (Notice that in Proposition \ref{saturate} we only obtained closedness of $C$ when it was restricted to $\omega_1$, but routine arguments will verify that $C$ is closed in $\mathbb L_+$.) On the other hand if there is a long leaf then situation (ii) follows from Theorem \ref{fall in}.
\hfill\rule{0.5em}{0.5em}

\begin{rem} \label{foliating cylinders}
{\rm In situation (ii) of Theorem \ref{black hole} it is possible to have a bounded collection of circular leaves running around the cylinder. The situation described in (i) is ``sharp'' in the sense that one cannot expect all leaves to be ultimately circular. Indeed first consider the \emph{Kneser foliation} on $\mathbb S^1\times[0,1]$, namely the unique foliation without circular leaves except the two boundaries which moreover is transverse to the foliation by intervals. Transfinite gluing of an $\omega_1$-sequence of such foliated annuli produces a foliation on $\mathbb S^1 \times {\mathbb L}_+$ such that the set $C$ is exactly $\omega_1$. At the opposite extreme the \emph{Reeb foliation} on the annulus $\mathbb S^1\times [0,1]$ develops ``singularities'' when reaching a limit ordinal. See Figure \ref{KneserReebFoliation}.}
\end{rem}

\begin{figure}[h]
\centering
\scalebox{1.0}{\includegraphics{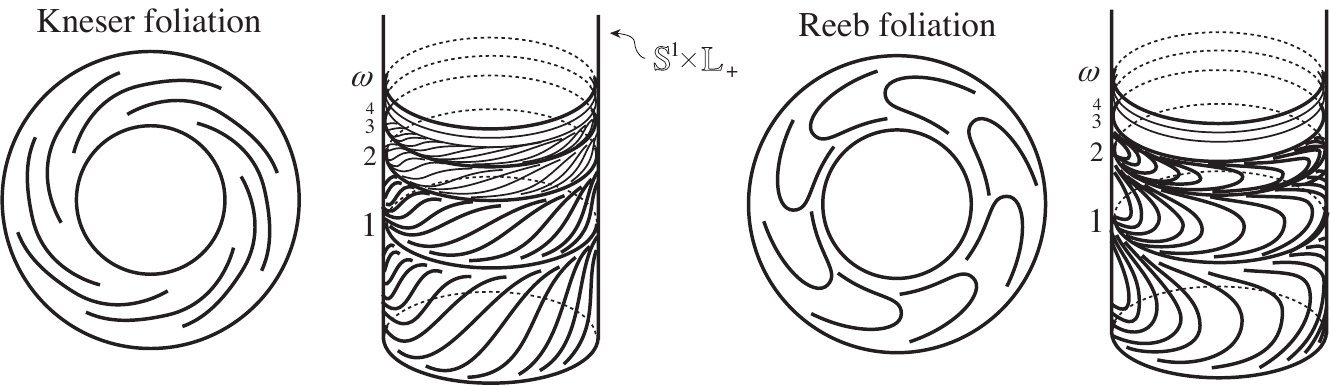}}
    \caption{\label{KneserReebFoliation} Transfinite gluing: possible with the Kneser foliation but impossible with the Reeb foliation}
\end{figure}

As an application of Theorem \ref{black hole} we now describe a family of open surfaces supporting no foliations. Start with $\Sigma_{g}$ a genus $g$ (orientable) closed surface. Cut out $n$ pairwise disjoint (open) disks to obtain $\Sigma_{g,n}$ a genus $g$ surface with $n$ boundary components, and glue back $n$ long cylinders $\mathbb S^1\times {\mathbb L}_{\ge 0}$ (see Figure \ref{Lambdagn}). The resulting surface ${\Lambda}_{g,n}$ could be termed the genus $g$ surface with $n$ {\it black holes}.

\begin{figure}[h]
\hspace{-0.5cm}
\begin{minipage}[b]{0.5\linewidth} 
\centering
\scalebox{0.9}{\includegraphics{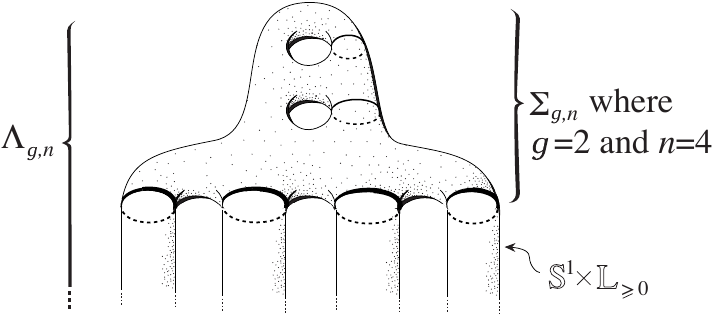}}
\caption{\label{Lambdagn} The genus $g$ surface
with $n$ black holes}
\end{minipage}
\hspace{-0cm} 
\begin{minipage}[b]{0.5\linewidth}
\centering
\scalebox{1.0}{\includegraphics{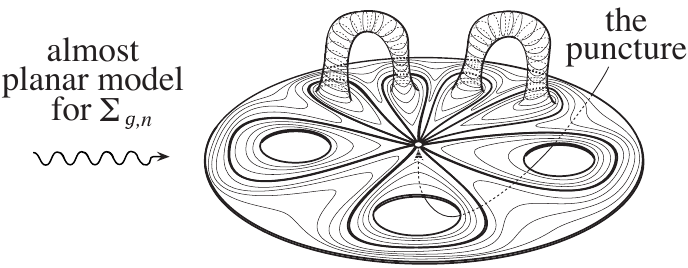}}
\caption{\label{FoliatedLambdagn} Foliation on the punctured surface of genus $g$ with $n$ boundary components}
\end{minipage}
\end{figure}

\medskip
{\bf Proof of Corollary \ref{non-foliable surfaces}.} Assume that ${\Lambda}_{g,n}$ has a foliation (of dimension one). The complement ${\Lambda}_{g,n}-\Sigma_{g,n}$ splits into $n$ tubes $\mathbb S^1\times{\mathbb L}_+$, each equipped with an induced foliation. According to Theorem \ref{black hole} there is in each of those tubes a circle either tangent or transverse to the foliation. Cutting the surface ${\Lambda}_{g,n}$ along those $n$ circles and discarding the non-metrisable components leads to a surface-with-boundary homeomorphic to $\Sigma_{g,n}$ with a foliation well behaved along the boundary. Therefore it can be doubled and yields a foliation on the double $2\Sigma_{g,n}$ which is a surface of genus $2g+(n-1)$. This is possible only if $2g+(n-1)=1$, which has only $2$ solutions $(g,n)=(0,2)$ and $(1,0)$, corresponding to the two exceptional surfaces. \hfill\rule{0.5em}{0.5em}

\smallskip
{\it Note.} We used the classical fact that a closed surface of genus $g$ carries a dimension-one $C^0$-foliation only if $g=1$. In Lemma~\ref{Euler} in the Appendix below, we recall how this follows from the Lefschetz fixed-point theorem applied to a dyadic `cascadization' of Whitney's flow generated by the foliation.

\medskip
Let us observe the following:

\begin{prop} If one performs a single puncture in any of the surfaces
${\Lambda}_{g,n}$ then it has a foliation. \end{prop}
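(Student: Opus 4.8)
The plan is to show that puncturing $\Lambda_{g,n}$ converts the obstruction from Corollary~\ref{non-foliable surfaces} into something foliable, essentially because an open disc \emph{minus a point} (equivalently $\mathbb{R}^2-\{0\}$, an annulus) is foliable even though the disc is not. First I would observe that the puncture can be placed wherever is convenient, namely in the interior of the metrisable core $\Sigma_{g,n}$, since $\Lambda_{g,n}$ is a connected surface and the homeomorphism type of the result does not depend on which point is removed. So set $S=\Sigma_{g,n}$ minus an interior point; the surface $\Lambda_{g,n}-\{\mathrm{pt}\}$ is then obtained by gluing $n$ long cylinders $\mathbb{S}^1\times\mathbb{L}_{\ge 0}$ to the $n$ boundary circles of the \emph{open} surface-with-boundary $S$ (thought of as $\Sigma_{g,n}$ with a puncture).

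The key step is to exhibit a dimension-one foliation on the compact-core piece $\Sigma_{g,n}$ with a puncture that is transverse to all $n$ boundary circles; then one simply caps each boundary circle with the trivial vertical (product) foliation by long rays on $\mathbb{S}^1\times\mathbb{L}_{\ge 0}$, which matches up because the boundary circle is a transverse circle to the vertical foliation of the long cylinder. To build the foliation on the punctured core I would use the classical Euler-characteristic count for surfaces-with-boundary: a compact surface-with-boundary admits a nowhere-zero vector field (hence a dimension-one foliation) transverse to the boundary exactly when its Euler characteristic allows it, and puncturing lowers $\chi$ by one. Concretely, $\chi(\Sigma_{g,n})=2-2g-n$, so $\chi$ of the punctured core is $1-2g-n$; doubling the punctured core along its boundary circles produces a closed surface of genus $2g+n$ (one more than in the proof of Corollary~\ref{non-foliable surfaces}) with a once-punctured... — more cleanly, I would argue directly: the doubled closed surface $2\Sigma_{g,n}$ has genus $2g+n-1$, and removing two points (the two copies of the puncture) yields an open surface which always carries a dimension-one foliation (every open surface does — cf. the discussion around Theorem~\ref{open metrisable}, or simply Kaplan/Haefliger-Reeb theory for open surfaces). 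Choosing that foliation symmetrically with respect to the doubling involution and transverse to the $n$ mirror circles (which can be arranged since the circles are separating curves avoiding the punctures), one restricts to one half and obtains the desired foliation on the punctured core, transverse along all boundary circles.

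Alternatively, and perhaps more transparently, I would give the foliation by hand using a Morse-type function: take a smooth function on $\Sigma_{g,n}$ that is a submersion near each boundary circle (so the circles are regular level sets) and has only Morse critical points in the interior; one can always cancel/push the critical points into the puncture — i.e. realize all critical points as the single removed point — leaving a submersion on the punctured core whose level curves give the transverse foliation. This is the non-metrisable-free, purely two-dimensional part of the argument and is where the genus and the number of black holes become irrelevant.

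The main obstacle I anticipate is bookkeeping the transversality-vs-tangency boundary condition cleanly: one must ensure the foliation on the punctured metrisable core meets each of the $n$ gluing circles transversally (so it glues to the \emph{vertical} product foliation on the long cylinder), rather than tangentially, and that a single puncture genuinely suffices no matter how large $n$ is — the point being that one puncture changes $\chi$ by one, which is exactly enough to move from the non-foliable parity ($2g+n-1\neq 1$) to a foliable configuration, and no further punctures are needed because open surfaces impose no Euler-characteristic obstruction at all. Once the core foliation is in place, capping with the trivial long-ray foliation on each $\mathbb{S}^1\times\mathbb{L}_{\ge 0}$ is routine.
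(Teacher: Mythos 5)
Your argument is correct, but it runs the gluing the other way round from the paper. The paper places the puncture in the compact nucleus $\Sigma_{g,n}$ (as you do), but then foliates the punctured nucleus so that the boundary circles are \emph{leaves} (using the explicit planar model of $\Sigma_{g,n}$ as a disc with $n-1$ holes and $g$ handles, where such a foliation can simply be drawn, the puncture absorbing the singularity), and extends over each tube by the foliation by \emph{circles} $\mathbb S^1\times\{\alpha\}$. You instead foliate the punctured nucleus \emph{transversally} to the boundary circles and cap each tube with the vertical foliation by long rays; this is equally valid once one arranges a product collar at each gluing circle, and it is worth noting that the two constructions produce genuinely different foliations (yours has long leaves, the paper's has only circles and lines). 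Your justification of the core foliation via Poincar\'e--Hopf / pushing critical points into the puncture is sound and is exactly the Godbillon trick the paper itself invokes for Theorem \ref{open metrisable}; your first alternative (choosing a foliation on the doubled surface equivariantly and transverse to the mirror circles) is the one shaky step, since equivariance and transversality to prescribed curves are not automatic, but the direct vector-field/index argument you give afterwards makes that detour unnecessary. One small correction of emphasis: the puncture does not bring $\chi$ to a ``foliable value''; rather, as you say at the end, it makes the core an open surface, for which there is no index obstruction at all, and that is the whole point.
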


\begin{proof}
By sliding the boundary circles of $\Sigma_{g,n}$ along the tubes if necessary, one can assume the puncture to be located in the interior of the compact `nucleus'
$\Sigma_{g,n}$ of the surface ${\Lambda}_{g,n}$. It is
easy to see that $\Sigma_{g,n}-$(an interior point) carries a
foliation in which the boundary components are leaves, since the surface $\Sigma_{g,n}$ admits an almost planar model as a disk with $n-1$ holes to which $g$
handles are attached (see Figure \ref{FoliatedLambdagn}).
This foliation can be extended to ${\Lambda}_{g,n}-\ast$ by foliating the tubes with circles.
\end{proof}

\noindent {\bf Question.} Is there a surface which does not admit a foliation even after
puncturing?

\medskip
When the base manifold $M$ has no foliation of dimension $1$, then the manifold $M\times {\mathbb L}_+$ behaves more cannibalistically forcing the leaves to fall into the hole in a purely vertical way:

\begin{cor}\label{cannibal}
Let $\mathcal F$ be a dimension-one foliation on $\mathbb S^2\times\mathbb L_+$. Then there is $\alpha\in\mathbb L_+$ so that $\mathcal F$ restricted to $\mathbb S^2\times(\alpha,\omega_1)$ is the trivial product foliation by long rays. More generally the same conclusion holds when $\mathbb S^2$ is replaced by any closed (topological) manifold with non vanishing Euler characteristic.
\end{cor}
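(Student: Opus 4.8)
The plan is to reduce the statement to Theorem \ref{fall in}. First I would note that $\mathbb S^2$ is compact, hence metrisable, hence both squat (by Lemma \ref{Mathieu}, since a compact Hausdorff manifold is first countable and Lindel\"of) and separable. So Theorem \ref{fall in} applies to any dimension-one foliation on $\mathbb S^2\times\mathbb L_+$ \emph{provided we can exhibit at least one long leaf}; once a long leaf is present, Theorem \ref{fall in} immediately gives the desired $\alpha$ above which $\mathcal F$ is the trivial vertical product foliation. Thus the entire content of the corollary is the dichotomy: either there is a long leaf, or there isn't — and I must rule out the second possibility.

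So suppose for contradiction that \emph{all} leaves of $\mathcal F$ are short, i.e. metrisable (homeomorphic to $\mathbb S^1$ or $\mathbb R$). Then I would apply Proposition \ref{saturate} to the exhaustion $\mathbb S^2\times\mathbb L_+=\bigcup_{\alpha\in\omega_1}\mathbb S^2\times(0,\alpha)$, in which each $U_\alpha=\mathbb S^2\times(0,\alpha)$ is separable and open, the closures nest correctly, and the limit condition holds. Proposition \ref{saturate} then yields a closed unbounded set $C\subset\omega_1$ of ordinals $\alpha$ for which $U_\alpha$ is saturated by $\mathcal F$; picking any $\alpha\in C$, the frontier $\overline{U_\alpha}-U_\alpha=\mathbb S^2\times\{\alpha\}$ is saturated, i.e. it is a union of leaves of $\mathcal F$. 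But this means the restriction of $\mathcal F$ to the slice $\mathbb S^2\times\{\alpha\}\cong\mathbb S^2$ is a dimension-one foliation of $\mathbb S^2$ — which is impossible, since $\mathbb S^2$ carries no dimension-one foliation (Euler characteristic $2\neq 0$; see Lemma \ref{Euler} in the Appendix). This contradiction shows a long leaf must exist, completing the argument for $\mathbb S^2$.

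For the general statement, the same proof goes through verbatim with $\mathbb S^2$ replaced by any closed manifold $M$ with $\chi(M)\neq 0$: such $M$ is compact, hence squat and separable, so Theorem \ref{fall in} still applies in the presence of a long leaf; and the Poincar\'e–Hopf / Lefschetz obstruction still forbids a dimension-one foliation of $M$ on the saturated slice $M\times\{\alpha\}$, since a dimension-one foliation would furnish a nowhere-vanishing line field and hence, passing to the orientation double cover if necessary, contradict $\chi(M)\neq 0$. The only subtle point — and the one I would state carefully rather than leave to the reader — is the passage from a \emph{line} field to the Euler-characteristic obstruction when $M$ is non-orientable or the line field is not coorientable; but for $\mathbb S^2$ this is immaterial and for the general case one invokes the double-cover trick, whose Euler characteristic is $2\chi(M)\neq 0$. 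I expect no real obstacle here: the whole corollary is essentially "$M\times\mathbb L_+$ has a long leaf because otherwise a slice would be a dimension-one foliation of $M$, forbidden by $\chi\neq 0$", and then Theorem \ref{fall in} finishes it.
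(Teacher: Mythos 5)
Your proposal is correct and follows essentially the same route as the paper: reduce to Theorem \ref{fall in} by producing a long leaf, and rule out the all-short-leaves case by applying Proposition \ref{saturate} to the exhaustion $\bigcup_{\alpha\in\omega_1}\mathbb S^2\times(0,\alpha)$ and invoking Lemma \ref{Euler} on a saturated slice. Your explicit attention to the non-orientable/non-coorientable case via the double cover is exactly the content of Lemma \ref{Euler} as stated in the Appendix, so nothing further is needed.
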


\begin{proof}
According to Theorem \ref{fall in} it suffices to show that there is a long leaf. If not, then by  Proposition~\ref{saturate} applied to $\cup_{\alpha\in\omega_1}\mathbb S^2\times(0,\alpha)$ there would have to be $\alpha\in\omega_1$ such that $\mathcal F$ restricts to a dimension-one foliation on $\mathbb S^2\times\{\alpha\}$. However $\mathbb S^2$ carries no dimension-one foliation (Lemma~\ref{Euler}). The more general statement follows by the same argument using the classical fact that a closed manifold $M$ with $\chi(M)\neq 0$ has no dimension-one foliation (see Lemma \ref{Euler}).
\end{proof}

It is not the case that up to equivalence there is a unique dimension-one foliation on $\mathbb S^2 \times {\mathbb L}_+$. For example one can perturb the ``radial'' foliation on $\mathbb S^2\times {\mathbb L}_+$ by long rays, along an ellipsoid of revolution touching the north and south poles of $\mathbb S^2 \times \{ 0\}$ equipped with a longitudinal motion from the south to the north pole (see Figure \ref{NonTrivialRadialFoliation}). The resulting foliation is clearly not equivalent to the trivial radial foliation since it has many short leaves (inside the ellipsoid, each being homeomorphic to ${\mathbb R}$).

\begin{figure}[h]
\centering
\scalebox{1.0}{\includegraphics{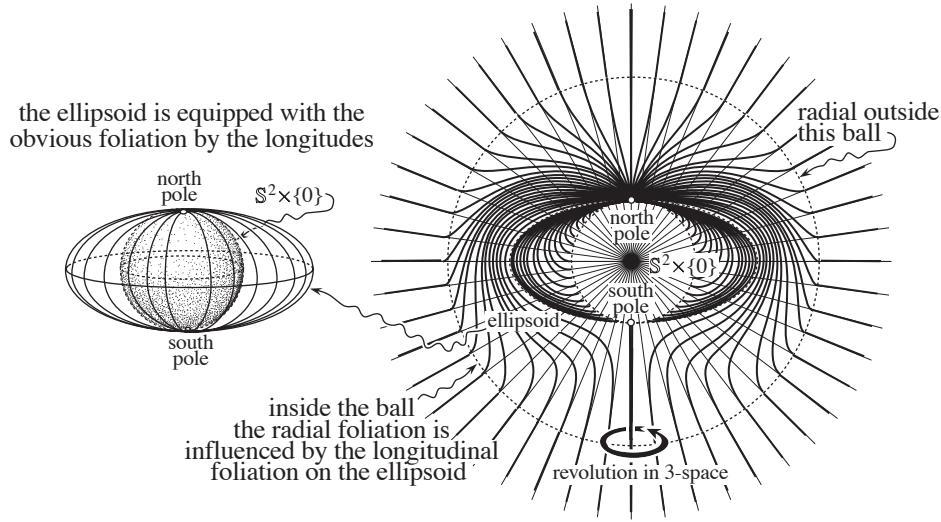}}
    \caption{\label{NonTrivialRadialFoliation} A non-trivial
    asymptotically radial foliation on ${\mathbb S}^2\times\mathbb L_+$}
\end{figure}

If we seek a foliation (still on $\mathbb S^2\times {\mathbb L}_+$) realising $\mathbb S^1$ as a leaf, we need only to alter slightly the construction above by equipping the ellipsoid with the foliation by latitudinal circles. This gives a foliation on $\mathbb S^2\times {\mathbb L}_+$ exhibiting all topological types of one-manifold as leaves except for the long line. (Inside the ellipsoid the leaves are lines $\mathbb R$ spiraling
asymptotically to a latitude, on the ellipsoid we have
circular leaves, and outside the ellipsoid we see long rays
spiraling in general towards a latitudinal circle.)

Actually the long line cannot be realised as a leaf of a foliation on $\mathbb S^2 \times {\mathbb L}_+$. Indeed by ultimate verticality (Corollary \ref{cannibal}) there would be an induced foliation on some truncation $\mathbb S^2\times(0,\alpha]$ transverse to the boundary. Since $\mathbb S^2\times(0,\alpha]$ is simply connected the foliation is orientable and so associated to a flow by a classical result of Whitney~\cite{Whitney33}. Since the boundary is connected a standard ``closed-open set argument'' shows that the flow can be assumed to be pointing inward everywhere along the boundary. Hence there cannot be a long-line leaf, because its restriction to $\mathbb S^2\times(0,\alpha]$ would be an arc joining two boundary-points, which when oriented would point inward at one point and outward at the other, contradicting the global inwardness of the flow.

Obviously the same argument applies to any ultimately vertical foliation on $M\times {\mathbb L}_+$, because one can always lift the foliation to the universal cover $\widetilde{M}\times{\mathbb L}_+$ to ensure the orientability of the foliation. Since long-line leaves are conserved when lifting to a cover, we obtain the following addendum to Theorem \ref{fall in}:

\begin{prop}
Under the assumptions of Theorem \ref{fall in}, no leaf can be a long line.
\end{prop}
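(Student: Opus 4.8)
The plan is to mimic the ``closed-open argument'' sketched just before the statement, but to make precise how one reduces to the simply connected case. Given a foliation $\mathcal F$ on $M\times\mathbb L_+$ satisfying the hypotheses of Theorem \ref{fall in}, Theorem \ref{fall in} itself tells us that $\mathcal F$ is ultimately vertical, so there is $\alpha\in\mathbb L_+$ such that $\mathcal F$ restricted to $M\times(\alpha,\omega_1)$ is the trivial product foliation by long rays. Suppose for contradiction that some leaf $L$ of $\mathcal F$ is a long line. First I would argue that $L$ must eventually escape into the ultimately vertical region: since $L\approx\mathbb L$ is long and $\mathbb L$ has two ``ends'', while $M\times[0,\alpha]$ is squat (being a product of squat separable manifolds, or simply because $\mathcal F$ is the product foliation beyond height $\alpha$), $L$ cannot remain bounded, so it meets $M\times(\alpha,\omega_1)$; there it coincides with a vertical segment $\{x\}\times(\beta,\omega_1)$ for some $x\in M$ and $\beta>\alpha$. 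But a long line has \emph{two} noncompact ends each homeomorphic to $\mathbb L_+$, and both of them must be absorbed vertically in this way — yet a single vertical leaf $\{x\}\times(\beta,\omega_1)$ accounts for only one such end. So in fact the other end of $L$ stays in the bounded part $M\times[0,\beta']$ for some $\beta'$, which is impossible as that region is squat and cannot contain an unbounded image of $\mathbb L_+$. Hence the contradiction already appears at this stage, and the truncation-and-flow argument is only needed to handle the degenerate possibility that \emph{both} ends of $L$ run up to the top through the \emph{same} vertical line, i.e. $L$ wraps around; but a leaf is injectively immersed and $\{x\}\times(\beta,\omega_1)$ is embedded, so this cannot happen either.

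Alternatively — and this is the argument the paper evidently intends — I would run the truncation argument directly. Lift $\mathcal F$ to the universal cover $\widetilde M\times\mathbb L_+$ (long-line leaves lift to long-line leaves since $\mathbb L$ is simply connected, so it suffices to rule them out upstairs), so we may assume $M$ is simply connected. By ultimate verticality choose $\alpha$ so that $\mathcal F$ is the product foliation on $M\times(\alpha,\omega_1)$; then consider the truncation $X=M\times(0,\alpha]$, whose restricted foliation $\mathcal F|_X$ is transverse to the boundary $M\times\{\alpha\}$ (the leaves above $\alpha$ are vertical) and has boundary connected (as $M$ is connected). Since $X$ is simply connected the foliation $\mathcal F|_X$ is orientable, hence by Whitney's theorem \cite{Whitney33} it is the orbit foliation of a nonsingular flow. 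A standard clopen-set argument on the connected boundary $M\times\{\alpha\}$ shows the flow may be taken to point inward everywhere along $M\times\{\alpha\}$.

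The key step is then the following: a long-line leaf $L$ of $\mathcal F$, when intersected with the truncation $X$, yields (a union of) arcs each of whose two endpoints lie on $\partial X=M\times\{\alpha\}$ — indeed each noncompact end of $L$ is absorbed vertically above height $\alpha$, so chopping at height $\alpha$ turns the long line into flow-arcs with both endpoints on the boundary. Orienting such a flow-arc by the flow, it exits the boundary at one endpoint and enters at the other, so at one of the two endpoints the flow points \emph{outward}, contradicting global inwardness. This is precisely the contradiction, and it finishes the proof.

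The main obstacle I anticipate is the bookkeeping around ``$L\cap X$ is an arc with both endpoints on $\partial X$'': one must check that a long line, being absorbed vertically at both ends above the level $\alpha$, really does return to the boundary $M\times\{\alpha\}$ transversally and cannot, for instance, limit onto the boundary without meeting it, or meet it in a more complicated set than finitely many transverse points. This is handled by noting that above height $\alpha$ the foliation is literally the product by long rays, so each end of $L$ is (a final segment of) a vertical ray $\{x\}\times[\gamma,\omega_1)$ which crosses $M\times\{\alpha'\}$ transversally at the single point $(x,\alpha')$ for every $\alpha'>\alpha$; pushing down to height exactly $\alpha$ via the product structure, $L$ meets $M\times\{\alpha\}$ transversally, and the component of $L\cap X$ containing that crossing is the desired arc. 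The rest — orientability of $\mathcal F|_X$ from simple connectivity, Whitney's flow, and the clopen argument making the flow inward — is routine and already invoked verbatim in the discussion preceding the proposition for the $\mathbb S^2$ case.
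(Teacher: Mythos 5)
Your first argument contains a genuine error. From the fact that one end of the long-line leaf $L$ is absorbed by a vertical ray $\{x\}\times(\beta,\omega_1)$ you conclude that the other end must ``stay in the bounded part'' and hence violate squatness; but nothing forces this. Above the level $\alpha$ of ultimate verticality, $L\cap\bigl(M\times(\alpha,\omega_1)\bigr)$ is a union of whole leaves of the restricted product foliation, i.e.\ of whole vertical rays, and a long line has room for exactly two disjoint open subsets homeomorphic to $\mathbb L_+$ (a final segment of each of its two ends). So the generic and perfectly consistent picture is that the two ends of $L$ escape vertically above two \emph{distinct} points $x_1\neq x_2$ of $M$, joined by a compact arc in $M\times(0,\alpha]$. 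This configuration is not excluded by squatness, nor by injectivity of the immersion; it is precisely the case the truncation-and-flow argument exists to rule out. The ``degenerate possibility'' you relegate to that argument (both ends in the same vertical line) is the one that is trivially impossible, while the case you dismiss is the main one. So the contradiction you claim ``already appears at this stage'' does not exist.

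Your second, ``alternative'' argument is correct and is exactly the paper's proof: lift to the universal cover $\widetilde M\times\mathbb L_+$ (long-line leaves persist since $\mathbb L$ is simply connected), truncate at a level $\alpha$ above which the foliation is the vertical product, use simple connectivity to get orientability and hence a Whitney flow, make it point inward along the connected boundary by the clopen argument, and observe that a long-line leaf meets the truncation in a compact arc with both endpoints on $M\times\{\alpha\}$, at one of which the flow must point outward. Your bookkeeping that each end of $L$ genuinely crosses $M\times\{\alpha\}$ transversally --- because above $\alpha$ that end literally coincides with a vertical ray --- is the right way to justify the ``arc joining two boundary points'' step, which the paper leaves implicit. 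Had you led with this argument and omitted the first, the proposal would match the paper essentially verbatim.
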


\begin{cor} \label{longglass}
The $n$-dimensional long glass $\Lambda^n={\Bbb B}^n\cup_{\partial}({\Bbb S}^{n-1}\times {\Bbb L}_{\ge 0})$ has no dimension-one foliation for each $n\ge 2$.
\end{cor}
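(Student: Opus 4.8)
The plan is to argue by contradiction: suppose $\Lambda^n$ carries a dimension-one foliation $\mathcal F$. The long glass $\Lambda^n$ decomposes as $\mathbb B^n \cup_\partial (\mathbb S^{n-1}\times\mathbb L_{\ge 0})$, and the ``tail'' $\mathbb S^{n-1}\times\mathbb L_+$ is exactly a product of the form $M\times\mathbb L_+$ with $M=\mathbb S^{n-1}$, which is compact, hence squat (by Lemma \ref{Mathieu}, being metrisable) and separable. So Theorem \ref{black hole}'s generalisation — really Theorem \ref{fall in} together with Proposition \ref{saturate} — applies to the restriction of $\mathcal F$ to the tail. First I would dispose of the sub-case where $\mathcal F$ has a long leaf inside the tail: then by Theorem \ref{fall in} the foliation is ultimately vertical on $\mathbb S^{n-1}\times(\alpha,\omega_1)$, i.e. eventually the product foliation by long rays; in particular for $\beta>\alpha$ the sphere $\mathbb S^{n-1}\times\{\beta\}$ is transverse to $\mathcal F$ and the foliation restricted to the compact piece $\Lambda^n$ cut at height $\beta$ — namely $\mathbb B^n\cup_\partial(\mathbb S^{n-1}\times[0,\beta])\approx \mathbb B^n$ — is a dimension-one foliation of a closed $n$-ball transverse to its boundary sphere.

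The remaining sub-case is that $\mathcal F$ has no long leaf in the tail; then by Proposition \ref{saturate} applied to $\mathbb S^{n-1}\times\mathbb L_+=\bigcup_{\alpha<\omega_1}\mathbb S^{n-1}\times(0,\alpha)$, the set $C$ of $\alpha$ for which $\mathbb S^{n-1}\times(0,\alpha)$ is saturated is closed unbounded, so we may pick $\alpha\in C$, and then $\mathbb S^{n-1}\times\{\alpha\}$ is a union of leaves — but leaves are $1$-dimensional, so $\mathbb S^{n-1}$ would carry a dimension-one foliation. For $n\ge 3$ this already contradicts $\chi(\mathbb S^{n-1})\ne 0$ (Lemma \ref{Euler}); for $n=2$ one instead has $\mathbb S^1\times\{\alpha\}$ itself a single leaf, and again truncating at $\alpha$ produces a foliation of $\mathbb B^2$ with the boundary circle as a leaf. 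In all surviving cases we are reduced to: \emph{the closed ball $\mathbb B^n$ admits a dimension-one foliation for which the boundary sphere is either a leaf or transverse}. This is impossible — it is the content of Proposition \ref{prop7.6} referred to in the introduction — the obstruction again being $\chi(\mathbb B^n)=1\ne 0$ (via the doubling trick of Corollary \ref{non-foliable surfaces}, or via a direct index argument on the associated flow/line field using Lemma \ref{Euler}): capping or doubling along the boundary sphere yields a dimension-one foliation on $\mathbb S^n$ (or on a closed manifold with nonzero Euler characteristic), which cannot exist.

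The main obstacle I expect is the bookkeeping needed to transfer the hypotheses of Section \ref{Black holes} to $\Lambda^n$ cleanly — in particular confirming that $\mathbb S^{n-1}$ is squat and separable so that Theorem \ref{fall in} genuinely applies, and handling the low-dimensional case $n=2$ (where ``$\mathbb S^1$ has no dimension-one foliation'' is false, so one must push to the circular-leaf/transverse-circle argument and then to the $2$-ball) in a way uniform with $n\ge 3$. Once the reduction to ``$\mathbb B^n$ is not foliable (rel boundary)'' is in place, the conclusion is immediate from Lemma \ref{Euler} exactly as in the proof of Corollary \ref{non-foliable surfaces}. No genuinely new idea is required beyond combining the black-hole dichotomy with the Euler-characteristic obstruction for the nucleus.
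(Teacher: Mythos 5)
Your overall strategy is exactly the paper's: restrict $\mathcal F$ to the pipe $\mathbb S^{n-1}\times\mathbb L_+$, split according to whether there is a long leaf (Theorem \ref{fall in}: ultimately vertical, truncate to get a foliation of $\mathbb B^n$ transverse to its boundary) or only short leaves (Proposition \ref{saturate}: a saturated collar, truncate to get a foliation of $\mathbb B^n$ tangent to its boundary), and in either case contradict the non-foliability of the ball. However, your handling of the short-leaf case contains a concrete error: you claim that for $n\ge 3$ the induced dimension-one foliation on $\mathbb S^{n-1}\times\{\alpha\}$ already contradicts $\chi(\mathbb S^{n-1})\ne 0$. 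This is false whenever $n$ is even, since then $n-1$ is odd and $\chi(\mathbb S^{n-1})=0$; indeed $\mathbb S^3$ carries the Hopf foliation (a fact the paper itself uses when discussing $\mathbb S^3\times\mathbb L_+$), so for $n=4$ your Case~2 produces no contradiction at all at that step. The same parity issue undermines your closing parenthetical that doubling yields a foliation of $\mathbb S^n$ with nonzero Euler characteristic: $\chi(\mathbb S^n)=0$ for $n$ odd.

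The repair is already implicit in your own text and is what the paper does: in the short-leaf case, saturation of $\mathbb S^{n-1}\times(0,\alpha)$ forces the entire compact nucleus $\mathbb B^n\cup_\partial(\mathbb S^{n-1}\times[0,\alpha])$ to be saturated (a connected leaf leaving the nucleus would have to pass through, hence be trapped in, the saturated collar), so for every $n\ge 2$ one obtains a dimension-one $C^0$-foliation of a closed $n$-ball tangent to its boundary; together with the transverse ball from the long-leaf case, both are killed uniformly by Proposition \ref{prop7.6} (whose proof treats the odd- and even-dimensional balls separately, precisely because the naive Euler-characteristic argument only covers half the dimensions; the transverse case rests on Whitney's flow plus Brouwer rather than on $\chi$). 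So route all of Case~2 through the saturated-nucleus reduction rather than through $\chi(\mathbb S^{n-1})$, and cite Proposition \ref{prop7.6} rather than a doubling argument for the ball; with those two corrections your proof coincides with the paper's.
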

\begin{proof} By contradiction assume ${\cal F}$ is such a
foliation. We restrict ${\cal F}$ to the ``pipe'' $P:={\Bbb
S}^{n-1}\times {\Bbb L}_+$.
If the restricted foliation ${\cal F}_P$ contains a long leaf,
then ${\cal F}_P$ is ultimately vertical by Theorem~\ref{fall
in}, say after some $\alpha\in {\Bbb L}_+$. Restricting to the
compact subregion ${\Bbb B}^n\cup_{\partial}({\Bbb
S}^{n-1}\times [0,\alpha])$ yields a foliation on (a
homeomorph of) the ball ${\Bbb B}^n$ transverse to its
boundary. Such a foliation would imply a flow
(Whitney~\cite{Whitney33}) entrant throughout the connected
boundary sphere, contradicting the Brouwer fixed point
theorem.
Otherwise ${\cal F}_P$ contains only short leaves. Then
according to Proposition~\ref{saturate} there is $\alpha\in
\omega_1$ such that ${\Bbb S}^{n-1}\times (0, \alpha)$ is
saturated by ${\cal F}_P$. This implies that ${\Bbb
B}^n\cup_{\partial}({\Bbb S}^{n-1}\times [0,\alpha])$ is
saturated by ${\cal F}$, yielding again an impossible
foliation on the ball.
\end{proof}

The most general statement that can be deduced conjointly from Theorem \ref{fall in} and Proposition \ref{saturate} is:

\begin{cor}
Let $\mathcal F$ be a dimension-one foliation on $M\times\mathbb
L_+$, where $M$ is a manifold that is squat, separable and
without dimension-one foliations. Then there is
$\alpha\in\mathbb L_+$ so that $\mathcal F$ restricted to
$M\times(\alpha,\omega_1)$ is the trivial product foliation by
long rays.
\end{cor}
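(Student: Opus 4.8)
The plan is to reduce the statement immediately to the two pillars already in place, Theorem~\ref{fall in} and Proposition~\ref{saturate}, organising the argument around the dichotomy: either $\mathcal F$ has a long leaf, or every leaf of $\mathcal F$ is short. In the first case there is nothing to do beyond quoting Theorem~\ref{fall in}: since $M$ is squat and separable by hypothesis, that theorem delivers an $\alpha\in\mathbb L_+$ above which $\mathcal F$ is the trivial product foliation by long rays (the hypothesis that $M$ carries no dimension-one foliation plays no role in this branch).

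The content lies in excluding the complementary case, where all leaves of $\mathcal F$ are short, hence metrisable $1$-manifolds. Here I would apply Proposition~\ref{saturate} to the exhaustion $M\times\mathbb L_+=\bigcup_{\alpha\in\omega_1}U_\alpha$ with $U_\alpha=M\times(0,\alpha)$, checking its hypotheses: each $U_\alpha$ is open in $M\times\mathbb L_+$ and separable, being the product of the separable manifold $M$ with the countable, hence separable, ordinal interval $(0,\alpha)$; one has $\overline{U_\alpha}\subset M\times(0,\beta)=U_\beta$ whenever $\alpha<\beta$; and $U_\lambda=\bigcup_{\alpha<\lambda}U_\alpha$ at limit stages. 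Since all leaves are metrisable, Proposition~\ref{saturate} produces a closed unbounded set $C\subset\omega_1$ of ordinals $\alpha$ for which $U_\alpha$ is $\mathcal F$-saturated, with $\overline{U_\alpha}-U_\alpha$ saturated too. Choosing $\alpha\in C$ to be a limit ordinal -- legitimate, since a closed unbounded subset of $\omega_1$ contains limit ordinals cofinally -- this frontier is exactly the slice $M\times\{\alpha\}$, so that $M\times\{\alpha\}$ is saturated by $\mathcal F$.

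To close the argument I would invoke the fact that a saturated codimension-one submanifold of a foliated manifold inherits a dimension-one foliation: the leaves meeting $M\times\{\alpha\}$ lie entirely within it, hence are tangent to it, so reading $\mathcal F$ through foliated charts centred at points of $M\times\{\alpha\}$ yields foliated charts for a dimension-one foliation on $M\times\{\alpha\}\cong M$, contradicting the standing hypothesis that $M$ supports none. Hence the short-leaves-only case cannot occur, and we are always in the first branch, which proves the corollary. The whole argument is essentially bookkeeping modulo Theorem~\ref{fall in}, Proposition~\ref{saturate} and the classification of $1$-manifolds -- it runs parallel to the proof of Corollary~\ref{cannibal}, with ``$\mathbb S^2$ has no dimension-one foliation'' replaced by the hypothesis on $M$. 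The one step I expect to need actual care, rather than a one-line assertion, is the passage from ``$M\times\{\alpha\}$ is $\mathcal F$-saturated'' to ``$M$ carries a dimension-one foliation'', i.e.\ the inheritance of a foliation by the saturated slice.
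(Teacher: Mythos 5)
Your proposal is correct and takes essentially the same approach the paper intends: the corollary is stated there as the conjoint consequence of Theorem~\ref{fall in} and Proposition~\ref{saturate}, and your dichotomy (a long leaf exists, so Theorem~\ref{fall in} applies; otherwise all leaves are short, hence metrisable, and Proposition~\ref{saturate} yields a saturated slice $M\times\{\alpha\}$ carrying an induced dimension-one foliation, contradicting the hypothesis on $M$) is exactly the argument of Corollary~\ref{cannibal} with $\mathbb S^2$ replaced by $M$. The only cosmetic point is that $\overline{U_\alpha}-U_\alpha=M\times\{\alpha\}$ for every $\alpha$, so your restriction to limit ordinals of $C$ is unnecessary (though harmless).
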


\begin{rem}
{\rm Unfortunately we do not know any example of such an $M$ which is not compact! We suspect that a strong candidate is a mixed Pr\"ufer--Moore surface, i.e. we start from the Pr\"ufer surface with boundary $P$, auto-glue by $x\sim -x$ some of the boundaries of $P$ and then take the double. By the way it would be interesting to find an example of a non-foliable surface which is separable (without being compact).}
\end{rem}

We now make some remarks deploring our poor knowledge of codimension-one foliations on $\mathbb S^2\times {\mathbb L}_+$.

\begin{ques} \label{foliate S^2xL_+}
Is it true that any codimension-one foliation on $\mathbb S^2\times\mathbb L_+$ has a spherical leaf (i.e. homeomorphic to $\mathbb S^2$.)
\end{ques}

The answer is {\it yes} under the extra assumption that all leaves are bounded (apply Proposition \ref{saturate}).

If the answer is yes then it would follow from Reeb's stability theorem \cite{Reeb} that the foliation ultimately consists only of spherical leaves. Nevertheless there may well be a toral leaf as illustrated in Figure \ref{ToralLeaf}.

\begin{figure}[h]
\centering
\scalebox{0.9}{\includegraphics{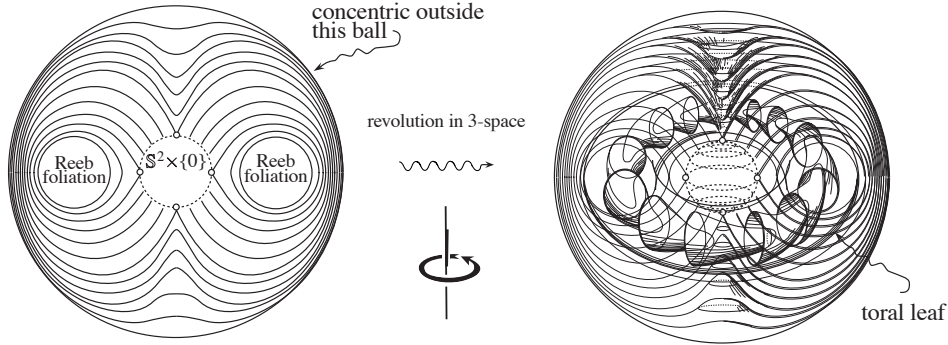}}
    \caption{\label{ToralLeaf} A foliation on
    ${\mathbb S}^2\times\mathbb L_+$ with a toral leaf}
\end{figure}

Are compact leaves of higher genus $g\ge 2$ precluded? As weak positive evidence we have the following:

\begin{prop} In case of a positive answer to Question \ref{foliate S^2xL_+}, there cannot be leaves of genus $\ge 2$.
\end{prop}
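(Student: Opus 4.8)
The plan is to reduce to Reeb's global stability theorem on a compact cobordism cut out between the hypothetical genus-$\geq 2$ leaf and a spherical leaf near the black hole. Assume the answer to Question \ref{foliate S^2xL_+} is positive, so that (as noted just above) $\mathcal F$ is \emph{ultimately spherical}: there is $\alpha_0\in\omega_1$ such that $\mathcal F$ restricted to $\mathbb S^2\times(\alpha_0,\omega_1)$ consists only of spherical leaves and, by Reeb stability, is a product foliation by essential $2$-spheres. Suppose for contradiction that some leaf $T$ has genus $g\geq 2$. Being a closed surface, $T$ is compact, hence lies in $\mathbb S^2\times[0,\beta]$ for some $\beta<\alpha_0$; moreover $T$ is orientable, hence two-sided in the orientable manifold $\mathbb S^2\times\mathbb L_+$.

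First I would cut out a compact region. Since $\mathbb S^2\times\mathbb L_+$ is simply connected, the two-sided surface $T$ separates it; let $B$ be the component of the complement that contains the long tail. Using the product-by-spheres structure above $\alpha_0$, choose a spherical leaf $S\subset B\cap\bigl(\mathbb S^2\times(\alpha_0,\alpha_0+1)\bigr)$ that is essential, so that $S$ too separates $\mathbb S^2\times\mathbb L_+$ with $T$ lying in the lower complementary region. Let $W$ be the component of $\mathbb S^2\times\mathbb L_+\smallsetminus(T\cup S)$ whose closure meets both $T$ and $S$. Then $\overline W$ is a $3$-manifold with $\partial\overline W=T\sqcup S$, it is contained in a compact slab $\mathbb S^2\times[\gamma_0,\gamma_1]$ (hence compact), and $\mathcal F$ restricts to a foliation of $\overline W$ tangent to $\partial\overline W$ whose germ along $S$ is a product by spheres.

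Then I would invoke Reeb's global stability theorem. Passing if necessary to the double cover of $\overline W$ on which the foliation is transversely orientable, we have a codimension-one foliation of a compact $3$-manifold, tangent to the boundary, possessing the compact leaf $S\approx\mathbb S^2$ with $H^1(S;\mathbb R)=0$. Reeb's theorem then forces every leaf to be compact and to be either homeomorphic to $\mathbb S^2$ or a twofold quotient of it, i.e.\ $\mathbb S^2$ or $\mathbb{RP}^2$. In particular $T$ would be $\mathbb S^2$ or $\mathbb{RP}^2$, contradicting $g\geq 2$.

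The step that needs real care — and which I expect to be the main obstacle — is the compactness of $\overline W$ in the second paragraph. This can fail only if the side of $T$ away from $S$ dips below $T$, that is, if $T$ bounds a compact region $A$ of $\mathbb S^2\times\mathbb L_+$ meeting neither end (a ``bubble''). In that situation $\overline A$ is itself a compact foliated $3$-manifold with boundary leaf $T$; since a $2$-sphere in $\mathbb S^2\times\mathbb L_+$ bounding a compact region that touches no end must bound a $3$-ball, $\overline A$ is irreducible and so carries no spherical leaf (recall one cannot foliate $\mathbb B^3$ with its boundary a leaf, cf.\ Proposition \ref{prop7.6}), so Reeb stability cannot be fed in from inside $\overline A$. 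Ruling out this bubble configuration — for instance by combining irreducibility of $\overline A$ with the positive answer to Question \ref{foliate S^2xL_+} applied to suitable exhausting pieces — is the delicate point; the remaining ``essential'' case is exactly the argument above.
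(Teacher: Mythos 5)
Your main line of argument (cut between the genus-$g$ leaf and a spherical leaf, get a compact foliated cobordism tangent to the boundary, and invoke Reeb's global stability) is essentially the paper's first case, and it works. But you have correctly diagnosed, and then left open, a genuine gap: the ``bubble'' configuration in which the metrisable side of the genus-$g$ leaf $T$ is a compact region $\overline A$ with $\partial\overline A=T$ not touching the short end. Your proposed escape route --- feeding the positive answer to Question \ref{foliate S^2xL_+} into exhausting pieces of $\overline A$ --- cannot work for exactly the reason you yourself note: $\overline A$ is irreducible, so it contains no spherical leaf and Reeb stability has nothing to latch onto. As written, the proof is incomplete.

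The paper closes this case by a completely different, and in fact unconditional, argument that uses no spherical leaf at all: an Euler characteristic count. Doubling the compact region gives $\chi(2\overline A)=2\chi(\overline A)-\chi(T)=0$ by Poincar\'e duality, whence $\chi(\overline A)=1-g\neq 0$ for $g\ge 2$. The induced codimension-one foliation on $\overline A$ is transversely orientable, being the restriction of a foliation on the simply connected $\mathbb S^2\times(0,\alpha)$, so by Whitney it admits a non-singular transverse flow which, the boundary $T$ being a connected leaf, may be taken to point inward everywhere along $\partial\overline A$ and hence to map $\overline A$ into itself. The Lefschetz fixed-point theorem applied to the dyadic times $t_n=2^{-n}$ (as in Lemma \ref{Euler}) then produces a common rest point of this non-singular flow --- a contradiction. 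You should graft this Euler--Lefschetz argument onto your bubble case; with that, your proof is complete, and your ``essential'' case remains a correct (slightly more explicit) version of the paper's other half.
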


\begin{proof} Clearly using homology theory any closed leaf $F$ splits its complement in $\mathbb S^2\times {\mathbb L}_+$ into two components, exactly one of which is metrisable. Call the (closure of the) latter the {\it inside of the leaf} $F$, denoted by ${\rm ins}(F)$. We distinguish two cases depending on whether the inside is open or not.

In the first case the region between the spherical leaf $S$ and
$F \cong \Sigma_g$ would be compact and this is impossible by
Reeb's stability. The precise meaning of ``the region between''
is the symmetrical difference ${\rm ins}(S) \bigtriangleup{\rm
ins}(F)$ of the insides. The compactness comes from the fact
that ${\rm ins} (S)$ is open as well, because otherwise
according to the Alexander--Schoenflies theorem ${\rm ins}(S)$
would be a 3-ball, but the latter has no foliation.

In the second case ${\rm ins}(F)=W$ would be a compact
3-manifold with boundary. Now doubling $W$, we have $\chi(2W)=2\chi(W)-\chi(\partial W)$ and the double $2W$ has zero Euler characteristic by
Poincar\'e duality so $\chi(W)=1-g$. Now consider
${\mathcal F}_{W}$ the induced foliation on $W$. If it is
transversely orientable then we can find a flow $(f_t)$ on
$W$ pointing inward (everywhere along the connected
boundary). This would violate the Lefschetz fixed-point
theorem. [Indeed if $t_n={1\over 2^n}$ denotes the dyadic time
then $K_n={\rm Fix}(f_{t_n})$ would be non-void by Lefschetz,
so in the nested intersection $\cap_{n=1}^{\infty} K_n$ one
finds a point at rest for all time of the flow. This
contradicts the fact that the orbits of the flow are exactly
the leaves of the foliation, none of which can
collapse to a single point.] The general case follows by noting that
the foliation being the restriction of a foliation defined on
the simply-connected manifold $\mathbb S^2\times(0,\alpha)$ for some
sufficiently large $\alpha$ is automatically transversely
orientable.
\end{proof}

\section{Foliating large subsets of $\mathbb L^2$} \label{Foliating L2-pt}

We begin this section by determining the asymptotic behaviour of dimension-one foliations on the long plane, possibly punctured by the removal of a compact subset. Up to certain rigid motions, only six possible pictures will emerge, as depicted in Figure \ref{foliateL^2-pt}. The key idea in detecting these six asymptotic structures is to cut $\mathbb L^2$ not along the axes but along the two diagonals. Doing so yields quadrants which, while not being themselves products, can be filled by strips such as $(-\alpha, \alpha)\times(\alpha,\omega_1)$ having a ``squat-long'' product decomposition to subordinate their foliation theory to the general methods of Section \ref{Black holes}. Note the special case where the puncture is just a single point: in that case we have foliated a second long pipe.

As shown in Figure \ref{foliateL^2-pt} all except some regions have prescribed foliations. In the second part of the section we investigate how these regions may be foliated. We are able to conclude that $\mathbb L^2$ has two foliations up to homeomorphism and six up to isotopy.

\begin{prop}\label{quadrant constraint} 
Let $K$ be a compact subset of $\mathbb L^2$ and suppose that $\mathcal F$ is a dimension-one foliation on $\mathbb L^2-K$. Within the quadrant $Q=\{(x,y)\in\mathbb L^2:-y<x<y\}$ exactly one of the following must hold:
\begin{itemize}
\item $\{\alpha\in\mathbb L_+:Q\cap\left([-\alpha,\alpha]\times\{\alpha\}\right)\mbox{ is part of a leaf of } \mathcal F \}$ is  a closed unbounded subset of $\mathbb L_+$;
\item $\{\alpha\in\mathbb L_+: \{x\}\times[\alpha,\omega_1)\mbox{ is part of a leaf of } \mathcal F  \mbox{ for each } x\in[-\alpha,\alpha]\}$ is  a closed unbounded subset of $\mathbb L_+$.
\end{itemize}
\end{prop}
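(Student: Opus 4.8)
The plan is to reduce the statement to Theorem~\ref{black hole} (the $\mathbb S^1\times\mathbb L_+$ dichotomy) by exhibiting the quadrant $Q$, at least asymptotically, as a manifold to which the black-hole machinery of Section~\ref{Black holes} applies. First I would deal with the puncture: since $K$ is compact, it is $\mathbb L_+$-bounded in each coordinate, so there is $\beta\in\mathbb L_+$ with $K\subset [0,\beta)^2$ (using the standard fact that a compact subset of $\mathbb L_+$ is bounded); hence the ``upper part'' of the quadrant, $Q_\beta:=Q\cap(\mathbb L^2-[0,\beta)^2)$, is disjoint from $K$ and carries the restricted foliation. The point of cutting along the diagonals rather than the axes is that $Q$ is bordered by the two rays $y=\pm x$, and the slices $[-\alpha,\alpha]\times\{\alpha\}$ are compact arcs whose length grows with $\alpha$; this is exactly what lets one fill $Q_\beta$ by the ``strips'' $S_\alpha=(-\alpha,\alpha)\times(\alpha,\omega_1)$ alluded to in the text, each of which has the ``squat-long'' product form $(-\alpha,\alpha)\times\mathbb L_+$ with $(-\alpha,\alpha)\cong\mathbb R$ squat and separable.

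Next I would run the two cases of Theorem~\ref{fall in}/Theorem~\ref{black hole} on these strips. Fix one strip $S_\alpha\cong\mathbb R\times\mathbb L_+$ with $\alpha>\beta$. If the induced foliation on $S_\alpha$ has a long leaf, then by Theorem~\ref{fall in} it is ultimately vertical: there is $\gamma\ge\alpha$ so that $\mathcal F$ restricted to $\mathbb R\times(\gamma,\omega_1)\subset S_\alpha$ is the product foliation by long rays, which already gives the second alternative (on a cofinal set of heights $x\in(-\alpha,\alpha)$, $\{x\}\times[\gamma,\omega_1)$ is part of a leaf). If instead \emph{every} strip $S_\alpha$ has only short leaves, then Proposition~\ref{saturate}, applied to the exhaustion of $S_\alpha$ by $(-\alpha,\alpha)\times(\alpha,\delta)$, $\delta<\omega_1$, yields a closed unbounded set of heights $\delta$ for which the slice $(-\alpha,\alpha)\times\{\delta\}$ lies in the frontier of a saturated open set, hence is a union of plaques, i.e. part of a leaf; letting $\alpha$ vary and taking a diagonal intersection over a cofinal $\omega_1$-sequence of $\alpha$'s produces a single closed unbounded $C\subset\mathbb L_+$ witnessing the first alternative, where for $\delta\in C$ the \emph{whole} arc $Q\cap([-\delta,\delta]\times\{\delta\})$ is part of a leaf (since for $\delta>\alpha$ the arc $[-\alpha,\alpha]\times\{\delta\}$ is an interior sub-arc and one pushes $\alpha$ up to $\delta$).

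The remaining work is gluing the local conclusions into the two global ones and checking mutual exclusivity. For exclusivity: if the first set is unbounded then cofinally many horizontal slices are leaf-arcs, which prevents a cofinal set of vertical segments $\{x\}\times[\alpha,\omega_1)$ from being leaf-parts, because a long vertical leaf-segment would have to cross a horizontal leaf-arc transversally inside a foliated chart, contradicting that distinct plaques of one chart are disjoint; so at most one alternative holds. For ``at least one'': the dichotomy on each strip is forced, and the two regimes cannot be mixed across different strips $S_\alpha$ — if $S_\alpha$ is ultimately vertical but some larger $S_{\alpha'}$ has only short leaves (or vice versa) one again gets a transverse crossing of a vertical and a horizontal leaf-segment in a common foliated chart, impossible. \textbf{The main obstacle} I anticipate is precisely this bookkeeping: the strips $S_\alpha$ overlap and their ``eventual'' behaviour kicks in at heights $\gamma(\alpha)$ depending on $\alpha$, so one must argue that a single cofinal function controls them all and that the two alternatives are genuinely global (uniform in $x$) rather than merely holding strip-by-strip. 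This is handled by a closing-off / pressing-down argument on $\omega_1$ (closed unbounded sets are closed under countable, indeed diagonal, intersection), together with the incompatibility-of-crossings observation above, which rigidly forces the whole quadrant into one regime once it holds on a cofinal family of strips; the details are routine but need care with the order-theoretic limits at limit ordinals, exactly as in Step~3 of Theorem~\ref{fall in} and the closedness argument of Proposition~\ref{saturate}.
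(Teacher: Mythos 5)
Your proposal is correct and follows essentially the same route as the paper: fill the quadrant with squat--long strips $(-\alpha,\alpha)\times(\alpha,\omega_1)$, split on whether some strip carries a long leaf, apply Theorem~\ref{fall in} together with a closing-off limit over an increasing $\omega$-sequence of widths in the long case (your self-identified ``bookkeeping'' step, which is exactly the paper's construction of the sequence $\langle\alpha_n\rangle$ and its supremum), and apply Proposition~\ref{saturate} in the all-short case. The only cosmetic differences are that the paper invokes Proposition~\ref{saturate} once on the whole quadrant exhausted by the triangles $Q\cap(\mathbb L\times(0,\alpha))$ rather than strip-by-strip followed by a diagonal intersection, and that it settles the endpoints $x=\pm\alpha$ of the second alternative via Lemma~\ref{density} --- a small detail your sketch omits but which your limit argument absorbs.
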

\begin{proof}
Either no leaf of $\mathcal F$ meets $Q$ in an unbounded set, which we show leads to the first option, or there is a leaf whose intersection with $Q$ is unbounded, which will lead to the second option. We may assume that $K\subset [-\frac{1}{2},\frac{1}{2}]$.

The first case follows by applying Proposition \ref{saturate} to $Q=\cup_{\alpha>0}(Q\cap (\mathbb L\times(0,\alpha)))$, noting that bounded leaves are Lindel\"of and hence metrisable.

Unboundedness in the second case may be deduced from Theorem  \ref{fall in} as follows. We know there is at least one unbounded leaf in $Q$: choose one such leaf and denote it by $L$. Let $\alpha_0\in\omega_1$: we construct an increasing sequence $\langle\alpha_n\rangle$ as follows. Any leaf unbounded in $Q$ must have bounded first coordinate (otherwise an easy argument shows that it meets the boundary of $Q$ in a closed unbounded set) and hence, by Lemma \ref{Mathieu}, eventually has constant first coordinate. Hence we may assume that $\alpha_0$ is big enough that $L\subset Q\cap((-\alpha_0,\alpha_0)\times\mathbb L_+)$. Given $\alpha_n$, the manifold $(-\alpha_n,\alpha_n)$ satisfies the hypotheses of the manifold $M$ in Theorem \ref{fall in}. Hence there is $\alpha_{n+1}>\alpha_n$ such that $\mathcal F$ restricted to $(-\alpha_n,\alpha_n)\times(\alpha_{n+1},\omega_1)$ is the trivial product foliation by long rays. Letting $\alpha=\lim\alpha_n$ we find that $\alpha>\alpha_0$ and $\{x\}\times[\alpha,\omega_1)$ is part of a leaf of $\mathcal F$ for each $x\in(-\alpha,\alpha)$. It follows from Lemma \ref{density} that $\{x\}\times[\alpha,\omega_1)$ is part of a leaf of $\mathcal F$ for each $x\in[-\alpha,\alpha]$.

It is routine, using convergent sequences, to show that the set in the second case is closed.
\end{proof}

\begin{rem} \label{semi-diagonals not leaves}
{\rm It follows from Proposition \ref{quadrant constraint} that the semi-diagonals $\{(\pm x,\pm x) : x\in\mathbb L_+\}$ cannot be leaves of any dimension-one foliation on $\mathbb L^2-K$.} 
\end{rem}

Figure \ref{foliateL^2-pt} illustrates the six cases respectively of the following theorem. Arrowheads indicate that the leaf is long in the direction of the arrow. The foliation may be extended arbitrarily when not explicitly prescribed.

\begin{theorem} \label{foliate long plane}
Let $K$ be a compact subset of $\mathbb L^2$ and suppose that $\mathcal F$ is a dimension-one foliation on $\mathbb L^2-K$. Then there is a closed unbounded set $C\subset\mathbb L_+$ so that, up to rotation of the axes, appropriate leaves of $\mathcal F$ must take one of the following forms for each $\alpha\in C$:
\begin{enumerate}
\item $(\{\pm\alpha\}\times[-\alpha,\alpha])\cup([-\alpha,\alpha]\times\{\pm\alpha\})$;
\item $(\{\pm\alpha\}\times(-\omega_1,\alpha])\cup([-\alpha,\alpha]\times\{\alpha\})$;
\item $(\{\alpha\}\times(-\omega_1,\alpha])\cup((-\omega_1,\alpha]\times\{\alpha\})$;
\item $(-\omega_1,\omega_1)\times\{\alpha\}$ and $(-\omega_1,\omega_1)\times\{-\alpha\}$;
\item $(-\omega_1,\omega_1)\times\{\alpha\}$,  $((-\omega_1,-\alpha]\times\{-\alpha\})\cup(\{-\alpha\}\times(-\omega_1,-\alpha])$ and $([\alpha,\omega_1)\times\{-\alpha\})\cup(\{\alpha\}\times(-\omega_1,-\alpha])$;
\item $((-\omega_1,-\alpha]\times\{\alpha\})\cup(\{-\alpha\}\times[\alpha,\omega_1))$, $([\alpha,\omega_1)\times\{\alpha\})\cup(\{\alpha\}\times[\alpha,\omega_1))$,  $((-\omega_1,-\alpha]\times\{-\alpha\})\cup(\{-\alpha\}\times(-\omega_1,-\alpha])$ and $([\alpha,\omega_1)\times\{-\alpha\})\cup(\{\alpha\}\times(-\omega_1,-\alpha])$.
\end{enumerate}
Further where there are unbounded leaves as described above then $C$ may be chosen so that for any $\alpha\in C$, sets of the form $\{x\}\times[\alpha,\omega_1)$, or appropriate variants of them with coordinates interchanged or multiplied by $-1$, will lie entirely in one leaf of $\mathcal F$ whenever $x\in[-\alpha,\alpha]$.
\end{theorem}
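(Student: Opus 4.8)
The strategy is to reduce the analysis of $\mathcal F$ on $\mathbb L^2-K$ to the four quadrants determined by the two diagonals, apply Proposition \ref{quadrant constraint} in each quadrant separately, and then combine the four resulting dichotomies into a single coherent picture governed by one closed unbounded set $C$. First I would fix the four open quadrants $Q_N,Q_S,Q_E,Q_W$ obtained by cutting $\mathbb L^2$ along the lines $y=\pm x$, for instance $Q_N=\{(x,y):-y<x<y\}$ as in Proposition \ref{quadrant constraint} and the others by the three rotations of the plane by multiples of $90^\circ$ (which interchange coordinates and change signs, exactly the "appropriate variants" in the statement). Proposition \ref{quadrant constraint} gives, in each quadrant, a closed unbounded set on which either the "transverse-to-the-diagonal-cross-section" alternative holds or the "ultimately vertical" alternative holds. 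Since $\omega_1$ has the property that the intersection of finitely (indeed countably) many closed unbounded sets is closed unbounded, I would intersect the four sets to obtain a single closed unbounded $C_0\subset\mathbb L_+$ such that, for each $\alpha\in C_0$, a definite one of the two alternatives of Proposition \ref{quadrant constraint} is realised in each of the four quadrants; moreover, by pressing-down / pigeonhole on the countably many ordinals below a given element, the choice of alternative can be taken to be the same for all $\alpha\in C_0$ in each fixed quadrant (a leaf configuration cannot oscillate cofinally between the two options because each option, once it holds on an unbounded set, determines the germ of $\mathcal F$ near the corresponding end). This yields a vector in $\{\text{transverse},\text{vertical}\}^4$, one entry per quadrant.

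Next I would enumerate the a priori $2^4=16$ possibilities for this vector and collapse them under the dihedral symmetry group of order $8$ acting on $\mathbb L^2$ by the rotations and reflections fixing the pair of diagonals; this symmetry is exactly the "up to rotation of the axes" clause. The bookkeeping is: count how many of the four quadrants are in "vertical" mode. Zero vertical quadrants forces every diagonal cross-section to be transverse, which after rotating so that all four behave symmetrically is case (1) — the leaf near height $\alpha$ is the boundary square $(\{\pm\alpha\}\times[-\alpha,\alpha])\cup([-\alpha,\alpha]\times\{\pm\alpha\})$. One vertical quadrant, up to symmetry the "south" one (so leaves in $Q_S$ descend vertically toward $-\omega_1$), gives case (2). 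Two vertical quadrants split into two sub-cases according to whether the two vertical quadrants are adjacent (sharing a diagonal edge) or opposite: opposite gives case (4), where both the $y=\alpha$ and $y=-\alpha$ horizontal long lines appear as full leaves; adjacent gives case (3), a single "bent" leaf running vertically down one side and horizontally off to one side. Three vertical quadrants: up to symmetry the non-vertical one is "north", giving case (5). Four vertical quadrants gives case (6). In each case one must read off, from the local product-foliation description supplied by Theorem \ref{fall in} inside the vertical quadrants and from Proposition \ref{saturate} (closedness of $C$, saturation) inside the transverse ones, precisely which set the relevant leaf contains; the gluing of the quadrant-descriptions along the diagonals is forced because, by Remark \ref{semi-diagonals not leaves}, no semi-diagonal is itself a leaf, so a leaf entering a quadrant from a diagonal edge must, just inside, match the mode prescribed for that quadrant.

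For the final clause, I would note that in every quadrant that is in vertical mode, Proposition \ref{quadrant constraint} (second bullet) directly supplies, for $\alpha$ in the relevant closed unbounded set, that $\{x\}\times[\alpha,\omega_1)$ lies in one leaf for all $x\in[-\alpha,\alpha]$; applying the rotation that carries $Q_N$ to the quadrant in question turns this into the "appropriate variant with coordinates interchanged or multiplied by $-1$" statement. Taking $C$ to be the intersection of $C_0$ with the (countably many, hence still closed unbounded) sets from the second bullet of Proposition \ref{quadrant constraint} for each vertical quadrant, and then shrinking $C$ once more by deleting an initial segment so that $K\subset[-\alpha,\alpha]^2$ (equivalently $K\subset[-\tfrac12,\tfrac12]^2$) for $\alpha\in C$ puts the puncture safely inside the square, gives the desired single set $C$.

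\textbf{Main obstacle.} The genuinely delicate point is not the symmetry bookkeeping but verifying the compatibility of the four quadrant-analyses across the diagonal cuts — in particular that the "mode" (transverse vs. vertical) is eventually constant in each quadrant and that adjacent quadrants glue to give the globally defined leaves listed in (1)--(6) rather than some ragged hybrid. This requires showing that a leaf cannot cofinally alternate between crossing a diagonal transversally and running parallel to it, which is where one leans hardest on Remark \ref{semi-diagonals not leaves} together with the local triviality (Lemma \ref{tube}, Lemma \ref{density}) that propagates a vertical or transverse germ across the diagonal into the neighbouring quadrant; making this propagation argument airtight, uniformly in the four diagonal directions, is the crux.
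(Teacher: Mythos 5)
Your proposal is correct and follows essentially the same route as the paper: the authors' proof is precisely the two-line observation that one applies Proposition \ref{quadrant constraint} to the four diagonal quadrants, intersects the resulting closed unbounded sets (finitely many clubs intersect in a club), and fits the per-quadrant alternatives together along the semi-diagonals — your case count by the number of ``vertical'' quadrants (0, 1, 2 adjacent, 2 opposite, 3, 4) is exactly the intended enumeration of the six configurations. The only superfluous step is your pressing-down argument to stabilise the mode of each quadrant: Proposition \ref{quadrant constraint} already asserts that exactly one alternative holds per quadrant, so no oscillation is possible.
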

\begin{proof}
It is a matter of fitting together the four quadrants asymptotically foliated according to the two options of Proposition \ref{quadrant constraint}. In order to have a consistent patchwork along the four semi-diagonals, one must keep in mind that the intersection of finitely many closed unbounded sets is again closed and unbounded.
\end{proof}

\begin{figure}[h]
\centering
\scalebox{0.8}{\includegraphics{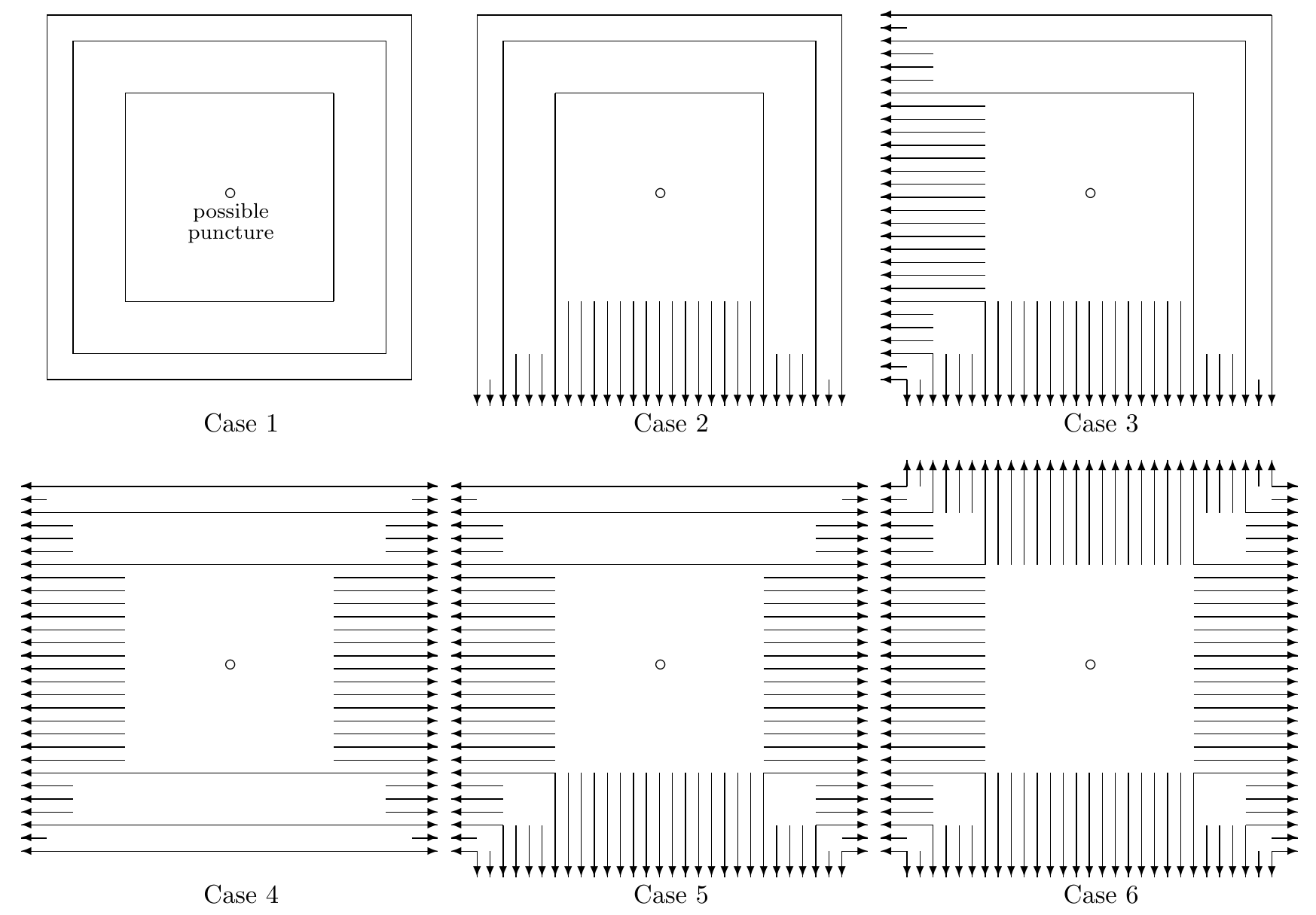}}
\caption{\label{foliateL^2-pt}Foliating ${\mathbb L}^2-\{\mbox{pt}\}$}
\end{figure}

\begin{rem}
{\rm In the first case of Theorem \ref{foliate long plane} we do not have complete freedom to foliate the concentric annuli. As we saw in Remark \ref{foliating cylinders}, there are essentially two ways to foliate an annulus with real lines and two circles so that the boundary components are leaves; see Figure \ref{KneserReebFoliation}. Because the set referred to in Case 1 is closed and unbounded it follows that outside any given square centred on the origin and containing the puncture we can have only finitely many of the annuli containing the Reeb foliation (there may, of course, be infinitely many converging towards the puncture depending on the form of the puncture).}
\end{rem}

We now address the question: how may the regions in which the foliation is not prescribed by Theorem \ref{foliate long plane} be filled. Our first observation is that the regions to be filled fall into seven different categories with or without punctures as follows; refer to Theorem \ref{foliate long plane} and Figure \ref{foliateL^2-pt}.
\begin{enumerate}
\setcounter{enumi}{-1}
\item An annulus each of whose boundary components is a leaf. This arises only in Case 1 (all regions except the central square).
\item A disc whose boundary is a leaf. This arises only as the central square in Case 1.
\item A square in which three sides are foliated by a single leaf while the fourth side is foliated transversely. This arises only as the central square in Case 2.
\item A square in which two adjacent sides are foliated by a single leaf while the other two sides are foliated transversely, but notice that the corner where the latter two sides meet is part of a leaf meeting this square only at that corner. This arises in Case 3 (the central square and bottom left regions), Case 5 (the bottom left hand and right hand regions) and Case 6 (all regions except the central square).
\item A square in which a pair of opposite sides are leaves while the remaining two sides are foliated transversely. This arises in Case 2 (all remaining regions), Case 3 (the top right hand regions), Case 4 (all regions) and Case 5 (the top regions).
\item A square in which one side is a leaf and each of the remaining sides is foliated transversely, again with special attention where two of these sides meet. This arises only in the central square of Case 5.
\item A square in which all four sides are foliated transversely, again with special attention where two of these sides meet. This arises only in the central square of Case 6.
\end{enumerate}

Let us denote these seven partially foliated regions by $C_i$, $i=0,\dots 6$, respectively. 

Possible foliations on $C_0$ and $C_1$ are well-known when there are no punctures: \cite[Prop. 4.2.12, p. 57; Thm 4.2.15, p. 59]{HectorHirschA} provides a complete classification of foliations on the closed annulus (see also \cite{HellmuthKneser24}), while the closed disc cannot be foliated.

\begin{prop}\label{no foliation has a circle}
None of the regions $C_i$ for $2\le i\le 6$ when punctured by removal of a singleton admits a foliated extension of its boundary data in which there occurs a circular leaf.
\end{prop}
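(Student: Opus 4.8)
The plan is to argue by contradiction: suppose some punctured region $C_i$ ($2 \le i \le 6$) admits a foliated extension of its prescribed boundary data containing a circular leaf $\gamma$. The region $C_i$ is a square with a single interior puncture, so it is homeomorphic to an annulus $\mathbb S^1 \times [0,1)$ with one boundary circle (corresponding to $\partial C_i$ with its decorations) and one ``open end'' surrounding the puncture; alternatively, and more usefully, we can think of $C_i \setminus \{\ast\}$ as $\overline{C_i} \setminus \{\ast\}$ where $\overline{C_i}$ is a closed disc. First I would invoke the Jordan curve theorem inside the sphere $\overline{C_i}/\partial$ (or simply in the plane containing the square): the circular leaf $\gamma$ bounds a disc $D$ on one side. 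Either the puncture $\ast$ lies in $D$ or it does not.

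If $\ast \notin D$, then $D$ is a genuine closed $2$-disc foliated by $\mathcal F$ (its boundary $\gamma$ being a leaf), which is impossible since the closed disc carries no dimension-one foliation — this is the classical fact already used in the proof of Corollary~\ref{non-foliable surfaces} and recorded via Lemma~\ref{Euler} (equivalently, Poincaré–Hopf / the Brouwer fixed point theorem applied to Whitney's flow, exactly as in the proof of Corollary~\ref{longglass}). So the only possibility is $\ast \in D$, i.e. the circular leaf encircles the puncture. Next I would observe that the other complementary region $D' = \overline{C_i} \setminus \operatorname{int}(D)$ is then a closed sub-square-with-a-bite, i.e. a compact surface-with-boundary whose boundary consists of $\gamma$ together with the original decorated boundary $\partial C_i$; topologically $D'$ is an annulus. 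The induced foliation on $D'$ has $\gamma$ as a leaf on one boundary circle, and on the other boundary circle it carries precisely the mixed tangent/transverse data coming from the definition of $C_i$.

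The heart of the argument — and the step I expect to be the main obstacle — is to derive a contradiction from the foliated annulus $D'$ with these boundary data. The key point is that for each $i$ in the range $2 \le i \le 6$ the outer boundary $\partial C_i$ contains at least one arc along which the foliation is prescribed to be \emph{transverse}, together with at least one corner (a vertex of the square) which, by construction, lies on a leaf that meets the square only at that corner — so that leaf is \emph{not} a boundary leaf and enters the interior. I would run the flow/index argument: since $D'$ is an annulus it is not simply connected, but I can pass to a suitable double or to the orientation cover to make the foliation transversely orientable, then realise it by a flow (Whitney~\cite{Whitney33}). On the boundary circle $\gamma$ the flow is tangent (as $\gamma$ is a leaf), while on the decorated outer circle the flow alternates between tangential arcs and transverse arcs, and at each ``special corner'' the flow must be transverse but pointing so as to be forced simultaneously inward along one adjacent side and outward along the other — exactly the local picture ruled out in the analysis preceding this proposition (cf. the long-line-leaf obstruction in the discussion before Proposition~\ref{longglass}). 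Counting the indices of the resulting singularities on the doubled closed surface, or equivalently applying the Poincaré–Hopf theorem to the constructed line field on the annulus relative to its prescribed boundary behaviour, yields a parity/Euler-characteristic contradiction: the boundary data of each $C_i$, $i \ge 2$, is precisely incompatible with the existence of an ``inner'' circular leaf. I would handle the five cases uniformly by recording, for each $i$, the number of tangential boundary arcs, transverse boundary arcs, and special corners, and checking that the alternating-tangency index count along $\partial D'$ cannot be balanced. The routine but slightly delicate part will be making the corner analysis precise — showing that a leaf passing through a single corner of the square forces a half-integer contribution to the boundary index that cannot be cancelled — but this is local and mirrors the corner discussion already given in the text.
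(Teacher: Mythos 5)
Your opening reduction is exactly the paper's: a circular leaf not enclosing the puncture would bound a foliated $2$-disc (impossible by Lemma~\ref{Euler} / Brouwer), so the circle must encircle the puncture, and one is left with the compact annular region $D'$ between the circle and the decorated outer boundary. From that point on, however, your argument is only a plan, and the step you yourself flag as ``the main obstacle'' is precisely the one that is never carried out. The entire content of the proposition is a numerical incompatibility between the boundary data of each $C_i$ and the existence of an inner circular leaf; you assert that ``the alternating-tangency index count along $\partial D'$ cannot be balanced'' but you neither state the index formula you intend to use (Poincar\'e--Hopf for a line field on a surface with boundary split into tangential and transverse arcs, with half-integer contributions at the switching points and at the degenerate corners where a leaf meets the square in a single point) nor perform the count in any of the five cases. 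Without those numbers the proof does not establish anything: for all the argument shows, one of the $C_i$ could have balanced boundary data. Setting up a correct corner-index formalism is itself nontrivial, especially at the ``special corners,'' where the local model is neither tangent nor transverse in the usual sense.

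For comparison, the paper avoids the index formalism altogether by a purely topological construction: it ``plumbs'' two copies of $W_i=D'$ together along the boundary data so that the mixed tangential/transverse behaviour becomes purely tangential, observes that the resulting planar surface is a disc with $g_i$ holes, doubles it to obtain a closed foliated surface of genus $g_i$, and then invokes Lemma~\ref{Euler} (a closed surface carries a dimension-one foliation only if its genus is $1$). The case-by-case work is then just counting holes in a picture: $g_2=2$, $g_3=3$, $g_4=3$, $g_5=4$, $g_6=5$, none equal to $1$. Your route could in principle be made to work and would encode the same arithmetic, but as written it is missing both the precise index formula with its corner conventions and the five explicit verifications, so there is a genuine gap.
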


\begin{proof} Start by observing that if there is a circular leaf then it must enclose
the puncture in its interior (otherwise we get a foliation on the 2-disc). After removing from $C_i$ the interior of this circular leaf, we obtain a compact surface-with-boundary $W_i$. Let us ``plumb'' together two copies of $W_i$ along the foliated boundary-data (see Figure~\ref{plumb}): the effect of this plumbing is to annihilate the mixed
transverso-tangential behaviour along the boundary to render it purely tangential. Observe that this ``plumbed-double'' denoted $\pi W_i$ is in each case a planar surface-with-boundary, hence a disc with a certain number of holes (say $g_i$). One can even take the double of $\pi W_i$, to obtain a closed surface of genus equal to $g_i$. By counting holes on Figure~\ref{plumb}, we find $g_2=2$, $g_3=3$, $g_4=3$, $g_5=4$ and $g_6=5$. Since this genus is never equal to $1$, the boundary foliation cannot be extended to include a circular leaf.
\end{proof}

\begin{figure}[h]
\centering
\scalebox{0.8}{\includegraphics{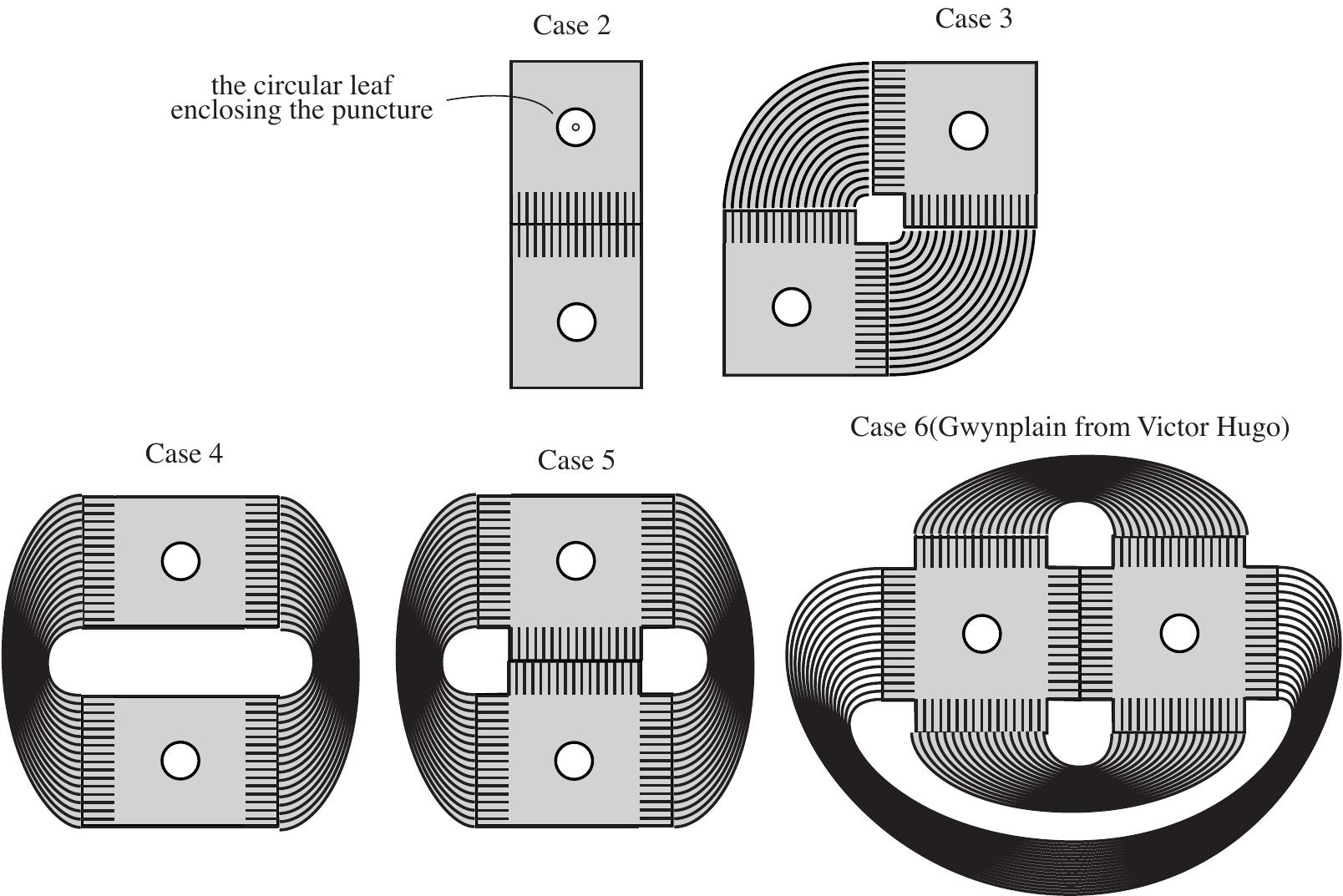}}
    \caption{\label{plumb} Different types of plumbings}
\end{figure}

Hence we obtain a complete understanding of the possible leaf types occurring for the sets $C_i$ when punctured by a singleton: \emph{For $i=1$, only $\mathbb S^1$ and ${\mathbb R}$ can occur, while for $2\le i\le6$ only the circle is precluded.} Of
course this is only a coarse overview, still faraway from a complete classification scheme of the topologically distinct foliations.

In classifying foliations on $\mathbb L^2-\{0\}$, one of the main complications arises from the possible occurrences of real leaves (in the form of ``petals'' about the puncture). Petals can be arranged into ``flowers'' with finitely many petals (see Figure~\ref{petal}). In fact one can even observe flowers with countably many petals (of shrinking sizes). Further one can nest many (non-nested) petals inside a given petal, and also plug a Reeb component between two nested petals, etc. Hence the classification scheme looks quite complicated. We do not attempt here to give a complete solution even though this problem resembles Kaplan's classification of foliations on the plane (by means of chordal systems of curves or via non-Hausdorff, second countable, simply connected one-manifolds which is Haefliger-Reeb's point of view) and so can be considered as being ``essentially'' completely solved in the existing literature.

\begin{figure}[h]
\centering
\scalebox{1.1}{\includegraphics{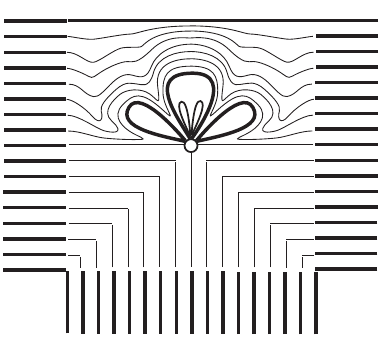}}
    \caption{\label{petal} A menagerie of petals}
\end{figure}

The plumbing construction also allows us to understand the unpunctured case completely. First a lemma.

\begin{lemma}\label{no turbulence} Let $\mathcal F$ be a foliation on the square $S=[0,1]^2$ extending the following boundary prescriptions: the two horizontal sides $[0,1]\times \{0,1 \}$ of the square are leaves and the foliation is horizontal on thin strips $([0,\varepsilon]\cup [1-\varepsilon, 1]) \times [0,1]$ along the vertical sides (for some immaterial $0<\varepsilon<1/2$). Then there exists a self-homeomorphism $h \colon S \to S$ such that the push-forward foliation $h_{\ast} {\mathcal F}$ becomes the (rectilinear) horizontal foliation.
\end{lemma}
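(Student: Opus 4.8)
The plan is to reduce the Lemma to a flow‑box trivialization by first proving that every leaf of $\mathcal F$ runs from the left edge of $S$ to the right edge, then exhibiting $S$ as a trivial $[0,1]$-bundle over the leaf space. First I would pass to a flow: since $S$ is simply connected the tangent line field of $\mathcal F$ is orientable, so by Whitney's theorem \cite{Whitney33} $\mathcal F$ is the orbit foliation of a nonsingular continuous flow $(f_t)$ on $S$. Orient the flow so that on the strip $[0,\varepsilon]\times[0,1]$ it is the rightward horizontal flow $\partial/\partial x$. As the leaf $[0,1]\times\{0\}$ is connected, nonsingular and meets this strip, the flow runs along it from $(0,0)$ to $(1,0)$; hence at $(1,0)$ the flow again points to the right, and by connectedness of $[1-\varepsilon,1]\times[0,1]$ it is the rightward horizontal flow there too. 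Thus, relative to $S$, the flow is entrant along the left edge $J:=\{0\}\times[0,1]$, exits along the right edge $\{1\}\times[0,1]$, and is tangent to the two horizontal edges.

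Next I would show that every orbit reaches the right edge in finite forward time and the left edge in finite backward time. Suppose a forward orbit stays in $S$ for all $t\ge 0$; then its $\omega$-limit set $\Omega$ is a nonempty compact connected invariant set. It cannot lie in $\mathrm{int}(S)$: there are no rest points, and no closed orbits (a closed orbit would bound a disc and force a rest point inside by Poincar\'e--Hopf), so by the Poincar\'e--Bendixson theorem there is no admissible limit set in the interior. Hence $\Omega$ meets $\partial S$; being invariant it contains one of the two horizontal boundary orbits, say $[0,1]\times\{0\}$, together with the corner $(1,0)$. But then the orbit accumulates near $(1,0)$, where the flow is rightward and leaves $S$ --- a contradiction. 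So every orbit crosses $S$, and since $\mathcal F$ is horizontal near $J$ a leaf can meet $J$ only at the endpoint where it enters $S$; thus each leaf is a compact arc meeting the transversal $J$ exactly once.

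It then remains to integrate. The map $q\colon S\to J\cong[0,1]$ sending a point to the unique intersection of its leaf with $J$ is continuous (cover a leaf by finitely many foliated charts via Lemma \ref{tube}), is a closed surjection since $S$ is compact, restricts to the identity on $J$, and has the leaves of $\mathcal F$ as its fibres; hence it identifies the leaf space $S/\mathcal F$ with $[0,1]$. Using Lemma \ref{tube} to build product tube neighbourhoods of individual leaves shows $q$ is a locally trivial $[0,1]$-bundle, and since the base is contractible it is globally trivial: there is a homeomorphism $\Phi\colon S\to[0,1]\times[0,1]$ with $\mathrm{pr}_1\circ\Phi=q$, so $\Phi$ carries each leaf to a segment $\{c\}\times[0,1]$. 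Composing with the coordinate swap $\sigma(a,b)=(b,a)$, the self-homeomorphism $h=\sigma\circ\Phi$ of $S$ satisfies $h_{\ast}\mathcal F=$ the horizontal foliation, as required.

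The main obstacle is the second step: ruling out a leaf that turns back on itself or spirals rather than crossing the square. Everything hinges on the absence of closed leaves and of recurrence, which is exactly why the flow picture (Whitney, available because planarity makes $\mathcal F$ tangentially orientable) together with Poincar\'e--Bendixson is the efficient route. One must be a little careful with the boundary behaviour of the flow; the cleanest remedy is to extend $\mathcal F$ slightly past $\partial S$ using the horizontal structure near the vertical edges (and a collar along the horizontal leaves), so that the planar Poincar\'e--Bendixson theorem applies verbatim in the interior.
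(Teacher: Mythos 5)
Your proof is correct, and its skeleton --- orient $\mathcal F$ using simple connectivity, realise it as a Whitney flow, use Poincar\'e--Bendixson to force every leaf to traverse the square, then trivialise --- is the same as the paper's; but both halves are implemented by genuinely different sub-arguments. To show that every leaf is an arc running from $\Sigma_0=\{0\}\times[0,1]$ to $\Sigma_1=\{1\}\times[0,1]$, the paper first pins down the homeomorphism type of a leaf (second countable by Haefliger's chain argument, hence $\mathbb S^1$, $\mathbb R$, $[0,\infty)$ or $[0,1]$), kills the non-compact types by Poincar\'e--Bendixson, and excludes an arc returning to the side it started from by a plumbing/doubling trick that would produce a foliated disc; you instead fix the orientation of the flow on the two vertical strips and run an $\omega$-limit-set argument, which disposes of recurrence and of ``turning back'' in one stroke (a leaf cannot re-enter the left strip against the rightward flow, and an orbit trapped in $S$ would have a nonempty limit set that is neither a rest point, nor a closed orbit, nor allowed to touch $\partial S$). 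For the trivialisation, the paper synchronises the flow --- reparametrising time so that the transit from $\Sigma_0$ to $\Sigma_1$ takes exactly one unit --- and uses the resulting flow map $[0,1]\times\Sigma_0\to S$ as the homeomorphism, whereas you observe that projection to the leaf space is a locally trivial $[0,1]$-bundle over an interval, hence trivial; both are sound, the paper's being more explicit and yours more conceptual. The one point you must not skate over is the one you flag yourself at the end: Whitney's theorem, the tube lemma and Poincar\'e--Bendixson should be applied only after extending $\mathcal F$ beyond $\partial S$ (the paper extends to the infinite strip $\mathbb R\times[0,1]$ for exactly this reason), since on the square itself the compact-arc leaves are not honest orbits of a global flow and invariance of $\omega$-limit sets at the boundary needs care; with that extension in place your argument goes through.
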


\begin{proof} We divide the proof in three steps.

{\bf Step 1 (Analysis of the possible leaf types via Poincar\'e-Bendixson).} \emph{Each leaf $L$ of $\mathcal F$ is an arc with extremities lying on the opposite sides $\Sigma_0=\{0\} \times I$ and $\Sigma_1=\{1\} \times I$ of the square.}

Since the ambient manifold $S$ is metrisable, a classical chain-argument (of Chevalley-Haefliger, \cite{Haefliger55}) shows that the leaf $L$ endowed with the leaf topology is also second countable. Hence $L$ is homeomorphic to one of the only four possible metrisable one-manifolds (with boundary): namely $S^1$, ${\mathbb R}$, $[0,\infty)$ or $I=[0,1]$. Of course a circular leaf cannot occur (otherwise via Schoenflies we get a foliation on the 2-disc). The two cases ${\mathbb R}$, $[0,\infty)$ are precluded by the Poincar\'e-Bendixson theorem (restrict the foliation to the interior of the square $(0,1)^{2}$ and note that the unbounded-side of $L$, in the sense of having no boundary, cannot escape to infinity.) In conclusion, the only possible leaf-type is $I$. Clearly such a leaf is forced to traverse the square, otherwise it comes back to the same side where it started, and so cuts out a portion of $S$ which when doubled along the boundary via plumbing yields a foliated 2-disc.

{\bf Step 2 (Synchronising a Whitney flow).} To eliminate the transversal behaviour of $\mathcal F$ along the vertical sides of $S$, let us extend the square $S$ to an (infinite) strip $X={\mathbb R}\times I$ over which the foliation is extended horizontally. Since the ambient space $X$ is simply connected, the extended foliation ${\mathcal F}_{\infty}$ on $X$ is orientable, hence describable as the orbits of a flow $\psi\colon {\mathbb R} \times X \to X$ (Whitney \cite{Whitney33}). According to Step 1 (reversing time if necessary), each point $s$ of $\Sigma_0$ will be carried by the flow $\psi$ to a point of $\Sigma_1$ after the elapsing of a certain amount of time $\tau (s)$ (which depends continuously on~$s$). Via the time reparametrisation
$$
\varphi(t,x)=\psi\bigl(\tau(s(x)) t, x\bigr)\,,
$$
where $s(x)$ denotes the unique point of the leaf through $x\in X$ lying on $\Sigma_0$ (whose existence is guaranteed by Step 1), we get a new flow $\varphi$ for which the elapsed time required to traverse from $\Sigma_0$ to $\Sigma_1$ is constantly equal to $1$.

{\bf Step 3 (The synchronised flow $\varphi$ induces a global trivialisation to the horizontal foliation).} By restricting $\varphi\colon {\mathbb R} \times X \to X$ to $[0,1] \times \Sigma_0$, we obtain a map $g\colon [0,1]\times \Sigma_0 \to S$. It is easy to check that $g$ is bijective. For the surjectivity, take $x\in S$. The leaf $L_x$ through $x$ is according to Step 1 a (closed) interval with extremities rooted in different sides. Hence $L_x \cap \Sigma_0$ is a point $s_0$ and by continuity there is a time $t_0\in[0,1]$ such that $\varphi(t_0, s_0)=x$. For the injectivity assume $\varphi(t_1,s_1)=\varphi(t_2,s_2)$, then $s_1=s_2$ (else one obtains a foliation on the disc), and in turn this implies $t_1=t_2$ (otherwise one gets a periodical orbit, circulating again around an impossible foliated disc). Finally when $(t,s)$ moves horizontally in the square $[0,1]\times \Sigma_0$ (i.e. $s$ fixed, $t$ variable), the point $g(t,s)$ describes a specific leaf of $\mathcal F$. Hence the inverse homeomorphism $h=g^{-1}$ takes the foliation ${\mathcal F}$ to the horizontal (straight) foliation on the square.\end{proof}

\begin{cor} \label{longplane_rigididity} Up to homeomorphism there are only two foliations on the long plane ${\mathbb L}^2$ given by the rectilinear models extending Cases 3 and 4 of Theorem \ref{foliate long plane}. Up to isotopy they are only six foliations (four of the ``broken type'' corresponding to Case~3 and two which are the product foliations).
\end{cor}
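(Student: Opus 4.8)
The plan is to feed the asymptotic classification of Theorem~\ref{foliate long plane} (applied with $K=\varnothing$) into the plumbing obstruction used in the proof of Proposition~\ref{no foliation has a circle}, then to use the straightening Lemma~\ref{no turbulence} to pin down the surviving cases, and finally to bookkeep the ambient symmetries of $\mathbb{L}^2$ for the isotopy count. The first move is to dispose of circular leaves: any circle in $\mathbb{L}^2$ lies inside a metrisable square $[-\beta,\beta]^2$, hence by Schoenflies bounds a disc in $\mathbb{L}^2$, and were it a leaf then $\mathcal{F}$ would restrict to a foliation of that closed disc tangent to its boundary, which is impossible (Poincar\'e--Bendixson, or Lemma~\ref{Euler}). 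Consequently Case~(1) of Theorem~\ref{foliate long plane} cannot happen, and --- more usefully --- every region into which $\mathbb{L}^2$ is cut by the prescribed leaves is a genuine disc, having no interior circular boundary leaf.

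Next I would exclude Cases~(2), (5), (6). A closed unbounded set $C\subset\mathbb{L}_+$ has a least element $\alpha_0$, so the compact square $[-\alpha_0,\alpha_0]^2$ is one of these discs and carries $\mathcal{F}$ restricted to it with boundary pattern $C_2$, $C_5$ or $C_6$ respectively (in the case of $C_2$, for instance, the ``staple'' leaf of Case~(2) supplies three of the four sides). Running the plumbing-and-doubling construction of Proposition~\ref{no foliation has a circle} on such a foliated disc produces a dimension-one foliation of a closed orientable surface whose genus is read off the plumbing diagram: one obtains genus $0$ for the patterns $C_1,C_2,C_5,C_6$, but genus $1$ for $C_3,C_4$ (for $C_3$ the special corner splits its transverse boundary into two arcs, so the plumbed-double is an annulus rather than a disc, just as for $C_4$). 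Since only the torus admits a dimension-one foliation (Lemma~\ref{Euler}), Cases~(1),(2),(5),(6) are impossible and only Cases~(3) and~(4) remain.

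It then remains to classify the foliations realising Cases~(3) and~(4). I would first extend Lemma~\ref{no turbulence} to the ``broken square'' $C_3$: cutting $C_3$ along the leaf through its special corner yields two squares each carrying $C_4$-type boundary data, Lemma~\ref{no turbulence} straightens each, and recombining the two straightenings gives a self-homeomorphism of $C_3$ carrying $\mathcal{F}$ to the rectilinear broken model. One then assembles these local models along the tower of squares $[-\alpha_i,\alpha_i]^2$, $\alpha_i\in C$: on each square-annular shell the finitely many sub-squares (of type $C_3$ or $C_4$) are straightened edge-compatibly, the mismatch on the separating circles $\partial[-\alpha_i,\alpha_i]^2$ being absorbed into a collar, and at limit ordinals one passes to the limit in the spirit of Proposition~\ref{saturate}. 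This gives a homeomorphism of $\mathbb{L}^2$ taking $\mathcal{F}$ either to the rectilinear product model (Case~(4)) or to one of the rectilinear broken models (Case~(3)). These two are not homeomorphic: in the product model every leaf splits $\mathbb{L}^2$ into two pieces, each a copy of $\mathbb{L}\times\mathbb{L}_+$ and hence not $\omega$-bounded, whereas in the broken model the leaf through a point $(\alpha,\alpha)$ with $\alpha\in C$ has an $\omega$-bounded complementary component (a copy of $\mathbb{L}_+\times\mathbb{L}_+$). Hence there are exactly two foliations up to homeomorphism.

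Finally, for the isotopy count I would track the rotations and reflections of $\mathbb{L}^2$ used above: up to the dihedral symmetry group the product model occurs in two ``directions'' (leaves long in the first or in the second coordinate) and the broken model occurs with its breaking locus along any one of the four semi-diagonals, giving $2+4=6$ candidates. That each candidate is a single isotopy class follows by checking that every homeomorphism used in the straightening and gluing can be realised by an ambient isotopy (those from Lemma~\ref{no turbulence} and the shell-by-shell gluing being isotopic to the identity rel the relevant boundary data); that the six are pairwise non-isotopic reduces to showing that an ambient isotopy of $\mathbb{L}^2$ starting at the identity can alter neither the asymptotic direction of a long leaf nor the semi-diagonal along which bending occurs, equivalently that the corresponding symmetries of $\mathbb{L}^2$ are not isotopic to the identity. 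I expect this last point --- producing an isotopy invariant sharp enough to separate directions and diagonals on this one-ended, simply connected, homogeneous surface --- together with the care needed at the limit stages of the transfinite gluing, to be the principal obstacles; the plumbing genus computations and the $C_3$-straightening are routine once set up.
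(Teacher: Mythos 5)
Your overall strategy is the paper's: feed Theorem \ref{foliate long plane} (with $K=\varnothing$) into the plumbing/Euler-characteristic obstruction to kill Cases 1, 2, 5, 6, straighten the surviving regions with Lemma \ref{no turbulence}, and count symmetries for the isotopy statement. Two steps, however, do not work as you state them. First, you cannot ``cut $C_3$ along the leaf through its special corner'': by definition of $C_3$ that leaf meets the square \emph{only} at the corner, and in any foliation extending the boundary data the induced leaves near that corner are small arcs running from one transverse side to the other and shrinking to the corner point --- there is no separating interior leaf, and cutting along any interior leaf produces one $C_4$-piece and one smaller $C_3$-piece, so the reduction never terminates. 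The correct move (which is what the paper intends by saying Lemma \ref{no turbulence} ``applies as well'') is to rerun the lemma's proof directly on the $C_3$ data: Poincar\'e--Bendixson plus the plumbing trick shows every leaf is a compact arc joining the two transverse sides, and synchronising a Whitney flow from one transverse side to the other trivialises the foliation to the rectilinear broken model. (A small arithmetic point: by your own corner-splitting rule the plumbed doubles of $C_5$ and $C_6$ have genus $2$ and $3$, not $0$; this does not affect the exclusion, which only needs genus $\neq 1$.)

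The second and more serious gap is the isotopy count, which you correctly flag as the principal obstacle but leave open. This is not something to verify by hand from the straightening construction: the paper closes it in one stroke by invoking the ``super-rigidity'' of the identity component of the homeomorphism group of ${\mathbb L}^2$, i.e.\ Theorem 1.1 of \cite{BaillifDeoGauld} on the mapping class group of powers of the long ray. That result simultaneously shows that the straightening homeomorphisms you construct are isotopic to the identity and that the dihedral symmetries permuting the two product directions and the four broken models lie in distinct isotopy classes, yielding $2+4=6$. Without that input your argument establishes only the classification up to homeomorphism. On the plus side, your $\omega$-boundedness invariant (the broken model has leaves with an $\omega$-bounded complementary component $\cong{\mathbb L}_{+}\times{\mathbb L}_{+}$, the product model does not) is a clean way to separate the two homeomorphism classes, a point the paper leaves implicit.
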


\begin{proof} We apply Theorem \ref{foliate long plane} and consider the associated sets $C_i$ without punctures. The plumbed doubles of $C_2$, $C_5$ and $C_6$ have non-zero Euler characteristic so by Lemma \ref{Euler} do not allow the corresponding foliations to extend. The case $C_1$ does not lead to a foliation on $\mathbb L^2$ because it would involve a foliated disc. The remaining two cases, $C_3$ and $C_4$, each admit a unique foliated extension according to Lemma~\ref{no turbulence}. The proof of the first clause is now completed by noticing that Lemma \ref{no turbulence} applies as well to all the peripheral (unpunctured) regions arising in Cases~3 and 4 of
Figure \ref{foliateL^2-pt}. The classification up to isotopy follows from the ``super-rigidity'' of the group of self-homeomorphism of ${\mathbb L}^2$ isotopic to the identity map, \cite[Theorem 1.1]{BaillifDeoGauld}.
\end{proof}

\begin{rem}
{\rm The method of proof given in this section may also be extended to other situations. As an example, $\mathbb L^2$ is really just obtained by sewing together eight copies of the first octant $\{(x,y)\in\mathbb L^2 : 0\le y\le x\}$ in a judicious way. We may sew together any finite number of such octants similarly and puncture the outcome to get many more long pipes and related manifolds. As a second example, we may generalise the methods to analyse dimension-one foliations on $\mathbb L^n$ for $n>2$. }
\end{rem}

Let us conclude by giving a simple example vindicating the viewpoint that foliations can be used to distinguish some very similar looking manifolds.

\begin{cor}\label{rectangular v rhombic} The rectangular quadrant ${\bf Q}={\Bbb L}_{\ge
0}\times{\Bbb L}_{\ge 0}$ has a foliation tangent to the
boundary, while the rhombic quadrant ${\overline Q}=\{(x,y) \in {\Bbb
L}^2 : -y \le x\le y \}$ does not. In particular the quadrants
${\bf Q}$ and ${\overline Q}$ are not homeomorphic.
\end{cor}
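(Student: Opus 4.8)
The plan is to prove the two quadrants are non-homeomorphic by exhibiting a foliation on the rectangular quadrant $\mathbf{Q}$ and showing no such foliation exists on the rhombic quadrant $\overline{Q}$, the latter being an easy consequence of the machinery already developed.

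\medskip
\noindent\textbf{Foliating $\mathbf{Q}$.} First I would simply write down the foliation on $\mathbf{Q}=\mathbb{L}_{\ge 0}\times\mathbb{L}_{\ge 0}$ by horizontal rays $\mathbb{L}_{\ge 0}\times\{y\}$, together with the single ``corner'' leaf consisting of the two boundary segments $(\{0\}\times\mathbb{L}_{\ge 0})\cup(\mathbb{L}_{\ge 0}\times\{0\})$. More carefully, near the vertical boundary $\{0\}\times\mathbb{L}_{\ge 0}$ one uses a Reeb-type turning so that the leaves peel away from the vertical axis and become horizontal; this is exactly the local picture of the ``broken'' foliation appearing in Case 3 of Theorem~\ref{foliate long plane}, and it produces a genuine foliated chart structure along the whole boundary (the boundary being tangent to $\mathcal{F}$). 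Checking that this is a foliation in the sense of Definition~\ref{define foliation} is routine and local, essentially the standard fact that a disc-with-a-corner minus a smaller disc — or here a quadrant neighbourhood of the corner — carries the rectilinear foliation after the turning; alternatively one can cite Lemma~\ref{no turbulence} applied to the relevant squares. The point is only that such a foliation, tangent to the connected boundary, visibly exists.

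\medskip
\noindent\textbf{No foliation on $\overline{Q}$ tangent to the boundary.} Suppose $\mathcal{F}$ were a dimension-one foliation on $\overline{Q}=\{(x,y):-y\le x\le y\}$ with the boundary (the two semi-diagonals $\{(\pm x,x):x\in\mathbb{L}_{\ge 0}\}$, meeting at the origin) tangent to $\mathcal{F}$. The open interior of $\overline{Q}$ is precisely the quadrant $Q=\{-y<x<y\}$ of Proposition~\ref{quadrant constraint} (take $K=\varnothing$, or a small $K$ if one prefers to work on $\mathbb{L}^2-K$ and deduce the boundary behaviour afterwards). By that proposition one of the two dichotomous alternatives holds along $Q$. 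In the first alternative there is a closed unbounded $C$ of ``horizontal-segment'' leaves $Q\cap([-\alpha,\alpha]\times\{\alpha\})$; but such a segment, extended as a leaf, must terminate on the boundary semi-diagonals, and since those are themselves leaves this contradicts the fact that leaves are disjoint — more precisely, one invokes Remark~\ref{semi-diagonals not leaves}, which already states that the semi-diagonals cannot be leaves of \emph{any} dimension-one foliation on $\mathbb{L}^2-K$. So no foliation of $\overline{Q}$ can have its boundary tangent. (If one only wants ``$\overline{Q}$ has no foliation tangent to the boundary'' this is immediate from Remark~\ref{semi-diagonals not leaves}; I would present it this way as the cleanest route.)

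\medskip
\noindent\textbf{Conclusion.} A homeomorphism $\mathbf{Q}\to\overline{Q}$ must carry the (topological) boundary to the boundary, hence would transport the boundary-tangent foliation on $\mathbf{Q}$ to one on $\overline{Q}$, which we have just seen is impossible; therefore $\mathbf{Q}\not\approx\overline{Q}$. The one point that needs a word of care — and which I expect to be the only genuine obstacle — is verifying that the explicit turning foliation on $\mathbf{Q}$ really satisfies Definition~\ref{define foliation} all the way into the corner point, where two boundary edges meet; but this is a purely local manifold-with-corners computation, identical to the corner appearing in the central square of Case~3 of Theorem~\ref{foliate long plane} and already implicitly handled by Lemma~\ref{no turbulence}, so it can be dispatched by reference rather than recomputed.
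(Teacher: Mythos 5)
Your overall strategy is the paper's: foliate $\mathbf{Q}$ tangentially to the boundary, use Proposition \ref{quadrant constraint} to forbid such a foliation on $\overline{Q}$, and conclude non-homeomorphy since a homeomorphism preserves the boundary and transports foliations. However, your explicit construction on $\mathbf{Q}$ does not work as written. The family of horizontal rays $\mathbb{L}_{\ge 0}\times\{y\}$ together with the ``corner leaf'' $(\{0\}\times\mathbb{L}_{\ge 0})\cup(\mathbb{L}_{\ge 0}\times\{0\})$ is not a partition: the ray at height $y$ contains the point $(0,y)$, which lies on your boundary leaf. The proposed repair by a ``Reeb-type turning'' near the vertical edge is also obstructed: an interior leaf of a Reeb component in a strip is asymptotic to the boundary line going to infinity, but a short leaf cannot be cofinal in the long ray $\{0\}\times\mathbb{L}_{\ge 0}$ (the continuous image of $(0,1]$ in $\mathbb{L}_{\ge 0}$ is bounded), so there is no Reeb collar along a long boundary edge. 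The correct object --- and what the paper writes down --- is the foliation by \emph{broken long lines} $\bigl([\alpha,\omega_1)\times\{\alpha\}\bigr)\cup\bigl(\{\alpha\}\times[\alpha,\omega_1)\bigr)$, $\alpha\in\mathbb{L}_{\ge 0}$: these genuinely partition $\mathbf{Q}$ (the leaf through $(x,y)$ is the one with $\alpha=\min(x,y)$), the boundary is the leaf $\alpha=0$, and $(x,y)\mapsto(y-x,\min(x,y))$ gives foliated charts even across the diagonal and at the corner. Your appeal to Case 3 of Theorem \ref{foliate long plane} shows you have this picture in mind, but the leaves must bend at the diagonal, not run horizontally into the vertical edge.

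On the negative half, your shortcut via Remark \ref{semi-diagonals not leaves} has a scope mismatch: that remark concerns foliations defined on all of $\mathbb{L}^2-K$, whereas here the foliation lives only on $\overline{Q}$ and need not extend. The legitimate route (and the paper's) is to apply Proposition \ref{quadrant constraint} to the interior quadrant --- its proof uses only the foliation there --- and then observe that \emph{each} of the two alternatives produces a ``tripod'' at a boundary point $(\pm\alpha,\alpha)$: an asymptotically horizontal (resp.\ vertical) leaf accumulating on, or meeting, the tangent boundary leaf, which is impossible in a foliated chart. You carry this out only for the first alternative and phrase the contradiction loosely as ``leaves are disjoint''; the vertical alternative needs the same one-line argument. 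These are repairable defects, but the construction on $\mathbf{Q}$ as literally stated is wrong and must be replaced by the broken foliation.
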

\begin{proof} Note first that ${\bf Q}$ has a foliation tangent to the
boundary, namely by broken long lines, i.e. sets of the form
$\bigl([\alpha,\omega_1)\times \{\alpha \}\bigr) \cup \bigl(\{
\alpha\} \times [\alpha,\omega_1)\bigr)$ with $\alpha \in
{\Bbb L}_{\ge 0}$.
In contrast ${\overline Q}$ has no such foliation. Indeed according
to Proposition \ref{quadrant constraint} any foliation on ${\overline Q}$ is either
asymptotically horizontal or vertical. In both cases one
observes a ``tripod'' singularity, where a horizontal (resp.
vertical) leaf meets the boundary leaf.
\end{proof}

\section{Appended technical lemmas}

\begin{lemma}\label{Euler}
If a closed topological manifold $M$ carries a dimension-one $C^0$-foliation, then its Euler characteristic $\chi(M)$ vanishes.
\end{lemma}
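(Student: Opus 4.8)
The plan is to promote the dimension-one foliation to a genuine dynamical system and then invoke a fixed-point count. First I would pass to an orientation double cover if necessary: the tangent line field of $\mathcal F$ need not be globally orientable, but it becomes so on a double cover $\widetilde M \to M$, and since $\chi(\widetilde M) = 2\chi(M)$, it suffices to treat the transversely-and-tangentially-orientable case — so we may assume $\mathcal F$ is the orbit foliation of a non-singular flow. Here the point is that an oriented one-dimensional $C^0$-foliation on a manifold carries a non-vanishing "direction field" which, by Whitney's theorem \cite{Whitney33} (a $C^0$ line field on a manifold is the orbit field of a continuous flow), is integrable to a continuous flow $\psi\colon\mathbb R\times M\to M$ whose orbits are exactly the leaves of $\mathcal F$. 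The key structural fact we extract is that $\psi_t$ has \emph{no} fixed point for any $t\neq 0$ for which a fixed point would force a leaf to be a single point — more precisely, we will only need that the full orbit through any point is non-degenerate (not a constant), which holds since leaves are genuine one-manifolds.

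Second, I would run the Lefschetz fixed-point argument on the dyadic time sequence, exactly as sketched in the body of the paper in the proof of the genus-$\ge 2$ obstruction. Each $\psi_{t}$ is homotopic to $\psi_0 = \mathrm{id}_M$ (via $s\mapsto \psi_{st}$), so its Lefschetz number equals $\Lambda(\mathrm{id}_M) = \chi(M)$. If $\chi(M)\neq 0$ then by the Lefschetz fixed-point theorem (valid for self-maps of compact ANRs, hence of closed topological manifolds) the set $K_n := \mathrm{Fix}(\psi_{1/2^n})$ is non-empty for every $n$. These sets are closed, and they are nested: if $\psi_{1/2^{n+1}}(x) = x$ then $\psi_{1/2^n}(x) = \psi_{1/2^{n+1}}(\psi_{1/2^{n+1}}(x)) = x$, so $K_{n+1}\subseteq K_n$. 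By compactness $\bigcap_{n\ge 1} K_n \neq \varnothing$; pick $x$ in this intersection. Then $\psi_{1/2^n}(x) = x$ for all $n$, hence $\psi_{k/2^n}(x) = x$ for all dyadic rationals $k/2^n$, and by continuity of $t\mapsto\psi_t(x)$ together with density of the dyadic rationals, $\psi_t(x) = x$ for all $t\in\mathbb R$. Thus the orbit through $x$ is a single point, contradicting the fact that every leaf of $\mathcal F$ is a one-manifold (in particular not a point). Therefore $\chi(M) = 0$.

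The main obstacle, and the point that needs care rather than cleverness, is the appeal to Whitney's theorem in the merely continuous ($C^0$) category: one must be sure that a continuous nowhere-zero line field on a closed manifold really does integrate to a continuous flow. This is classical for $\mathbb R^n$ and in fact Whitney's original paper \cite{Whitney33} addresses exactly the continuous, non-Lipschitz case via a careful local-to-global argument; for a closed manifold one patches the local flows using the uniqueness built into Whitney's construction and compactness. A secondary technical point is the applicability of the Lefschetz theorem to closed topological (not necessarily smooth or triangulable) manifolds: this is fine because a closed topological manifold is a compact ANR, and the Lefschetz–Hopf theorem holds for self-maps of compact ANRs (Lefschetz's original ANR version, see also the treatment via Čech homology). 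Everything else — the homotopy to the identity, the nesting of fixed-point sets, the dyadic density argument — is routine and already appears verbatim in the paper's proof of the higher-genus obstruction, so the write-up can simply refer back to it.
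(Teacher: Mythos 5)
Your proposal is correct and follows essentially the same route as the paper's own proof: reduce to the orientable case via a double cover (using $\chi(M^{\ast})=2\chi(M)$), integrate the oriented line field to a Whitney flow, and apply the Lefschetz fixed-point theorem to the dyadic times $t_n=1/2^n$ to produce a rest point of the flow, contradicting the fact that the orbits are the leaves. Your write-up even supplies slightly more detail than the paper (the explicit nesting of the $K_n$ and the density-of-dyadics step), so there is nothing to add.
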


\begin{proof} Let $\mathcal F$ be a dimension-one foliation on $M$, and assume
$\chi(M)\neq 0$.

Let us first suppose $\mathcal F$ orientable. Then according to Whitney \cite{Whitney33}, there is a flow $f\colon {\mathbb R} \times M \to M$ whose trajectories are exactly the leaves of the foliation. Let $t_n=1/2^n$ denote the dyadic times. We consider the nested sequence $K_n={\rm Fix} (f_{t_n})$ of the fixed-point sets of the dyadic-times of the flow $f_{t_n}$. As the $f_{t_n}$ are all homotopic to the identity map, their Lefschetz numbers $\Lambda(f_{t_n})=\sum_{i} (-1)^i {\rm Trace} H_{i} (f_{t_n})$ all reduces to $\chi(M)$. Since $\chi(M)\neq 0$, the Lefschetz fixed-point theorem tell us that all the $K_n$ are non-void. By compactness of $M$ it follows that $\cap_{n=1}^{\infty} K_n \neq \varnothing$. So there is a point at rest for all dyadic times of the flow, which is then a rest point of the flow. This is a contradiction, as the orbits of Whitney's flow are exactly the leaves of the given foliation.

If the foliation $\mathcal F$ is not orientable, it generates canonically a two-fold cover $M^{\ast} \to M$ making its pull-back ${\mathcal F}^{\ast}$ to $M^{\ast}$ orientable. When the foliation is smooth this cover is just the spherisation of the tangent (line) bundle $T{\mathcal F}$ to the foliation, and in the $C^0$-case this cover is still available thinking in terms of ``germs'', see \cite[p.\,371]{Haefliger62} or \cite[p.\,16--17]{HectorHirschA}. As $\chi(M^{\ast})=2 \chi(M)$, and hence non zero, arguing as above leads to a contradiction. This completes the proof of the lemma.
\end{proof}

\begin{lemma}\label{Eulercodim}
If a closed topological manifold $M$ has a codimension-one
$C^0$-foliation, then its Euler characteristic $\chi(M)$
vanishes.
\end{lemma}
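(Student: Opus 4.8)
The plan is to mimic the structure of the proof of Lemma~\ref{Euler}: reduce to an orientability situation and then play off the codimension-one hypothesis. The point is that, unlike a dimension-one foliation, a codimension-one foliation is \emph{not} a flow, so the Lefschetz/Whitney argument of Lemma~\ref{Euler} is unavailable; instead one exploits that the \emph{normal} ``line object'' of a codimension-one foliation, once orientable, is trivial, whence the tangent bundle of $M$ acquires a trivial line summand and its Euler class must vanish.

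First I would dispose of orientability. If $\mathcal F$ is not transversely orientable, the germ construction of Haefliger (already used in Lemma~\ref{Euler}, see \cite[p.\,371]{Haefliger62}) furnishes a canonical $2$-fold cover on which the pulled-back foliation is transversely orientable; passing if need be to a further double cover one may also take $M$ itself orientable. Since Euler characteristic is multiplicative under finite covers, $\chi$ only gets multiplied by a factor $\le 4$, so it suffices to treat the case in which $M$ is orientable and $\mathcal F$ is transversely orientable.

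The core step is then the following. The coordinate transformations of Definition~\ref{define foliation}, being ``block lower triangular'' (the last coordinate transforms through $h_{i,j}$, independently of the first), split the tangent microbundle of $M$ as $\tau_M\cong\tau_{\mathcal F}\oplus\nu_{\mathcal F}$, where $\tau_{\mathcal F}$ is the $(n-1)$-dimensional microbundle tangent along the leaves and $\nu_{\mathcal F}$ the $1$-dimensional normal microbundle assembled from the $h_{i,j}$'s (the shear terms $\partial g_{i,j}/\partial y$ being immaterial up to microbundle isomorphism). Transverse orientability says exactly that $\nu_{\mathcal F}$ is an oriented topological line microbundle; by the Alexander trick the structure ``group'' of line microbundles has the homotopy type of $\mathrm{O}(1)$, so such a microbundle is detected by its first Stiefel--Whitney class alone, and orientability forces $\nu_{\mathcal F}$ to be trivial. (In the smooth category this is merely the familiar remark that a transversely orientable codimension-one foliation carries a nowhere-zero normal vector field.) From the Whitney-sum formula for Euler classes of oriented (micro)bundles, $e(\tau_M)=e(\tau_{\mathcal F})\smile e(\nu_{\mathcal F})=e(\tau_{\mathcal F})\smile 0=0$, and pairing with $[M]$ via the topological Poincar\'e--Hopf theorem gives $\chi(M)=0$. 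In the smooth case one can shortcut: either conclude $\chi(M)=0$ directly from the nowhere-zero normal field, or integrate the transversely oriented normal line field to a dimension-one foliation transverse to $\mathcal F$ and quote Lemma~\ref{Euler}.

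The main obstacle is entirely in the non-smooth bookkeeping: making rigorous, in the purely $C^0$ setting where the leafwise tangent plane field is not even continuous, both the splitting $\tau_M\cong\tau_{\mathcal F}\oplus\nu_{\mathcal F}$ of microbundles and the fact that a trivial microbundle summand kills the Euler class that computes $\chi$. This has to be carried out with Milnor's theory of microbundles together with Haefliger's germ-groupoid description of a $C^0$-foliation, rather than any differential-geometric device.
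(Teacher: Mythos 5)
Your overall strategy (the normal line object of a transversely orientable codimension-one foliation is trivial, hence the Euler class of $\tau_M$ dies) is the classical smooth argument, and your reduction to the orientable, transversely orientable case via finite covers is fine. But the step you defer as ``non-smooth bookkeeping'' is not bookkeeping: it is essentially the entire content of the lemma in the $C^0$ category, and you give no argument for it. Concretely, the splitting $\tau_M\cong\tau_{\mathcal F}\oplus\nu_{\mathcal F}$ is not a formal consequence of the block-triangular shape of the transition functions: in a foliated atlas the transition germs of $\tau_M$ look like $(u,v)\mapsto\bigl(g_{i,j}(x+u,y+v),h_{i,j}(y+v)\bigr)$, whereas those of $\tau_{\mathcal F}\oplus\nu_{\mathcal F}$ look like $(u,v)\mapsto\bigl(g_{i,j}(x+u,y),h_{i,j}(y+v)\bigr)$, and identifying these up to microbundle isomorphism (i.e.\ killing the shear) needs an actual homotopy/Alexander-type argument that you would have to supply; there is no soft ``choose a complement'' device for topological microbundles, since there is no metric, and the leaves of a $C^0$-foliation need not even have tangent planes. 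Until that splitting is established, the Whitney-sum computation $e(\tau_M)=e(\tau_{\mathcal F})\smile e(\nu_{\mathcal F})=0$ has nothing to apply to. (The ingredients you do invoke are correct: $\mathrm{Top}(1)\simeq\mathrm{O}(1)$, so an oriented line microbundle is trivial, and $\langle e(\tau_M),[M]\rangle=\chi(M)$ holds for closed oriented topological manifolds.)

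The paper sidesteps all of this with a two-line proof: by Siebenmann's topological transversality theorem \cite{Siebenmann72} a codimension-one $C^0$-foliation on a closed topological manifold admits a transverse dimension-one foliation, and Lemma~\ref{Euler} (Whitney flow plus Lefschetz at dyadic times) then applies to that transverse foliation. If you wish to keep your characteristic-class route you must either prove the microbundle splitting directly or extract it from the $\Gamma^{\mathrm{Top}}_1$-structure formalism; otherwise the transversality shortcut is the recommended fix.
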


\begin{proof} Choose $\mathcal F$ a codimension-one foliation on $M$.
According to Siebenmann \cite[Theorem 6.26, p.\,159]{Siebenmann72} there is a (dimension-one) foliation ${\mathcal F}^{\pitchfork}$ on $M$ transverse to ${\mathcal F}$. The proof is completed by Lemma~\ref{Euler}.
\end{proof}

\begin{rem}
{\rm Note that the converses of Lemmas \ref{Euler} and \ref{Eulercodim} hold when $M$ supports a smooth structure, by results of Hopf \cite{Hopf26} and Thurston \cite{Thurston76}, respectively. Whether these converses extend to topological (closed) manifolds is unclear to the authors as both proofs rely on triangulations. A possible first step towards a positive solution of the Hopf converse might be the construction of a non-singular path field by Brown and Fadell, \cite{BrownFadell}.}
\end{rem}

\begin{prop}\label{prop7.6}
The $n$-ball $\mathbb B^n$ cannot be $C^1$-foliated (neither
tangentially nor transversally and whatever the codimension).
In dimension- or codimension-one the conclusion may be
strengthened with $C^0$ in place of $C^1$.
\end{prop}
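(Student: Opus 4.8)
The plan is to exploit the fact that $\mathbb B^n$ is a compact manifold with connected boundary, and to derive a fixed-point contradiction exactly as in the proof of Lemma \ref{Euler} and Corollary \ref{longglass}. First I would reduce the tangential and transverse cases to one another: a $C^1$-foliation of $\mathbb B^n$ tangent to the boundary of dimension $p$ restricts along $\partial\mathbb B^n=\mathbb S^{n-1}$ to a $C^1$-foliation of the sphere of the same dimension, whereas a transverse foliation of codimension $q$ is complementary (after a standard smoothing, using a $C^1$ Riemannian metric and the orthogonal complement) to a tangential foliation of dimension $q$. So in every case one produces a $C^1$-foliation on $\mathbb B^n$ which is \emph{transverse} to the boundary, of some codimension $k$ with $1\le k\le n-1$; after intersecting with a transverse auxiliary foliation à la Siebenmann one may assume $k=n-1$, i.e. a dimension-one foliation transverse to $\partial\mathbb B^n$. (In the $C^1$ world one may invoke classical transversality/integrability; for the sharpened $C^0$ dimension- or codimension-one statement one quotes Siebenmann \cite[Theorem 6.26]{Siebenmann72} exactly as in Lemma \ref{Eulercodim}.)

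Next I would orient this dimension-one foliation: since $\mathbb B^n$ is simply connected the tangent line field is trivial, so the foliation is given by the orbits of a flow $(f_t)$ by Whitney \cite{Whitney33} (or simply by integrating a nowhere-zero $C^1$ vector field). Along the connected boundary sphere the flow is everywhere transverse, hence by a connectedness (``closed-open set'') argument it points uniformly inward (or uniformly outward; reverse time if needed). Now apply the Brouwer fixed-point theorem: each time-$t$ map $f_t\colon\mathbb B^n\to\mathbb B^n$ is well-defined for $t$ in a short interval $[0,\varepsilon]$ because inwardness at the boundary prevents orbits from leaving in forward time, and $f_t$ is homotopic to the identity, so $\mathrm{Fix}(f_t)\neq\varnothing$; taking dyadic times $t_n=\varepsilon/2^n$ and using compactness of $\mathbb B^n$ gives a point in $\bigcap_n\mathrm{Fix}(f_{t_n})$, which is a rest point of the flow. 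This contradicts the fact that the orbits of the flow are the leaves of the foliation, none of which can degenerate to a point.

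The routine-but-necessary steps are the two reductions at the start (smoothing a transverse foliation to a complementary tangential one, and peeling off codimension using Siebenmann's transverse complement), and the verification that the time-$t$ flow map is globally defined on $\mathbb B^n$ for small $t$ despite $\mathbb B^n$ having boundary — this is where inwardness of the flow along $\partial\mathbb B^n$ is used. The main obstacle I anticipate is precisely handling the boundary behaviour cleanly in the transverse case: one must be careful that ``transverse to the boundary'' really does force the derived vector field to be inward-pointing on all of $\mathbb S^{n-1}$ (not merely nowhere-tangent), which is where connectedness of the sphere is essential; once that is in hand the Brouwer argument is immediate. For the final sentence of the proposition, the improvement to $C^0$ in dimension- or codimension-one is purely a matter of replacing the $C^1$ transversality input by Siebenmann's topological transverse foliation and noting that the Whitney flow argument is already topological.
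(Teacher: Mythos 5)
Your final step --- orienting a dimension-one foliation transverse to the connected boundary sphere, producing an inward-pointing Whitney flow, and deriving a contradiction from Brouwer via dyadic times --- is sound, and it is exactly how the paper disposes of that particular case. The problem is that you never legitimately arrive at that case: both of your reductions fail. First, the orthogonal complement of the tangent distribution of a foliation is a plane field but not, in general, an integrable one; for codimension $q\ge 2$ (or complementary dimension $\ge 2$) the complementary plane field need not be tangent to any foliation (the Hopf fibration on $\mathbb S^3$, whose orthogonal complement is the standard contact structure, is the basic counterexample). So you cannot trade a tangential foliation for a transverse one, or vice versa, by taking orthogonal complements, except in the special situation where the complement is a line field. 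Second, ``intersecting with a transverse auxiliary foliation \`a la Siebenmann'' to force $k=n-1$ is not available: Siebenmann's theorem, as used in Lemma \ref{Eulercodim}, supplies a one-dimensional foliation transverse to a given \emph{codimension-one} foliation; it does not produce complementary foliations in arbitrary codimension, and in any case intersecting a dimension-$(n-k)$ foliation with a complementary dimension-$k$ one would yield points, not curves. The upshot is that all foliations of dimension $2\le p\le n-2$, as well as the tangential codimension-one case, are left uncovered by your argument.

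The paper's proof rests on a tool you never invoke: Steenrod's theorem that an even-dimensional sphere $\mathbb S^m$ carries no field of tangent $p$-planes for $0<p<m$. Applying this to the doubled foliation on $2\mathbb B^n=\mathbb S^n$ when $n$ is even, and to the induced boundary foliation on $\mathbb S^{n-1}$ when $n$ is odd, eliminates every case except two: (a) the boundary sphere is itself a leaf of a codimension-one foliation, which is excluded because leaves of a foliation of the simply connected $\mathbb S^n$ are parallelisable (Ehresmann--Reeb), contradicting $\chi(\mathbb S^{n-1})=2$ for $n$ odd; and (b) a dimension-one foliation transverse to the boundary, the only place where the Whitney/Brouwer argument you describe is actually needed. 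If you wish to keep your flow-based endgame, you must supply an argument of Steenrod's type (or some substitute) to rule out the intermediate dimensions; the reductions you propose do not do this.
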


\begin{proof}
Assume there is a $p$-dimensional foliation ${\mathcal F}$ on the
ball $\mathbb B^n$. We look simultaneously at the induced
boundary-foliation $\partial {\mathcal F}$ and its double $2{\mathcal
F}$, defined respectively on $\partial \mathbb B^n=\mathbb S^{n-1}$ and
$2\mathbb B^n=\mathbb S^n$.
But recall that even-dimensional spheres $\mathbb S^m$ never admits
$C^1$-foliations. Indeed, according to
Steenrod \cite[Theorem 27.18, p.\,144]{Steenrod}, such
spheres do not even support fields of tangent $p$-planes
($0<p<m$). This gives almost the result, but a careful analysis is nevertheless demanded.

$\bullet$ If $n$ is even, then since $\dim(2 {\mathcal F})=p$, we
have by Steenrod $p=0$ or $p=n$, and we are done.

$\bullet$ If $n$ is odd, we concentrate on the boundary-foliation $\partial {\mathcal F}$ on $\mathbb S^{n-1}$ (an even-dimensional sphere). We distinguish two cases according as $\mathcal F$ behaves tangentially or transversally along the boundary $\partial \mathbb B^n$.

{\it First case (tangential behaviour).} Then $\dim(\partial
{\mathcal F})=p$. Again by Steenrod, we have $p=0$ or $p=n-1$. In
the first case we have finished. In the second,
the boundary $\partial \mathbb B^n$ is a leaf of $\mathcal F$. Hence
$\partial \mathbb B^n=\mathbb S^{n-1}$ is a leaf of $(\mathbb S^n,2 {\mathcal F})$. A
well-known result of Ehresmann-Reeb \cite{Ehresmann-Reeb44},
\cite{Ehresmann51}, tell us that the leaf $\mathbb S^{n-1}$ is
parallelisable. [Indeed, removing a point of $\mathbb S^n$ (not on the
`equator' $\mathbb S^{n-1}$) leave us with $U \approx {\mathbb R}^n$
which is contractible. Hence the tangent plane field to the
induced foliation on $U$ is trivial. Since this bundle
restricts to the tangent bundle of $\mathbb S^{n-1}$, the assertion
follows.] In particular $\chi(\mathbb S^{n-1})=0$\footnote{Actually
much more is true: namely $n=2,4,8$ (Bott-Kervaire-Milnor),
but we don't need this deep information here.}. A
contradiction as $n$ is odd.

{\it Second case (transversal behaviour).} Then $\dim(\partial
{\mathcal F})=p-1$. So still by Steenrod, $p-1$ is $0$ or $n-1$.
In the second case, we have finished. In the first case $p=1$,
and so $2 {\mathcal F}$ is an orientable one-dimensional foliation
(orientability comes from the simple-connectivity of $\mathbb S^n$).
According to Whitney \cite{Whitney33} there is a flow $f\colon
{\mathbb R}\times \mathbb S^n \to \mathbb S^n$ whose orbits describe exactly the
leaves of $2 {\mathcal F}$. Due to the transversal behavior of
${\mathcal F}$ at the boundary, the flow $f$ is pointing into
the same hemisphere along $\partial \mathbb B^n=\mathbb S^{n-1}$.
So by restriction, one obtains a semi-flow
$\varphi\colon{\mathbb R}_{\ge 0} \times \mathbb B^n \to
\mathbb B^n$. By the Brouwer fixed-point theorem (conjointly
with the trick of dyadic cascadization, compare proof of
Lemma~\ref{Euler}), there is a rest point for $\varphi$,
which obviously is also at rest for the flow $f$. But this
contradicts the fact that the trajectories of Whitney's flow
are describing exactly the leaves of the foliation. This
establishes the $C^1$ part of the proposition. 

The $C^0$-strengthening follows from the Siebenmann transversality theorem from \cite{Siebenmann72} and already quoted in the proof of Lemma \ref{Eulercodim}.
\end{proof}

{\it Note.} We do not know whether Proposition~\ref{prop7.6}
holds in general for $C^0$-foliations. This cannot be
straightforwardly reduced to the $C^1$-case, due to the
failure of smoothing $C^0$-foliations (consult \cite{Harrison}).

\begin{minipage}[b]{0.5\linewidth} 
Mathieu Baillif

Universit\'e de Gen\`eve

Section de Math\'ematiques

2-4 rue du Li\`evre, CP 64

CH-1211 Gen\`eve 4

Switzerland.

Mathieu.Baillif@unige.ch
\end{minipage}
\hspace{-25mm} 
\begin{minipage}[b]{0.5\linewidth}
Alexandre Gabard

Universit\'e de Gen\`eve

Section de Math\'ematiques

2-4 rue du Li\`evre, CP 64

CH-1211 Gen\`eve 4

Switzerland.

alexandregabard@hotmail.com
\end{minipage}
\hspace{-25mm}
\begin{minipage}[b]{0.5\linewidth}
David Gauld

Department of Mathematics

The University of Auckland

Private Bag 92019

Auckland

New Zealand.

d.gauld@auckland.ac.nz 
\end{minipage}

\end{document}